\spnewtheorem*{spthm}{Main Theorem}{\bf}{\it}
\spnewtheorem{cor}{Corollary}{\bf}{\it}
\spnewtheorem*{sikthm}{Sikorav's existence theorem}{\bf}{\it} 
\spnewtheorem*{vitthm}{Viterbo's uniqueness theorem}{\bf}{\it} 
\spnewtheorem*{Shyp}{Standing hypotheses}{\bf}{\it}
\newenvironment{defi}{\medskip\noindent{\sc
Definition}. }{\goodbreak\medskip}
\newenvironment{defis}{\medskip\noindent{\sc
Definitions}. }{\goodbreak\medskip}
\newenvironment{nota}{\medskip\noindent{\sc
Notation}.}{\goodbreak\medskip}
\newenvironment{remk}{\noindent{\sc
Remark}. }{\goodbreak\vskip10pt}
\newenvironment{remks}{\noindent{\sc
Remarks}. }{\goodbreak\vskip10pt}
\newenvironment{notas}{\medskip\noindent{\sc
Notations}. }{\goodbreak\medskip}
\newenvironment{exas}{\noindent{\sc
Examples}. }{\goodbreak\vskip10pt}
\newenvironment{exa}{\noindent{\sc
Example}. }{\goodbreak\vskip10pt}
\def\cs{{\mathcal S}}
\def\cw{{\mathcal W}}
\def\ce{{\mathcal E}}
\def\cg{{\mathcal G}}
\def\ch{{\mathcal H}}
\def\ck{{\mathcal K}}
\def\cl{{\mathcal L}}
\def\cm{{\mathcal M}}
\def\cw{{\mathcal W}}
\def\cy{{\mathcal Y}}
\def\cu{{\mathcal U}}
\def\cv{{\mathcal V}}
\def\cz{{\mathcal Z}}
\def\ugoth{{\mathfrak u}}
\def\ug{{\mathfrak u}}
\def\R{\mathbb{R}}
\def\Z{\mathbb{Z}}
\def\N{\mathbb{N}}
\def\T{\mathbb{T}}
\def\d{\delta}
\def\x{\xi}
\def\y{\eta}
\def\dim{{\rm dim}\,}
\def\smallskip{\par\vspace{1mm}}
\def\medskip{\par\vspace{2mm}}
\def\bigskip{\par\vspace{3mm}}
\def\thenumber{0}
\def\eq#1{\global\advance\equationcount by 1
   \def\thenumber{\number\equationcount}
                        {$$#1\eqno(\thenumber)$$}}
\tikzset{
xmin/.store in=\xmin, xmin/.default=-1.5, xmin=-1.5,
xmax/.store in=\xmax, xmax/.default=7.5, xmax=7.55,
ymin/.store in=\ymin, ymin/.default=-0.75, ymin=-0.75,
ymax/.store in=\ymax, ymax/.default=3.25, ymax=3.25,
}
\newcommand{\axes} {
\draw[->] (\xmin,0) -- (\xmax,0);
\draw[->] (0,\ymin) -- (0,\ymax);
}
\begin{document}

\title{A multidimensional Birkhoff Theorem 
  for   time-dependent Tonelli Hamiltonians
  \thanks{The authors are supported by ANR-12-BLAN-WKBHJ and by
    MATH-AmSud project SIDIHAM}
%\thanks{supported by MATH-AmSud project SIDIHAM}
}

\titlerunning{Birkhoff Theorem}
\author{Marie-Claude Arnaud$^{\dag}$
\thanks{$\dag$ member of the {\sl Institut universitaire de France.}}  
\and
Andrea Venturelli}

%\thanks{supported by ANR-12-BLAN-WKBHJ}
%\thanks{supported by MATH-AmSud project SIDIHAM}

\institute{M-.C. Arnaud \at Avignon Universit\'e,
  Laboratoire de Math\'ematiques d'Avignon 
  (EA 2151)\\ F-84018  Avignon \\
  \email{marie-claude.arnaud@univ-avignon.fr}
  \and
  A. Venturelli \at Avignon Universit\'e,
  Laboratoire de Math\'ematiques d'Avignon 
  (EA 2151)\\ F-84018  Avignon \\
  \email{andrea.venturelli@univ-avignon.fr} }

\date{Received : April 2016 / Revised version : March 2017}

\maketitle
\date{}

%\thanks{supported by ANR-12-BLAN-WKBHJ}
%\thanks{supported by MATH-AmSud project SIDIHAM}

\begin{abstract}  
Let $M$ be a closed and connected manifold,  
$H:T^*M\times {\mathbb R}/\Z\rightarrow \R$ a Tonelli $1$-periodic Hamiltonian and   
$\cl\subset T^*M$ a Lagrangian submanifold Hamiltonianly isotopic to the zero 
section. We prove that if $\cl$ is invariant by the time-one map of $H$, then 
$\cl$ is  a graph over $M$.\\
An interesting consequence in the autonomous case is that in this case, $\cl$ is invariant by all the time $t$ maps of the Hamiltonian flow of $H$.

\keywords{Lagrangian Dynamics, Weak KAM Theory, Lagrangian submanifolds, 
generating  functions.}

%\subclass[2010]{37J50, 70H20, 53D12}
\subclass{37J50, 70H20, 53D12}
\end{abstract}

\maketitle

\section{Introduction and Main Results.}\label{SecIntro}
A lot of problems coming from the physics are conservative, as the $N$-body problem and other classical mechanical systems: in other words, they are symplectic.\\
Close to the completely elliptic periodic orbits of symplectic dynamics, it is in general possible to use some change of coordinates called normal form (see \cite{Mos}) and thus to be led to study a local diffeomorphism
\begin{equation}\label{normalform}
(\theta, r)\in \T^n\times \R^n\mapsto (\theta+\alpha+\beta.r, r)+{\rm small}
\end{equation}
close to the zero section $\T^n\times \{ 0\}$ with $\beta$ being a symmetric matrix. When $\beta$ is a definite matrix, these diffeomorphisms are called twist maps and it can also be proved  that they are the time 1 map of a so-called Tonelli Hamiltonian vector field.\\
This kind of diffeomorphisms was introduced for example by Poincar\'e in the study of the circular restricted 3-body problem. When $n=1$, they were intensively studied by G.D.~Birkhoff.  
In \cite{Bir1}, G.D. Birkhoff proved that if $\gamma$ is an embedded circle of $\T\times \R$ that is not homotopic to a point and that is invariant by some conservative twist map, then $\gamma$ is the graph of a Lipschitz map $\T\rightarrow \R$.   A modern proof of this result can 
be find in \cite{He1}.
\medskip

{\bf Question:} what happens in higher dimensions?
\medskip

A natural extension of the 1-dimensional annulus $T^*\T=\T\times\R$ is the cotangent bundle $T^*M$ of a closed $n$-dimensional manifold $M$. We recall in section \ref{ssnota} that $T^*M$  can be endowed with a symplectic form.

If we want to obtain some submanifolds that are graphs (or more correctly sections) in $T^*M$, we are led to look at $n$-dimensional  submanifolds. Moreover, we have to impose some topological conditions for these submanifolds. Indeed, there are examples of conservative twist maps of $\T\times \R$ that have an invariant embedded circle that is homotopic to a point (and then this is not a graph): this happens for example for the time 1 map of the rigid pendulum close to the elliptic equilibrium.

But even if we ask that the invariant submanifold is homotopic to the zero-section of $T^*M$, it is easy to build examples of Tonelli  Dynamics that have an invariant submanifold that is not a graph but is homotopic to the zero section. The first author gave in  \cite{Arn1} an example of such a submanifold of $T^*\T^3=\T^3\times \R^3$ that is invariant by the Hamiltonian flow of $H(q,p)=\frac{1}{2}\| p\|^2=\frac{1}{2}( p_1^2+p_2^2+p_3^2)$, which is the geodesic flow for the flat metric on $\T^3$.\\

That is why we focus on the particular case of Lagrangian submanifolds. 

\begin{defi}
A submanifold $\cl\subset T^*M$ is {\sl Lagrangian} if 
$\dim \cl=n$ and $\omega_{|T\cl}=0$.

\end{defi}

Even if the set of Lagrangian submanifolds is very small in the set of all the $n$-dimensional submanifolds (more precisely it has no interior when endowed with the Hausdorff topology in $T(T^*M)$), there exist a lot of  invariant Lagrangian submanifolds   for the symplectic dynamics.

\begin{exas}
\begin{itemize}
\item  In $T^*\T$, a  loop  is always Lagrangian;
\item a vertical fiber $T^*_qM$ is Lagrangian;
\item the zero-section is Lagrangian;
\item more generally,  a $C^1$ graph is Lagrangian iff it is the graph of a   closed  1-form: for example,  $\{ (q, dS(q)); q\in M\}$ is a Lagrangian submanifold;
\item the stable or unstable (immersed) submanifold at a hyperbolic equilibrium is Lagrangian;
\item for the so-called completely integrable systems the phase space is foliated by invariant Lagrangian tori;
\item some of these invariant tori remain after perturbation (K.A.M. theory).
\end{itemize}
\end{exas}

Let us come back to the expression (\ref{normalform}). When $\beta$ is indefinite, M.~Herman constructed in
\cite{He2} some examples with an invariant Lagrangian torus that is isotopic to the zero section but not a graph. That is why we will assume that $\beta$ is positive definite, i.e. we will work with Tonelli Hamlitonians.

\begin{defi}
A $C^2$ function $H:T^*M\times \T\rightarrow \R$ is a {\em Tonelli Hamiltonian} if
\begin{itemize}
\item the Hamiltonian vector field\footnote{This will be defined in section \ref{ssnota}.} associated to $H$ is complete;  
\item $H$ is $C^2$-convex in the fiber direction, i.e. has a positive definite Hessian in the fiber direction;
\item $H$ is superlinear in the fiber direction, i.e. for every $B>0$, there exists $A>0$ such that:
$$\forall (q, p, t)\in T^*M\times \T, \| p\|\geq A\Rightarrow H(q, p, t)\geq B\| p\|.$$
\end{itemize}
\end{defi}

\begin{exa}
A Riemannian metric defines an autonomous Tonelli Hamiltonian.
\end{exa}

Let us recall that  we need to ask some topological condition on the invariant Lagrangian submanifold to be able to prove that it is a graph. To explain that, we need a definition.

\begin{defi}
Two submanifolds $\cl_1$ and $\cl_2$ of $T^*M$ are {\em Hamiltonianly isotopic} if there exists a time-dependent Hamiltonian $H:T^*M\times \R\rightarrow \R$ such that, if $(\phi_H^{s,t})$ is the family of symplectic maps that is generated by the Hamiltonian vector field of $H$, then
$$\cl_2=\phi_H^{0, 1}(\cl_1).$$
\end{defi}

Our main result is the following one.
\begin{spthm}
Let $M$ be a closed manifold, 
let $H:T^*M\times \T\rightarrow \R$ be a Tonelli $1$-time periodic 
Hamiltonian, and let $\cl\subset T^*M$ be a $C^1$ Lagrangian submanifold Hamiltonianly 
isotopic to a Lagrangian graph. If $\cl$ is invariant by the time 
one map associated to $H$,  then $\cl$ is the Lagrangian graph of a $C^1$ closed 1-form. 
\end{spthm}

A submanifold $\cl$ is Hamiltonianly isotopic to the zero-section if and only if the two submanifolds are isotopic in some particular subset $\ce$ of the set of Lagrangian submanifolds of $T^*M$, the set of the so-called {\sl exact Lagrangian submanifolds}.  Hence some related questions remain open.

{\bf Questions}
\begin{itemize}
\item Is the same conclusion true if we replace ``Hamiltonlianly isotopic'' by '' isotopic''?
\item Is the same conclusion true if we replace ``Hamiltonlianly isotopic'' by '' homotopic''?
\end{itemize}

Let us mention some related existing results.
\begin{itemize}
\item In \cite{Arn}, the first author proved that if a Lagrangian submanifold that is Hamiltonianly isotopic to a Lagrangian graph is invariant by a autonomous Tonelli Hamlitonian flow, then it has to be a graph. In next Corollary, we will explain how our result improves this statement.
\item In \cite {Be2}, P.~Bernard and J.~dos Santos extended this result in the autonomous case to the case of Lipschitz Lagrangian submanifolds.
\item In  \cite{BiaPol1} and \cite{BiaPol3}, for $M=\T^n$, with a weaker topological assumption (they assume that $\cl$ is homologous to the zero-section) but a strong hypothesis on the restricted Dynamics (it is assumed to be chain-recurrent), M.~Bialy and L.~Polterovich obtain the same result.
\item In \cite{He3},  M.~Herman proved a similar result for a submanifold that is:
\begin{itemize}
\item compact and Lagrangian;
\item with a Maslov class equal to $0$;
\item invariant by an exact symplectic twist map of $\T^n\times \R^n$ that is $C^1$-close enough to a completely integrable symplectic twist map;
\item such that the restricted dynamics is chain recurrent.
\end{itemize}
\end{itemize}

Our result is valid on all cotangent bundles and doesn't assume any dynamical behaviour of the restriction to the invariant submanifold.\\
However, it cannot be extended to any twist map because we don't know if a general twist map (in any dimension) is the time 1 map of a Tonelli Hamiltonian (see \cite{Gol} for an interesting discussion on this subject).\\
 
  Some arguments of our proof are common with the proof of the autonomous case in \cite{Arn}, but not all. Moreover, even if some of our techniques come from weak KAM theory, we tried to avoid to use the whole theory, as the Aubry set, the weak KAM solutions \dots and to write a self-contained article.

\begin{cor}\label{corauton}

Let $M$ be a closed manifold, 
let $H:T^*M\rightarrow \R$ be an autonomous  Tonelli  
Hamiltonian, and let $\cl\subset T^*M$ be a $C^1$ Lagrangian submanifold Hamiltonianly 
isotopic to a Lagrangian graph. If $\cl$ is invariant by the time 
one map associated to $H$,  then $\cl$ is the Lagrangian graph of a $C^1$ closed 1-form and  it is invariant by all the time $t$ maps $\phi_H^t$. 

\end{cor}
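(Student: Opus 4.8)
The plan is to deduce the Corollary from the Main Theorem by exploiting the time-rescaling symmetry of an autonomous Hamiltonian. First I would observe that an autonomous Tonelli Hamiltonian $H:T^*M\to\R$ may be viewed as a ($1$-periodic) time-dependent Tonelli Hamiltonian in the trivial way; its time-one map is exactly $\phi_H^1$. Since $\cl$ is $C^1$, Lagrangian, Hamiltonianly isotopic to a Lagrangian graph, and invariant by $\phi_H^1$, the Main Theorem applies verbatim and gives that $\cl$ is the graph of a $C^1$ closed $1$-form. This settles the first half of the conclusion.

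For the second half I would use the following rescaling trick. Fix $t_0\in\R$, $t_0\neq 0$; it suffices to show $\phi_H^{t_0}(\cl)=\cl$ for every such $t_0$, since $\phi_H^0=\mathrm{id}$. Consider the rescaled Hamiltonian $H_{t_0}:=t_0\cdot H$. Then $H_{t_0}$ is again an autonomous Tonelli Hamiltonian (convexity and superlinearity in the fiber are preserved under multiplication by a nonzero constant; if $t_0<0$ one has to be a little careful—see the obstacle below), and its Hamiltonian vector field is $X_{H_{t_0}}=t_0X_H$, so its flow satisfies $\phi_{H_{t_0}}^s=\phi_H^{t_0 s}$. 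In particular the time-one map of $H_{t_0}$ is $\phi_H^{t_0}$. Moreover $\cl$ is still invariant under this time-one map: indeed, because $\cl$ is the graph of a closed $1$-form and is invariant by $\phi_H^1$, I would first upgrade $\phi_H^1$-invariance to $\phi_H^t$-invariance for all $t$ — this is the standard fact that a Lagrangian graph invariant by the time-one map of an autonomous flow is invariant by the whole flow, because the restricted dynamics preserves the (exact) symplectic structure and the graph is determined by a cohomology class which varies continuously and takes discrete values along the orbit of the time-one map — wait, this is precisely what we want to prove, so instead I argue directly: apply the Main Theorem to $H_{t_0}$, whose time-one map is $\phi_H^{t_0}$; but to do so I need $\cl$ to be invariant by $\phi_H^{t_0}$, which is again the conclusion. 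To break this circularity I would instead use the Main Theorem only once and then argue by continuity: the set $S=\{t\in\R:\phi_H^t(\cl)=\cl\}$ contains $\Z$, is a closed subgroup of $(\R,+)$, hence is either all of $\R$ or of the form $\tau\Z$ for some $\tau>0$ dividing $1$; in the latter case $\cl$ is invariant by the time-one map of $\tfrac{1}{\tau}H$, which is still autonomous Tonelli, and one obtains no contradiction from the Main Theorem alone — so the genuinely new input must be that $S$ is in fact open.

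To see that $S$ is open (equivalently that $S=\R$), I would use the variational/generating-function description of $\cl$ that underlies the proof of the Main Theorem. Since $\cl$ is the graph of a closed $1$-form, it is an exact Lagrangian graph, say $\cl=\{(q,du(q))\}$ after passing to a suitable cover or twisting the Hamiltonian by the cohomology class of $\cl$; invariance of $\cl$ under $\phi_H^1$ then says precisely that $u$ is a fixed point of the Lax–Oleinik semigroup $T^1$ associated to $H$, i.e. $u$ is a (smooth) weak KAM solution. For an autonomous Hamiltonian the Lax–Oleinik semigroup is genuinely a one-parameter semigroup $(T^t)_{t\ge 0}$, and a smooth fixed point of $T^1$ which is $C^1$ is automatically a fixed point of $T^t$ for all $t\ge0$: indeed $T^t u$ is, for each $t$, the generating function of $\phi_H^t(\cl)$, all these are $C^1$ Lagrangian graphs interpolating between $u=T^0u$ and $u=T^1u$, the map $t\mapsto T^tu$ is continuous, and $T^{t+1}u=T^t(T^1u)=T^tu$, so $t\mapsto T^tu$ is $1$-periodic and continuous; monotonicity properties of the Lax–Oleinik semigroup (it is order-preserving and $T^t(u+c)=T^tu+c$) then force $t\mapsto T^tu$ to be constant. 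I would carry this out rigorously and thereby conclude $\phi_H^t(\cl)=\cl$ for all $t\ge 0$, hence for all $t\in\R$ by taking inverses.

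\medskip

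The main obstacle will be handling negative times $t$ cleanly: the Lax–Oleinik / action-minimizing machinery is naturally one-sided (defined for $t\ge 0$), so the symmetric statement ``$\phi_H^t(\cl)=\cl$ for all $t\in\R$'' cannot be obtained by literally plugging $t<0$ into the semigroup. The clean fix is to notice that $\cl$ invariant under $\phi_H^t$ for all $t\ge 0$ already gives $\cl=\phi_H^{-t}(\phi_H^t(\cl))=\phi_H^{-t}(\cl)$, so negative times come for free once the non-negative case is done. A secondary subtlety is the reduction to the exact case: one must twist $H$ by the cohomology class $[\,\cl\,]\in H^1(M;\R)$ (replacing $H(q,p)$ by $H(q,p+\lambda_q)$ for a closed $1$-form $\lambda$ representing that class), check this preserves the Tonelli property and commutes with the statement, and then work with a genuinely exact Lagrangian graph; this is routine but needs to be stated. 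Modulo these points, the Corollary follows from the Main Theorem together with the semigroup property of the autonomous Lax–Oleinik evolution.
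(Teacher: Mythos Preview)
Your first half---applying the Main Theorem to the autonomous $H$ viewed as trivially $1$-periodic---is correct and is exactly what the paper does.

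For invariance under the whole flow, the paper simply cites Theorem~6.4.1 of Fathi's \emph{Weak KAM} book, which rests on the \emph{convergence} of the Lax--Oleinik semigroup in the autonomous case. You are on the same track, but your sketch has a real gap: you claim that order-preservation of $T^t$ together with $T^t(u+c)=T^tu+c$ forces the continuous $1$-periodic map $t\mapsto T^tu$ to be constant. This does not follow. A semigroup of order-preserving maps on $C(M)$ commuting with constants can have genuine periodic orbits---for instance precomposition with a rotation of $M=\T$ has all these properties and $T^1=\mathrm{id}$, yet $T^{1/2}u\neq u$ for generic $u$. The missing ingredient is precisely Fathi's convergence theorem: $T^tv+ct$ converges uniformly as $t\to\infty$ for every $v$; combined with $T^nu+nc=u$ for all $n\in\N$ (from $\phi_H^1$-invariance of $\cl$), this forces $u$ to be the limit, hence a weak KAM solution, so $T^tu+ct=u$ for all $t\ge0$ and $\phi_H^t(\cl)=\cl$. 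A secondary issue: your assertion that ``$T^tu$ is, for each $t$, the generating function of $\phi_H^t(\cl)$'' presupposes that $\phi_H^t(\cl)$ remains a graph for all $t$, which is part of what you are trying to prove. Finally, the rescaling $H\mapsto t_0H$ and closed-subgroup detours are, as you yourself recognize, circular or inconclusive (and $t_0H$ is not Tonelli for $t_0<0$); drop them and go directly to the Lax--Oleinik argument, citing the convergence theorem rather than trying to improvise it from monotonicity alone.
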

To any Tonelli Hamiltonian $H: T^*M\times\T\rightarrow\R$, a Lagrangian function $L:TM\times\T\rightarrow \R$ can be associated  via the Legendre duality.

$$\forall (q, v, t)\in TM\times \T,\  L(q, v, t)=\inf_{p\in T^*_qM}(p.v-H(q, p, t)).$$
\begin{defi}
A continuous and piecewise $C^1$ arc $\gamma_0: [a, b]\rightarrow M$ is {\em minimizing} if for every continuous and piecewise $C^1$ arc $\gamma: [a, b]\rightarrow M$ such that $\gamma_0(a)=\gamma(a)$ and $\gamma_0(b)=\gamma(b)$, we have
$$\int_a^bL(\gamma_0(t), \dot\gamma_0(t), t)dt\leq \int_a^bL(\gamma(t), \dot\gamma(t), t)dt.$$
\end{defi}

\begin{cor}\label{cormin}
Let $M$ be a closed connected manifold, 
$H:T^*M\times \T\rightarrow \R$ a Tonelli $1$-time periodic 
Hamiltonian, and $\cl\subset T^*M$ a $C^1$ Lagrangian submanifold Hamiltonianly 
isotopic to a  the zero section. If $\cl$ is invariant by the time 
one map associated to $H$,  then the orbit of every point of $\cl$ is minimizing. 
\end{cor}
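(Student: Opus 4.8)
The plan is to promote the conclusion of the Main Theorem into a whole flow-invariant family of Lagrangian graphs, to recognise that family as the graph of the $q$-differential of a calibrated subsolution of a time-dependent Hamilton--Jacobi equation, and then to read off the minimizing property from the usual domination--calibration dichotomy.

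\emph{A flow-invariant family of graphs.} For $t\in\R$ put $\cl_t:=\phi_H^{0,t}(\cl)$ and $H^t(q,p,\tau):=H(q,p,\tau+t)$. Each $H^t$ is again Tonelli and $1$-periodic, $\cl_t$ is still Hamiltonianly isotopic to the zero section (being the image of $\cl$ under the Hamiltonian isotopy $\phi_H^{0,\cdot}$), and, using the $1$-periodicity of $H$, one has $\phi_{H^t}^{0,1}=\phi_H^{t,t+1}$ together with
\[
\phi_H^{t,t+1}(\cl_t)=\phi_H^{0,t+1}(\cl)=\phi_H^{0,t}\big(\phi_H^{0,1}(\cl)\big)=\phi_H^{0,t}(\cl)=\cl_t ,
\]
so $\cl_t$ is invariant by the time-one map of $H^t$. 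Applying the Main Theorem to $(H^t,\cl_t)$ for every $t$ gives that each $\cl_t$ is the graph of a closed $C^1$ $1$-form, which is moreover exact (an exact Lagrangian that is a graph is the graph of $dS$); after fixing the additive constant (for instance by requiring the primitive to have zero mean on $M$) we obtain a function $U\colon M\times\R\to\R$, $1$-periodic in $t$, with $\cl_t=\{(q,\partial_qU(q,t))\}$, and the $C^2$ regularity of $H$ makes $U$ of class $C^1$ (indeed $C^2$ in $q$).

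\emph{The Hamilton--Jacobi equation.} I claim $\partial_tU(q,t)+H(q,\partial_qU(q,t),t)=c(t)$ for some continuous function $c$ of $t$ only. Invariance of the family $\{\cl_t\}$ means that every Hamiltonian orbit issued from a point of some $\cl_{t_0}$ stays in $\bigcup_t\cl_t$. Fix $t_0$ and, for $q$ near a given point, let $A_\varepsilon(q)$ be the action on $[t_0,t_0+\varepsilon]$ of the orbit with initial condition $(q,\partial_qU(q,t_0))$, whose projection to $M$ at time $t_0+\varepsilon$ we call $\Theta_\varepsilon(q)$; since $p=\partial_qU$ holds at both ends of this orbit, the free-endpoint first-variation formula gives $dA_\varepsilon=d\big(U(\cdot,t_0+\varepsilon)\circ\Theta_\varepsilon\big)-dU(\cdot,t_0)$, so $U(\Theta_\varepsilon(q),t_0+\varepsilon)-U(q,t_0)-A_\varepsilon(q)$ is independent of $q$; dividing by $\varepsilon$ and letting $\varepsilon\to0$ identifies this constant with $\partial_tU(q,t_0)+H(q,\partial_qU(q,t_0),t_0)$, which is therefore independent of $q$; connectedness of $M$ closes the step. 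This is the part I expect to be the main obstacle: turning invariance of the merely $C^1$ family $\{\cl_t\}$ into the pointwise Hamilton--Jacobi identity has to be carried out with care because of the limited a priori regularity. An alternative is to run the whole argument with the generating functions of the maps $\phi_H^{s,t}$, in which case the crux becomes instead a semiconcavity argument showing that action-minimizers between two points of $M$ lie on the $\cl_t$'s once one endpoint is pulled back onto $\cl$ by invariance.

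\emph{Domination, calibration, conclusion.} For any piecewise $C^1$ curve $\gamma\colon[a,b]\to M$, the fundamental theorem of calculus and the Fenchel inequality $p\cdot v\le L(q,v,t)+H(q,p,t)$ give
\[
U(\gamma(b),b)-U(\gamma(a),a)=\int_a^b\!\big(\partial_qU\cdot\dot\gamma+\partial_tU\big)\,dt\le\int_a^b\!L(\gamma,\dot\gamma,t)\,dt+\int_a^b\!c(t)\,dt ,
\]
with equality exactly when $\dot\gamma(t)=\partial_pH(\gamma(t),\partial_qU(\gamma(t),t),t)$ a.e., i.e.\ when $t\mapsto(\gamma(t),\partial_qU(\gamma(t),t))$ is a Hamiltonian orbit; by invariance of $\{\cl_t\}$ this happens precisely when $\gamma$ is the projection of the orbit of a point of $\cl$. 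In particular, for $x\in\cl$ and $\gamma_0(t)=\pi\,\phi_H^{0,t}(x)$ equality holds, so if $\gamma\colon[a,b]\to M$ satisfies $\gamma(a)=\gamma_0(a)$ and $\gamma(b)=\gamma_0(b)$ then
\[
\int_a^b\!L(\gamma_0,\dot\gamma_0,t)\,dt+\int_a^b\!c=U(\gamma_0(b),b)-U(\gamma_0(a),a)=U(\gamma(b),b)-U(\gamma(a),a)\le\int_a^b\!L(\gamma,\dot\gamma,t)\,dt+\int_a^b\!c ,
\]
whence $\int_a^bL(\gamma_0,\dot\gamma_0,t)\,dt\le\int_a^bL(\gamma,\dot\gamma,t)\,dt$; thus $\gamma_0$ is minimizing on every interval, which is the assertion of the corollary.
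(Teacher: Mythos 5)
Your final step (domination plus calibration along the orbit forces the orbit to minimize) is exactly the argument the paper uses: its proof of Corollary~\ref{cormin} is the one--line observation that every $\ch$--orbit issued from $\cy$ is $(\ug,L,c)$--calibrated (Proposition~\ref{calibri}), and a calibrated curve of a dominated function beats every competitor with the same endpoints. Where you diverge is in how you manufacture the dominated function. The paper never tries to differentiate in $t$ a family of primitives of the slices $\cl_t$: it builds a Lipschitz function $\ug$ directly from a graph selector of the suspended Lagrangian $\cg$, proves the domination inequality in integrated form (Section~3), and obtains calibration of the orbits through the $\Omega$--limit argument of Propositions~\ref{POmega} and~\ref{calibri}. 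You instead apply the Main Theorem to every time slice $(H^t,\cl_t)$ and try to assemble the resulting potentials into a classical solution $U$ of $\partial_tU+H(q,\partial_qU,t)=c(t)$.

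The step you flag is indeed a genuine gap, not just a technicality. The Main Theorem gives you, for each fixed $t$, a $C^1$ one--form on $M$, hence a function $U(\cdot,t)$ that is $C^2$ in $q$; but nothing yet guarantees that $\partial_tU$ exists at even a single point, and your passage ``dividing by $\varepsilon$ and letting $\varepsilon\to0$'' presupposes exactly that. Your first--variation identity only shows that $k(\varepsilon):=U(\Theta_\varepsilon(q),t_0+\varepsilon)-U(q,t_0)-A_\varepsilon(q)$ is independent of $q$; the existence of $\lim_{\varepsilon\to 0}k(\varepsilon)/\varepsilon$ is equivalent to the differentiability you are trying to prove. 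Moreover, your ``zero mean'' normalization is not the one under which the constant comes out clean. The way to close the gap in the spirit of your argument is the one the paper takes in Proposition~\ref{PLiouvilleprim} and Corollary~\ref{identit-S}: normalize the potentials by transporting a primitive of the Liouville form along the flow (equivalently, set $U(\pi\varphi_H^{0,t}(x),t)=S_0(\pi(x))+\int_0^tL$ for $x\in\cl$), which builds the $t$--regularity and the Hamilton--Jacobi identity $\ch(J_\ug)=c$ into the definition instead of trying to extract them afterwards from the merely $C^1$ family $\{\cl_t\}$; the periodicity defect of this transported primitive is precisely the constant $c$ of Proposition~\ref{ident-u}. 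With that replacement (or simply by invoking the paper's $\ug$ and $\cs$, which are exactly the objects you are trying to rebuild), your proof goes through.
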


\subsection{Notations}\label{ssnota}
 \begin{itemize}
\item $M$ is a closed Riemannian manifold, $\pi:T^*M\rightarrow M$ is its cotangent bundle and $\cz_{T^*M}$ the zero section; if $q=(q_1, \dots , q_n)$ are coordinates in a chart of $M$, the dual coordinates $p=(p_1, \dots, p_n)\in T_q^*M$ are defined by $p_i(\delta q_j)=\delta_{i,j}$ where $\delta q_i$ is the $i$th vector of the canonical basis and $\delta_{i, j}$ is the Kronecker symbol;
\item $T^*M$  is endowed with the Liouville 1-form that is defined by:
$$\forall p\in T^*M, \forall v\in T_p(T^*M), \lambda(v)=p\circ D\pi(p)(v);$$
in a dual chart, we have $\lambda=<p,dq>=\sum p_idq_i$;
\item the canonical symplectic form on $T^*M$ is $\omega=-d\lambda$; in a dual chart we have $\omega=dq\wedge dp=\sum dq_i\wedge dp_i$;
\item $\T=\R/\Z$ is the 1-dimensional torus with length 1 and $\T_2=\R/2\Z$ is  the 1-dimensional torus with length 2; we denote by $t\in\R\mapsto [t]_1\in \T$ and $t\in\R\mapsto [t]_2\in\T_2$ the corresponding covering maps;
\item a complete $C^2$ Hamiltonian $H:\T^*M\times\T\rightarrow \R$ being given, the Hamiltonian vector field $X_H$ is defined by $\omega (X_H(x,s), \delta x)=dH(x, s)\delta x$ and the corresponding Hamiltonian familly of diffeomorphisms is denoted by $(\varphi_H^{s,t})_{s, t\in \R}$.
\item we choose coordinates $(q, \tau)$ in the closed manifold $\cm_2=M\times \T_2$ and denote the dual coordinates by $(p, E)$; then the Liouville 1-form on $T^*\cm_2$ is $\Theta=<p,dq>+Ed\tau$ and the canonical symplectic form is $\Omega=-d\Theta=dq\wedge dp+d\tau\wedge dE$. We will often use the identification $T^*\cm_2=T^*M\times\T_2\times\R$;
\item we choose similarly coordinates $(q, \tau)$ in the closed manifold $\cm_1=M\times \T$ and denote the dual coordinates by $(p, E)$; then the Liouville 1-form on $T^*\cm_1$ is $\theta=<p,dq>+Ed\tau$. We will often use the identification $T^*\cm_1=T^*M\times\T\times\R$.
\item given a function $v : \cm_1 \rightarrow \R$
  (resp. $v: \cm_2 \rightarrow \R$) and a point
  $z=(q,t)\in \cm_1$ (resp. $z=(q,t)\in \cm_2$), if $v$ is differentiable on
  $z$ we set
  $$
  J_v(z)=(q,d_q v(z),t, \frac{\partial v}{\partial t}(z)).
  $$
  It is an element of $T^*M\times \T\times\R$ (resp. $T^*M\times \T_2\times\R$)
  and it can be identified with the differential of $v$ at $z=(q,t)$.
\end{itemize}
 
\subsection{A  useful reduction}

Let us explain why we will assume that $\cl$ is Hamiltonianly isotopic to the zero section (instead of ``to a Lagrangian graph'') in the proof.\\
 Assume that $\cl$ is Hamiltonianly isotopic to the Lagrangian graph $\cl_0$ and that $\cl$ is invariant by the time 1 map of the Tonelli Hamiltonian $H:T^*M\times \T\rightarrow \R$.\\
Then $\cl_0$ is the graph of some closed 1-form $\Lambda$ of $M$. Changing $\cl_0$ in a very close other graph, we can even assume that $\Lambda$ is smooth. Then $F: T^*M\rightarrow T^*M$ that is defined by $F(q, p)=(q, p+\Lambda(q))$ is a symplectic diffeomorphism so that
\begin{itemize}
\item $F^{-1}(\cl)$ is Hamiltonianly isotopic to the zero section $F^{-1}(\cl_0)=\cz_{T^*M}$;
\item $F^{-1}(\cl)$ is invariant by the time 1 map of the Tonelli Hamiltonian $\tilde H(q, p, t)= H( F(q, p), t)$.
\end{itemize}
Hence if we have proved the main theorem for the submanifolds that are Hamiltonianly isotopic to the zero section, we deduce that $F^{-1}(\cl)$ and then $\cl$ is a graph.

\subsection{Structure of the article}
\begin{itemize}
\item In the second section, we construct an extended autonomous Hamiltonian and an extended Lagrangian submanifold  in the extended phase space; we then build a graph selector for the extended Lagrangian submanifold;
\item in section 3, using the graph selector that was built in section 2, we build a dominated function;
\item in section 4, using the notion of calibrated curve, we prove the main theorem and its corollaries.
\end{itemize}

\section{Construction of a Lagrangian submanifold in the extended phase space and its graph selector.}
\label{extension}
\begin{Shyp}
{\rm From now, we assume that $H:T^*M\times \R/\Z\rightarrow \R$ is a Tonelli time $1$-periodic 
Hamiltonian and that $\cl\subset T^*M$ is a $C^1$ Lagrangian submanifold Hamiltonianly 
isotopic to $\cz_{T^*M}$ that is  invariant by the time 
one map associated to $H$.}
\end{Shyp}

The goal of this section is to build an extended autonomous Hamiltonian, an extended Lagrangian submanifold and a so-called graph selector. Moreover, we will prove some properties for these objects.

\subsection{Extension of the Lagrangian submanifold}
\vskip 1 mm
In this section, adding two dimensions to the phase space, we will replace the non-autonomous Hamiltonian flow by an autonomous one and extend the invariant submanifold in the new phase space. 

Let us comment on the choice of the new Hamiltonian and of the Lagrangian submanifold. The method that gives an autonomous Hamiltonian  is well-known but the extended Hamiltonian is not Tonelli with respect to the new variables and we cannot just apply the proof that the first author gave in \cite{Arn} in the autonomous case. The method to build an  extended  submanifold in the new phase-space is well-known too, but:
\begin{itemize}
\item a priori, this new submanifold has a boundary; thus we would need to build a theory of generating functions for manifolds with boundary to go on with our proof and we prefer to avoid this. Moreover, we don't know if this could work;
\item an idea to remove the problem of boundary is to identify what happens for the times $t=0$ and $t=1$. As the initial manifold is invariant, we can glue the two ends of the extended submanifold in a smooth way and obtain a closed manifold. Then a new problem appears: we cannot extend the isotopy that joins the zero section to the initial manifold in a periodic way because the submanifolds that appear in the isotopy are not invariant by the initial time 1 map and thus their extended submanifolds cannot be glued in a continuous way.
\end{itemize}
To overcome these problems, we had the idea to extend the Hamiltonian flow in a 2-periodic one by symmetrizing the extended Hamiltonian and   the extended submanifold. Let us explain this now.

We use the following function.
\begin{nota} {\rm
Let $\eta : \R\rightarrow \R$ be a non-negative and $C^\infty$ fonction satisfying
the following properties 
\begin{itemize}
\item[i)] $\eta(-t)=\eta(t)$ and $\eta(1-t)=\eta(1+t)$ for every $t\in\R$. 
\item[ii)] $\eta(0)=0$ and $\eta(1)=1$. 
\item[iii)] $\dot{\eta}(t)>0$ if $t\in (0,1)$.
\item[iv)] $\dot{\eta}(0)=\ddot{\eta}(0)=\dot{\eta}(1)=\ddot{\eta}(1)=0$.
\end{itemize}  

}\end{nota}
%\begin{center}
%\includegraphics[width=14cm]{glopquater.eps}
%\end{center}
    \vspace{3mm} 

\vspace{2mm} 

%\begin{center}
%\begin{tikzpicture} [samples=400]
%\axes
%\draw [scale=3] [domain=-1:2.5] plot(\x,{(1-cos(180*(1-cos(180*\x))/2))/2});
%\draw [scale=3] (2.5,0) node[below]{$t$} ;
%\draw [scale=3] (0,1.5) node[left]{$\eta(t)$} ;
%\draw [scale=3] (0,0) node[below left]{$0$} ;
%\draw [scale=3] (1,0) node[below]{$1$} ;
%\draw [scale=3] (2,0) node[below]{$2$} ;
%\draw [scale=3] (0,1) node[left]{$1$} ;
%\end{tikzpicture}
%\end{center}

\begin{center}
\begin{tikzpicture} [samples=400]
\axes
\draw [scale=3] [domain=-1:2.5] plot(\x,{(1-cos(180*\x))/2});
\draw [scale=3] (2.5,0) node[below]{$t$} ;
\draw [scale=3] (0,1.5) node[left]{$\eta(t)$} ;
\draw [scale=3] (0,0) node[below left]{$0$} ;
\draw [scale=3] (1,0) node[below]{$1$} ;
\draw [scale=3] (2,0) node[below]{$2$} ;
\draw [scale=3] (0,1) node[left]{$1$} ;
\end{tikzpicture}
\end{center}

Let us introduce a new time-dependent Hamiltonian $K : T^*M\times \T_2\rightarrow \R$ defined 
by $K(q,p,t)=\dot{\eta}(t) H(q,p,\eta(t))$.
A straightforward  computation shows that 
every integral curve of $X_K$  can be written as 
$t\mapsto (q,p)(t)=(Q,P)(\eta(t))$,  where $s\mapsto (Q,P)(s)$ is an 
integral curve  of $X_H$. This fact can be expressed as follows :
$$
\varphi_K^{s,t}(q,p)=\varphi_H^{\eta(s),\eta(t)}(q,p).
$$
Roughly speaking, integral curves of $X_K$ are reparametrizations of segments of integral 
curves of $X_H$, but they slow down and turn back at integer time.  
In particular, integral curves of $X_K$ are all 
$2$-periodic and satisfy $(q,p)(-t)=(q,p)(t)$ and $(q,p)(1-t)=(q,p)(1+t)$.

Let now $\ck : T^*M\times \T_2\times\R\rightarrow \R$ be the autonomous 
Hamiltonian on the extended phase space 
$T^*\cm_2=T^*M\times\T_2\times\R$ defined 
by
$$
\ck(q,p,\tau,E)=K(q,p,\tau)+E.
$$ 
The Hamiltonian equations for  $\ck$ are given by 
\begin{equation} \label{champ_hamiltonien_ck}
\left\{
\begin{array}{rlrl}
\frac{dq}{dt}&=\frac{\partial K}{\partial p}(q,p,\tau), \qquad 
&\frac{d\tau}{dt}&=1, \\
\frac{dp}{dt}&=-\frac{\partial K}{\partial q}(q,p,\tau), \qquad
&\frac{dE}{dt}&=-\frac{\partial K}{\partial \tau}(q,p,\tau),
\end{array}
\right.
\end{equation}
We can see that the evolution on $T^*M$ is the same as for the Hamiltonian $K$, 
while the variable $\tau$ is essentially the time.  
If $\phi_\ck^t$ denotes the Hamiltonian flow of $\ck$, by 
(\ref{champ_hamiltonien_ck}), we 
easily get the relation 
\begin{equation} \label{flow_ck}
\phi_\ck^t(q,p,\tau,E)=(\varphi_K^{\tau,\tau+t}(q,p),\tau+t,
E+K(q,p,\tau)-K(\varphi_K^{\tau,\tau+t}(q,p),\tau+t)). 
\end{equation}
The evolution of the variable $E$ can be obtained using the conservation 
of $\ck$ by the flow.
Let $\cg\subset T^*M\times \T_2\times\R$ be the submanifold defined by 
$$
\cg=\{ \phi_\ck^t (q,p,0,-K(q,p,0)), (q,p)\in \cl,  \ t\in [0,2]\} .
$$
Since the flow $\phi_\ck^t$ is $2$-periodic in time, $\cg$ is a 
closed submanifold  diffeomorphic to $\cl\times \T_2$.
If we cut $\cg$ by a hypersurface $\tau=t$ and we forget the $E$ variable, 
we get the image of $\cl$ by $\varphi_K^{0,t}=\varphi_H^{0,\eta(t)}$. 
\begin{proposition}
If $\cl$ is Hamiltonianly isotopic to the zero section $\cz_{T^*M}$ of $T^*M$, then
$\cg$ is Hamiltonianly isotopic to the zero section $\cz_{T^*\cm_2}$ of $T^*\cm_2$. 
\end{proposition}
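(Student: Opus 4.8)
The plan is to connect $\cg$ to $\cz_{T^*\cm_2}$ by a continuous path of \emph{exact} Lagrangian submanifolds of $T^*\cm_2$, and then to apply the characterization recalled in the introduction, by which being Hamiltonianly isotopic to the zero section is equivalent to being isotopic to it inside the set of exact Lagrangian submanifolds. The path is built in two halves, glued at $\cl\times\T_2\times\{0\}$: on one half we use the given isotopy of $\cl$ to $\cz_{T^*M}$, on the other we deform the time-dependent part $K$ of $\ck$ down to $0$.

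\emph{The easy half.} Choose a Hamiltonian $G$ with $\cl=\varphi_G^{0,1}(\cz_{T^*M})$ and set $\hat G(q,p,\tau,E,s)=G(q,p,s)$. As $\hat G$ is independent of $\tau$ and $E$, the flow $\varphi_{\hat G}^{0,s}$ acts only on the $T^*M$ factor, by $\varphi_G^{0,s}$. For $s\in[-1,0]$ put $\cg_s:=\varphi_G^{0,1+s}(\cz_{T^*M})\times\T_2\times\{0\}$, so that $\cg_{-1}=\cz_{T^*\cm_2}$ and $\cg_0=\cl\times\T_2\times\{0\}$; each $\cg_s$ is a Lagrangian submanifold of $T^*\cm_2=T^*M\times T^*\T_2$, and it is exact because $\varphi_G^{0,1+s}(\cz_{T^*M})$ is the image of the exact submanifold $\cz_{T^*M}$ under a Hamiltonian symplectomorphism, while $\cz_{T^*\T_2}$ is exact in $T^*\T_2$.

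\emph{The hard half.} For $s\in[0,1]$ let $K_s(q,p,\tau)=s\,\dot\eta(\tau)\,H\big(q,p,s\,\eta(\tau)\big)$ and $\ck_s=K_s+E$, so $\ck_1=\ck$ and $\ck_0=E$. Just as for $K$, every integral curve of $X_{K_s}$ is a reparametrization $t\mapsto\Gamma(s\,\eta(t))$ of an integral curve $\Gamma$ of $X_H$; since $s\,\eta$ is $2$-periodic and $X_H$ is complete, $X_{K_s}$ is complete with $2$-periodic flow and $\varphi_{K_s}^{0,2}=\mathrm{id}$. Because $\dot\eta(0)=0$ one has $K_s(\cdot,\cdot,0)\equiv 0$, hence $\ck_s(q,p,0,0)=0$, so, exactly as in the definition of $\cg$, the map $(q,p,t)\mapsto\phi_{\ck_s}^t(q,p,0,0)$ descends to an embedding of $\cl\times\T_2$ with image a closed submanifold $\cg_s$, and $\cg_1=\cg$, $\cg_0=\cl\times\T_2\times\{0\}$. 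To see $\cg_s$ is exact, write its parametrization as $\Xi_s=\phi_{\ck_s}\circ(\iota\times\mathrm{id})$ with $\iota(q,p)=(q,p,0,0)$, and use the classical identity $(\phi_{\ck_s}^t)^*\Theta=\Theta+d g^t_s$, where $g^t_s=\int_0^t(\phi_{\ck_s}^\sigma)^*\big(\Theta(X_{\ck_s})-\ck_s\big)\,d\sigma$, together with the invariance of $\ck_s$ along its own flow. Restricting to $\cl\times\T_2$ this yields
$$\Xi_s^*\Theta=\lambda_{|\cl}+d\big(g^t_s\circ(\iota\times\mathrm{id})\big)+(\ck_s\circ\iota)\,dt,$$
the first and third summands understood as pullbacks from $\cl$, and $\lambda$ being the Liouville form of $T^*M$. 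The third summand is zero because $\ck_s\circ\iota\equiv 0$, and $\lambda_{|\cl}$ is exact because $\cl$, being Hamiltonianly isotopic to $\cz_{T^*M}$, has vanishing Liouville class; hence $\Xi_s^*\Theta$ is exact and $\cg_s$ is an exact Lagrangian submanifold of $T^*\cm_2$.

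Gluing the two halves at $s=0$ produces a continuous path $(\cg_s)_{s\in[-1,1]}$ of exact Lagrangian submanifolds from $\cz_{T^*\cm_2}$ to $\cg$, which proves the proposition. I expect the hard half to be the main obstacle: one must choose the interpolation $K_s$ so that $2$-periodicity of the flow, completeness, and the normalisation $K_s(\cdot,\cdot,0)=0$ all persist for every $s$ — this is why $K_s$ is defined from the reparametrized Hamiltonian $H(q,p,s\,\eta(\tau))$ rather than from $sH$ — and then to identify the three contributions to $\Xi_s^*\Theta$: the action term $g^t_s$, the boundary term $(\ck_s\circ\iota)\,dt$ which disappears precisely because $\dot\eta(0)=0$, and the term $\lambda_{|\cl}$ whose exactness is exactly the content of the hypothesis on $\cl$.
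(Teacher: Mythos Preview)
Your argument is essentially correct and follows a genuinely different route from the paper. The paper never invokes the characterization ``Hamiltonianly isotopic to the zero section $\Leftrightarrow$ isotopic through exact Lagrangians''; instead it builds two explicit \emph{ambient} isotopies of $T^*\cm_2$. First it carries $\cg$ to the extension $\cg_0$ of $\cz_{T^*M}$ by the flow of $\ck$, via the Hamiltonian isotopy $F_s(q,p,\tau,E)=(\varphi_K^{0,\tau}\psi_s\varphi_K^{\tau,0}(q,p),\tau,\dots)$; then it carries $\cg_0$ to $\cz_{T^*\cm_2}$ by the family $G_s(q,p,\tau,E)=(\varphi_{sK}^{0,\tau}(q,p),\tau,\dots)$, which it checks is an isotopy of \emph{exact symplectomorphisms} and hence Hamiltonian. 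So both proofs are two-step, but the intermediate submanifold differs (their $\cg_0$ versus your $\cl\times\T_2\times\{0\}$), the interpolations differ ($sK$ versus your $K_s(q,p,\tau)=s\dot\eta(\tau)H(q,p,s\eta(\tau))$), and the logical mechanism differs (ambient exact-symplectic isotopy versus path of exact Lagrangians). Your approach is lighter in that it avoids manipulating diffeomorphisms of the whole ambient space; theirs is more self-contained in that it does not appeal to the equivalence stated in the introduction.

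There is one point you should make explicit. In the displayed identity
\[
\Xi_s^*\Theta=\lambda_{|\cl}+d\big(g^t_s\circ(\iota\times\mathrm{id})\big)+(\ck_s\circ\iota)\,dt,
\]
the function $(q,p,t)\mapsto g^t_s(\iota(q,p))$ is a priori defined on $\cl\times\R$, not on $\cl\times\T_2$; for the conclusion ``$\Xi_s^*\Theta$ is exact on the closed manifold $\cl\times\T_2$'' you need this function to be $2$-periodic in $t$, i.e.\ $g^2_s(\iota(q,p))=0$. This does hold: since $\Theta(X_{\ck_s})-\ck_s=p\cdot\partial_pK_s-K_s$ and the orbit through $\iota(q,p)$ is $t\mapsto(Q,P)(s\eta(t))$ for an $X_H$-trajectory $(Q,P)$, one finds
\[
g^2_s(\iota(q,p))=\int_0^2 s\dot\eta(t)\,L\big(Q(s\eta(t)),\dot Q(s\eta(t)),s\eta(t)\big)\,dt,
\]
and the substitution $\sigma=s\eta(t)$ makes this vanish because $s\eta$ traverses $[0,s]$ and then retraces it. With this check added, your proof is complete.
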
 
\noindent 
\begin{proof} 
The proof is twofold. In the first part, we prove that $\cg$ is isotopic to some submanifold $\cg_0$   by using the fact that $\cl$ and  $\cz_{T^*M}$  are isotopic. In the second part, we prove that $\cg_0$ is isotopic to $\cz_{T^*\cm_2}$ by using the time-dependent Hamiltonian $s\mapsto sK$.

Let  $(\psi_s)$ be a Hamiltonian isotopy of $T^*M$ such that $\psi_0={\rm Id}_{T^*M}$ and $\psi_1(\cz_{T^*M})=\cl$. We use the notation $\cl_s=\psi_s(\cz_{T^*M})$. We denote by $h(q, p, s)$ a Hamiltonian associated to $(\psi_s)$.\\
For every $s\in[0,1]$, we define the submanifold $\cg_s$ of $T^*\cm_2=T^*M\times \T_2\times \R$ by
$$\cg_s=\{ \phi_\ck^t (q,p,0,-K(q,p,0)), (q,p)\in \cl_s,  \ t\in [0,2]\}.
$$
Exactly for the same reason as $\cg$, $\cg_s$ is a closed manifold that is diffeomorphic to $\cl\times \T_2$.  

Because $t\mapsto \varphi_K^{0, t}$ is 2-periodic, we can use this notation for $t\in \T_2$ too.\\
We define $F_s: T^*\cm_2\rightarrow T^*\cm_2$ by 
$$F_s(q, p, \tau, E)=(\varphi_K^{0, \tau}\circ \psi_s\circ \varphi_K^{\tau, 0}(q, p), \tau, E+K(q, p, 0)-K(\varphi_K^{0, \tau}\circ \psi_s\circ \varphi_K^{\tau, 0}(q,p), \tau)).$$
Note that $F_0={\rm Id}_{T^*\cm_2}$ and that $(F_s)$ is the Hamiltonian isotopy associated to the Hamiltonian $(q, p, \tau, E)\mapsto h(\varphi_K^{\tau, 0}(q, p), s)$.

As $F_s(\cg_0)=\cg_s$ and $\cg_1=\cg$,  $\cg$ is Hamiltonianly isotopic to $\cg_0$.

Let us now prove that $\cg_0$ is Hamiltonianly isotopic to the zero section $\cz_{T^*\cm_2}$ of $T^*\cm_2$.
\begin{defi}
A diffeomorphism $G:T^*M\rightarrow T^*M$ is {\em exact symplectic} if $G^*\lambda-\lambda$ is exact as a 1-form.
\end{defi}
\begin{lemma}\label{L1}
Let $(G_s)$ be an isotopy of exact symplectic diffeomorphisms. Then it is a Hamiltonian isotopy.
\end{lemma}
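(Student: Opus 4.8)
The plan is to recover a generating Hamiltonian directly from the isotopy. Assume (as is implicit in the word ``isotopy'', and in any case up to replacing $G_s$ by $G_s\circ G_0^{-1}$, which is again exact symplectic) that $G_0=\mathrm{Id}_{T^*M}$, and let $X_s$ be the time-dependent vector field generating $(G_s)$, i.e. $\frac{d}{ds}G_s=X_s\circ G_s$. By definition of a Hamiltonian isotopy it suffices to produce a smooth function $h:T^*M\times[0,1]\to\R$ with $\omega(X_s,\cdot)=d\big(h(\cdot,s)\big)$ for every $s$; then $(G_s)$ is the flow of the time-dependent Hamiltonian vector field $X_h$, hence a Hamiltonian isotopy.

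The construction of $h$ uses the exactness hypothesis as follows. For each $s$ write $G_s^*\lambda-\lambda=dS_s$; since $T^*M$ is connected, two primitives of a fixed exact form differ by a constant, so $S_s$ becomes unique once we impose $S_s(x_0)=0$ at a fixed base point $x_0$, and then $S_s(x)=\int_\gamma(G_s^*\lambda-\lambda)$ over any path $\gamma$ from $x_0$ to $x$ (path-independent because the integrand is exact), which exhibits the smooth dependence of $(s,x)\mapsto S_s(x)$. Differentiating $G_s^*\lambda-\lambda=dS_s$ with respect to $s$, and using the standard transport formula together with Cartan's formula and $\omega=-d\lambda$, namely $\frac{d}{ds}(G_s^*\lambda)=G_s^*\big(d(\lambda(X_s))-\omega(X_s,\cdot)\big)$, one gets $G_s^*\big(d(\lambda(X_s))-\omega(X_s,\cdot)\big)=d\dot S_s$, hence, after pulling back by $G_s^{-1}$,
\[
\omega(X_s,\cdot)=d\big(\lambda(X_s)\big)-d\big(\dot S_s\circ G_s^{-1}\big)=d\Big(\lambda(X_s)-\dot S_s\circ G_s^{-1}\Big).
\]
Thus $h(\cdot,s):=\lambda(X_s)-\dot S_s\circ G_s^{-1}$, which is smooth in $(x,s)$, satisfies $\omega(X_{h(\cdot,s)},\cdot)=\omega(X_s,\cdot)$, so $X_{h(\cdot,s)}=X_s$ by non-degeneracy of $\omega$, as required.

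The computation itself is routine; the only genuine point is the existence and smooth $s$-dependence of the primitives $S_s$, which is precisely where ``exact symplectic'' is used — for a merely symplectic isotopy the same computation only yields that the one-form $\omega(X_s,\cdot)$ is closed, not exact — and where connectedness of $T^*M$ enters. Once $h$ is constructed, $(G_s)$ is by definition the flow of $X_h$, which completes the argument.
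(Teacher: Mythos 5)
Your proof is correct and follows essentially the same route as the paper's: differentiate the exactness relation $G_s^*\lambda-\lambda=dS_s$ in $s$, apply Cartan's formula to $L_{X_s}\lambda$, and read off the Hamiltonian $i_{X_s}\lambda-\dot S_s\circ G_s^{-1}$. The extra care you take with the smooth $s$-dependence of the primitives $S_s$ is a detail the paper leaves implicit, but it does not change the argument.
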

\begin{proof}
If we denote the Liouville 1-form on $T^*\cm_2$ by $\Theta$, we have: $G_s^*\Theta-\Theta=dS_s$ and then if $X_s$ is the vector field associated to $(G_s)$, we have: $G_s^*(L_{X_s}\Theta)=d\dot S_s$, i.e. $G_s^*(i_{X_s}d\Theta)+G_s^*(d(i_{X_s}\Theta))=d\dot S_s$. We finally obtain $G_s^*(i_{X_s}\Omega)=d(G_s^*(i_{X_s}\Theta)-\dot S_s)$. Hence $(G_s)$ is Hamiltonian and the associated Hamiltonian is $i_{X_s}\Theta-\dot S_s\circ G_s^{-1}$.
\end{proof}
In order to use Lemma \ref{L1}, we define $G_s: T^*\cm_2\rightarrow T^*\cm_2$ by:
$$G_s(q, p,\tau, E)=(\varphi_{sK}^{0, \tau}(q, p), \tau, E+s(K(q, p, 0)-K(\varphi_{sK}^{0, \tau}(q, p), \tau))).$$
Then $G_0=Id_{ T^*\cm_2}$ and $G_1(\cz_{ T^*\cm_2})=\cg_0$. If we succeed in proving that every $G_s$ is exact symplectic, we can deduce that $\cg_0$ is Hamiltonianly isotopic to the zero section and hence that $\cg$ is isotopic to the zero section. We prove that only for $s=1$ (we can replace $K$ by $sK$). We use the notations $G=G_1$, $\varphi_\tau=\varphi_K^{0, \tau}$ and $K_\tau(q, p)=K(q, p, \tau)$.

As $(\varphi_\tau)$ is a Hamiltonian isotopy, every $\varphi_\tau$ is exact symplectic. We write:
$\varphi_\tau^*\lambda-\lambda=dS_\tau$. We saw in the proof of Lemma \ref{L1}  that
$d(i_{X_K}\lambda-K_\tau)=d(\dot S_\tau\circ \varphi_\tau^{-1})$. By adding to $S_\tau$
a function depending only on $\tau$, we can assume that 
$i_{X_K}\lambda-K_\tau=\dot S_\tau\circ \varphi_\tau^{-1}$.

Now we compute
$$G^*\Theta-\Theta=\varphi_\tau^*(\lambda)+\varphi_\tau^*(i_{X_K}\lambda)d\tau -\lambda+(K(q, p, 0)-K(\varphi_{\tau}(q, p), \tau))d\tau.$$
Note that $K(q, p, 0)=0$. We obtain then
$$G^*\Theta-\Theta=dS_\tau+\varphi_\tau^*(i_{X_K}\lambda)d\tau  -K(\varphi_{\tau}(q, p), \tau)d\tau,$$
i.e. $G^*\Theta-\Theta=dS_\tau+\dot S_\tau d\tau$, then $G$ is exact symplectic.
\end{proof}
\subsection{The generating function }\label{ssgenerating}

There is a classical way of quantifying the Lagrangian submanifolds of a cotangent bundle that are Hamiltonianly isotopic to the zero section.
This is done by using the so-called generating functions.

The facts that we recall here come from different articles; more precisely, the existence theorem can be found in  \cite{Sik} and \cite{Bru} and the uniqueness
theorem is proved in \cite{Vi1} and \cite{Th}.

\begin{defis}  \begin{itemize}
\item Let $p:E\rightarrow \cm_2$ be a finite-dimensional vector bundle over $\cm_2$. A $C^2$ function $S: E\rightarrow \R$ is a {\em generating function} if its differential $dS$ is transversal to the manifold 
$\cw=\{ \xi\in T_e^*E; e\in E\quad{\rm and}\quad \xi=0\quad{\rm on}\quad T_e(p^{-1}(p(e)))\}$.

\item Then the {\em critical locus} $\Sigma_S$ of $S$ is the set $\Sigma_S=dS^{-1}(\cw)$.

\item The map $i_S: \Sigma_S\rightarrow T^*\cm_2$ is defined by $i_S(e): T_{p(e)}^*\cm_2\rightarrow T^*\cm_2$, $i_S(e)\delta x=dS(e).\delta e$ where $\delta e\in T_eE$ is any vector so that $dp(e).\delta e=\delta x$. 
\item If $\cg$ is a Lagrangian submanifold of $T^*M$, $S:E\rightarrow \R$ {\em generates} $\cg$ if $i_S$ is a diffeomorphism from $\Sigma_S$ onto $\cg$.
\item When the bundle $E=\cm_2\times \R^k$ is trivial and there exists a non-degenerate quadratic form $Q:\R^k\rightarrow \R$ such that $S=Q$ outside a compact subset, we say that $S$ is {\em special}. The {\em index} of $S$ is then the index of $Q$
\end{itemize}
\end{defis}
\begin{sikthm}
Let $\cg$ be a Lagrangian submanifold of $T^*\cm_2$ that is Hamiltonianly isotopic to the zero section. Then $\cg$ admits a special generating function.
\end{sikthm}

\begin{nota}
We denote a special generating function of $\cg$ by $S(q, \tau; \xi)$ with $\xi\in \R^k$. 
\end{nota}
\begin{remks}
When $S:\cm_2\times \R^k\rightarrow \R$ is special, we have:
$$\Sigma_S=\{ (q, \xi)\in \cm_2\times \R^k; \frac{\partial S}{\partial \xi}(q, \xi)=0\};$$
and 
$$\forall (q, \xi)\in \Sigma_S, i_S(q, \xi)=(q, \frac{\partial S}{\partial q}(q, \xi)).$$
\end{remks}
 
Observe that the condition that $S$ is a generating function means that  the map $\frac{\partial S}{\partial \xi}$ is a submersion at every point of $\Sigma_S$.

\begin{proposition}\label{P2generating}
The special functions $S_0:(q, \xi)\in M\times \R^k\rightarrow  S(q, 0; \xi)$ and $S_1:(q, \xi)\rightarrow S(q, 1; \xi)$ generate the Lagrangian submanifold $\cl\subset T^*M$.
\end{proposition}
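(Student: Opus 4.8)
The plan is to realize $S_0$ and $S_1$ as the ``slices at $\tau=0$ and $\tau=1$'' of a special generating function $S$ of $\cg$, and to exploit the fact that both time-slices $\cg\cap\{\tau=0\}$ and $\cg\cap\{\tau=1\}$ of $\cg$ project onto $\cl$.

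First I would identify these two slices explicitly. From (\ref{flow_ck}) the point $\phi_\ck^t(q,p,0,-K(q,p,0))$ has $\tau$-coordinate $[t]_2$, and $K(\cdot,0)=K(\cdot,1)=0$ because $\dot\eta(0)=\dot\eta(1)=0$. Hence, running $t$ over $[0,2]$, only $t\in\{0,2\}$ gives $\tau=0$ and only $t=1$ gives $\tau=1$; using $2$-periodicity, $\eta(0)=0$, $\eta(1)=1$ and $\varphi_K^{0,1}=\varphi_H^{0,1}$, one gets
$$\cg\cap\{\tau=0\}=\{(q,p,0,0):(q,p)\in\cl\},\qquad \cg\cap\{\tau=1\}=\{(\varphi_H^{0,1}(q,p),1,0):(q,p)\in\cl\}.$$
Since $\cl$ is invariant by the time-one map, the second set equals $\{(q,p,1,0):(q,p)\in\cl\}$ as well, so for $c\in\{0,1\}$ the projection $\pi_{T^*M}$ maps $\cg\cap\{\tau=c\}$ diffeomorphically onto $\cl$.

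The central point is that the hypersurfaces $\{\tau=c\}$ are transverse to $\cg$, for every $c$. Indeed $\cg$ is invariant under the flow $\phi_\ck$ (its defining union over $t\in[0,2]$ is, by $2$-periodicity, a union over all $t\in\R$), so $X_\ck$ is everywhere tangent to $\cg$, and $d\tau(X_\ck)=1$ by (\ref{champ_hamiltonien_ck}); thus $\tau:\cg\to\T_2$ is a submersion. Because $i_S:\Sigma_S\to\cg$ is a diffeomorphism that preserves the $\tau$-coordinate, $\tau:\Sigma_S\to\T_2$ is a submersion as well; equivalently, $d\tau$ is nonzero on $T\Sigma_S=\ker d(\partial_\xi S)$ at every point. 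Fix $c\in\{0,1\}$ and $e_0=(q_0,c,\xi_0)\in\Sigma_S$. Since $\partial_\xi S$ is submersive at $e_0$ with kernel $T_{e_0}\Sigma_S$ of dimension $n+1$, and $d\tau$ does not vanish on this kernel, a dimension count gives that the restriction of $d_{e_0}(\partial_\xi S)$ to the hyperplane $\{\delta\tau=0\}$ is still onto $\R^k$; but this restriction is precisely $d(\partial_\xi S_c)$ at the corresponding point of $M\times\R^k$. Hence $\partial_\xi S_c$ is a submersion along $\Sigma_{S_c}=(\partial_\xi S_c)^{-1}(0)$, i.e.\ $S_c$ is a generating function; it is special because $S$ coincides with a nondegenerate quadratic form outside a compact set and $M\times\T_2$ is compact.

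It remains to see that $S_c$ generates $\cl$, i.e.\ that $i_{S_c}:\Sigma_{S_c}\to\cl$ is a diffeomorphism. Under the obvious identification $\Sigma_{S_c}\cong\Sigma_S\cap\{\tau=c\}$, $(q,\xi)\mapsto(q,c,\xi)$, we have $i_{S_c}(q,\xi)=(q,\partial_qS(q,c,\xi))=\pi_{T^*M}(i_S(q,c,\xi))$. As $i_S$ is a diffeomorphism onto $\cg$ it restricts to a diffeomorphism $\Sigma_S\cap\{\tau=c\}\to\cg\cap\{\tau=c\}$, and $\pi_{T^*M}$ carries the latter diffeomorphically onto $\cl$ by the first step; composing, $i_{S_c}:\Sigma_{S_c}\to\cl$ is a diffeomorphism, which is the claim for $c=0$ and $c=1$. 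The main obstacle is the transversality argument of the third paragraph — showing that slicing a generating function at $\tau=c$ again produces a generating function — which is exactly where one uses that $\cg$ is $\phi_\ck$-invariant, so that $\tau$ is never critical on $\cg$ (hence on $\Sigma_S$); everything else is bookkeeping.
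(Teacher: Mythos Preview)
Your proof is correct and follows essentially the same route as the paper: the key step in both is that $X_\ck$ is tangent to $\cg$ with $d\tau(X_\ck)=1$, so $\tau$ is a submersion on $\Sigma_S$, hence restricting $\partial_\xi S$ to $\{\delta\tau=0\}$ loses no rank. The paper carries this out by computing $X_\ck=(0,0,1,0)$ explicitly at $\tau=0,1$ (invoking $\dot\eta(j)=\ddot\eta(j)=0$) and then writing the relation $\partial^2_{\tau\xi}S=-\partial^2_{\xi\xi}S\,\delta\xi_j$, whereas your abstract linear-algebra dimension count shows that only $d\tau(X_\ck)\ne 0$ is needed, so the hypothesis $\ddot\eta(j)=0$ is not actually used here; otherwise the arguments coincide, and you have also spelled out the ``straightforward'' parts (specialness and $i_{S_c}$ being a diffeomorphism onto $\cl$) that the paper omits.
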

\begin{proof}
The only non trivial thing to be proved is that the functions $S_i$ are generating function. Then the fact that they are special and that they generate $\cl$ is straightforward.\\
We recall that $$
\cg=\{\phi_\ck^t (q,p,0,0), (q,p)\in \cl,  \ t\in [0,2]\} .
$$
Hence if $(q, p)\in\cl$, we have the equalities  $X_\ck(q, p, 0, 0)=(0, 0, 1,0)\in T_{(q, p, 0, 0)}\cg$ and $X_\ck(\varphi_\ck^{0, 1}(q, p), 1, 0)=(0, 0, 1,0)\in T_{\phi_\ck^1(q, p, 0, 0)}\cg$ because $\dot{\eta}(0)=\ddot{\eta}(0)=\dot{\eta}(1)=\ddot{\eta}(1)=0$.\\
Let us recall that $i_S(q, \tau, \xi)=(q, \frac{\partial S}{\partial q}(q, \tau, \xi), \tau, \frac{\partial S}{\partial \tau}(q, \tau, \xi))$. Then, for $j=0, 1$, we have
$(D(i_S)^{-1}(q, p, j, 0))(0, 0, 1, 0)=(0, 1, \delta \xi_j)  \in T_{i_S^{-1}(q, p, j, 0)}\Sigma_S$. \\
As the equation of $\Sigma_S$ is $\frac{\partial S}{\partial \xi}(q, \tau, \xi)=0$, we deduce that
 $$\frac{\partial^2 S}{\partial \tau\partial \xi}(i_S^{-1}(q, p, j))=-\frac{\partial^2 S}{\partial \xi^2}(i_S^{-1}(q, p, j))\delta\xi_j.$$
This equality implies that for every $(q, p)\in\cl$ and $j=0, 1$, we have
$${\rm Im}\,D\left( \frac{\partial S}{\partial \xi}\right)(i_S^{-1}(q, p,j,0))={\rm Im}\,D\left(
\frac{\partial S}{\partial \xi}\right)(i_S^{-1}(q, p,j,0))_{|\delta\tau=0}=\R^k,$$
i.e. that $S_j$ is a generating function.
\end{proof} 

In the next subsection, we will build what is called a graph selector and we will prove that it doesn't depend on the generating function that we choose. To do that, we need a uniqueness result for the generating functions that is due to C.~Viterbo. Let us explain this.

\begin{defis}
Let $p:E\rightarrow \cm_2$ be a finite dimensional vector bundle and let $S:E\rightarrow \R$ be a generating function. Let us define the {\em basic operations} on generating functions:
\begin{itemize}
\item {\em Translation.} If $c\in\R$, then $S'=S+c:E\rightarrow \R$.
\item {\em Diffeomorphism.} If $p':E\rightarrow \cm_2$ is another vector bundle and $F: E'\rightarrow E$ is a diffeomorphism such that $p\circ F=p'$, then $S'=S\circ F:E'\rightarrow \R$.
\item {\em Stabilization.} If $p':E'\rightarrow \cm_2$ is another finite dimensional vector bundle endowed with a function $Q':E'\rightarrow \R$ that is quadratic non-degenerate when restricted to the fibers of $p'$, then $S'=S\oplus Q': E\oplus E'\rightarrow \R$.
\end{itemize}
Then, two generating functions are {\em equivalent} if they can be made equal after a succession of basic operations.
\end{defis}
\begin{remk}
Observe that the basic operations that are given by the two first item are reversible in the following sense: if $S'$ is obtained from $S$ by such an operation, then $S$ is obtained from $S'$ by a similar operation. This is not the case for the third basic operation, for which we can only add variables. \\
That is why the definition of equivalence is a little subtle: $S$ is equivalent to $S'$ if there exists a third generating function $S''$ so that $S''$ can be deduced from $S$ by some basic operations and $S''$ can be deduced from $S'$ by some basic operations.
\end{remk}

\begin{vitthm}
Two special functions that generate the same Lagrangian submanifold are equivalent.
\end{vitthm}

\begin{remk}
The property of being special is not preserved by the basic operations.
\end{remk} 
 
\subsection{Graph selector}
Using the generating function $S$, we will construct a {\sl graph selector} 
$u:\cm_2\rightarrow \R$. Such a graph selector was introduced by M.~Chaperon in \cite{Chap} (see \cite{PPS} and \cite{Sib} too) by using the homology. Here we prefer to use the cohomological approach. We now explain this.  
\begin{notas}
Let $p:E\rightarrow \cm_2$ be a finite dimensional vector bundle. If $S: E\rightarrow \R$ is a  function that generates a Lagrangian submanifold, $q\in \cm_2$ and $a\in\R$ is a real number, we denote the sublevel with height $a$ at $q$ by $$S^a_q=\{e\in E;  \quad p(e)=q\quad{\rm and}\quad S(e)\leq a\}$$ and we use the notation $S_q=S_{|E_q}$.
\end{notas}
When $S$ is special with index $m$, there exists $N\geq 0$ such that all the critical values are in $(-N, N)$. Then the De Rham relative cohomology space with compact support
$H^*(E_q, S_q^{-N})$ is isomorphic to $\R$ for $*=m$ and trivial if $*\not=m$. We denote by $\alpha_q$ a closed $m$-form with compact support on $E_q$  such that $\alpha_{q|S_q^{-N}}=0$ and $0\not= [\alpha_q] \in H^m(E_q, S_q^{-N})$.

If $a\in (-N, N)$, we use the notation $i_a: (S_q^a, S_q^{-N})\rightarrow (E_q, S_q^{-N})$ for the inclusion and then $i_a^*:H^m(E_q, S_q^{-N})\rightarrow H^m(S_q^a, S_q^{-N})$. The {\em graph selector} $u:\cm_2\rightarrow \R$ is then defined by:
$$u(q)=\sup\{ a\in \R; [i_a^*\alpha_q]=0\}=\inf\{ a\in \R; [i_a^*\alpha_q]\not=0\}.$$

\begin{proposition}\label{unicselector}
Let $p:E\rightarrow \cm_2$ be a finite dimensional vector bundle, let $S:E\rightarrow \R$ be a special generating function with index $m$ and let $\sigma:E'\rightarrow \R$ be a generating function that is got from $S$ after a succession of basic operations. If there are exactly $k$ stabilizations among these basic operations (with indices $m_1, \dots, m_k$), the sum of all the indices is denoted by $\displaystyle{\ell=m+\sum_{j=1}^k  m_j}$.\\
Then, for $N$ positive large enough, $H^{\ell}(E'_q, \sigma_q^{-N})$ is isomorphic to $\R$; if $[{\rm A}_q]$ is one of its generator, we can define a graph selector by 
$$U(q)=\sup\{ a\in \R; [i_a^*{\rm A}_q]=0\}=\inf\{ a\in \R; [i_a^*{\rm A}_q]\not=0\}.$$
This graph selector is equal to the one associated to $S$ plus a constant.

\end{proposition}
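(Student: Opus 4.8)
The plan is to argue by induction on the number of basic operations in the succession joining $S$ to $\sigma$, so that it is enough to understand how the relative cohomology of the sublevel pairs and the associated minimax value transform under a single basic operation. Precisely, I would carry along as inductive invariant the statement: ``for $N$ large, $H^{\ell_i}(E_{i,q},(\sigma_i)^{-N}_q)$ is isomorphic to $\R$ (and vanishes in the other degrees), with $\ell_i=m+\sum m_j$ summed over the stabilizations performed so far, the minimax $\sup\{a;\ [i^*_a\alpha_{i,q}]=0\}$ is well defined and depends on $\alpha_{i,q}$ only through its nonzero cohomology class''. The base case $\sigma_0=S$ is exactly the fact recalled above for special functions, with $\ell_0=m$; at the end one telescopes over all the operations, and only the translations will have moved the value.

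For a translation $\sigma=S+c$ there is essentially nothing to do: $\sigma^a_q=S^{a-c}_q$, so every pair and every inclusion is the previous one reindexed by $a\mapsto a-c$; the cohomology is unchanged and the minimax is shifted by $c$. For a diffeomorphism $\sigma=S\circ F$, with $F:E'\to E$ fibre-preserving, the fibrewise diffeomorphism $F_q:E'_q\to E_q$ satisfies $\sigma^a_q=F_q^{-1}(S^a_q)$, hence is a homeomorphism of pairs $(E'_q,\sigma^{-N}_q)\to(E_q,S^{-N}_q)$ commuting with all the inclusions $i_a$; it therefore transports a generator to a (nonzero multiple of a) generator and commutes with the maps $i^*_a$, so the minimax is unchanged.

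The substantive case --- and the one I expect to be the main obstacle --- is a stabilization $\sigma=S\oplus Q'$, $\sigma(e,e')=S(e)+Q'(e')$, with $Q'$ fibrewise non-degenerate quadratic of index $m'$. First I would split $E'_q=V^+\oplus V^-$ into the eigenspaces of $Q'_q$ (so $\dim V^-=m'$), diagonalise $Q'_q=|y^+|^2-|y^-|^2$, and observe that the linear deformation $(e,y^+,y^-)\mapsto(e,(1-t)y^+,y^-)$ only decreases $\sigma$ and so retracts each sublevel $\sigma^a_q$ onto the slice $\{(e,0,y^-);\ S_q(e)-|y^-|^2\le a\}$, compatibly in $a$. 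It then remains to identify, for $N$ large, the relative cohomology of $(E_q\times V^-,\{S_q(e)-|y^-|^2\le a\})$ with that of $(E_q,S^a_q)$ shifted up in degree by $m'$, by a Thom-isomorphism argument, and --- crucially --- to check that this identification intertwines the restriction maps $i^*_a$ on the two sides; this works because $Q'$ has no constant term, so subtracting $|y^-|^2$ does not move the critical levels. Granting this, for $N$ above all critical values of $\sigma_q$ (which coincide with those of $S_q$) one obtains $H^{\ell_i}(E_q\oplus E'_q,\sigma^{-N}_q)\cong H^{\ell_{i-1}}(E_q,S^{-N}_q)\cong\R$ with $\ell_i=\ell_{i-1}+m'$, the isomorphism carrying $\alpha_q$ to a generator $A_q$ and commuting with the $i^*_a$, whence $[i^*_aA_q]=0\iff[i^*_a\alpha_q]=0$ and $U(q)=u(q)$.

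Putting the three cases together along the induction gives that $U$ and $u$ differ by a constant, namely the sum of the translation constants used. The only genuinely delicate point is the Thom-isomorphism bookkeeping in the stabilization step: one must make sure that the degree shift by $m'$ is accompanied by an exact matching of the sublevels, so that the minimax value is literally unchanged rather than merely shifted by some quantity depending on $Q'$ --- and this is precisely where the normalisation $Q'(0)=0$ enters.
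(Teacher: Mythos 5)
Your outline is correct, but for the crucial stabilization step it follows a genuinely different route from the paper. You retract each sublevel of $\sigma=S\oplus Q'$ fibrewise onto the negative eigenspace of $Q'$ and then invoke a relative Thom isomorphism to realise the degree shift by $m'$ compatibly with the inclusions $i_a$ --- the classical Viterbo--Th\'eret mechanism. The paper instead (a) computes $H^*(E_q\oplus F_q,\sigma_q^{-N-C})$ by sandwiching $\sigma$ between translates of $Q_0\oplus Q$ and a two-out-of-three argument on inclusion-induced maps, and (b) proves the equality of minimaxes by an $\varepsilon$-argument in both directions: for $U\le u+\varepsilon$ it builds the cup product $\alpha\vee\beta$ of compactly supported closed forms and evaluates it on a product cycle $A\times B$ pushed into low sublevels by gradient flows; for $U\ge u-\varepsilon$ it glues primitives $\alpha_1\vee\beta$ and $(-1)^{m_0}\alpha\vee\beta_1$ via an excision argument to trivialise $\alpha\vee\beta$ on $\sigma^{a-\varepsilon/2}$. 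Both proofs rest on the same fact you isolate (the added quadratic form has critical value $0$, so the K\"unneth-type degree shift does not move the minimax), and your version is conceptually cleaner and even dispenses with the paper's sandwich lemma, since your $y^+$-retraction decreases $\sigma$ exactly rather than up to a bounded error. The price is that the two points you defer are precisely where the work lies: the Thom isomorphism you need is for the pairs $(T^a,T^{-N})$ versus $(S^a_q,S^{-N}_q)$ (not $(E_q\times V^-,T^a)$ as written), and its naturality with respect to all the $i_a$ must be checked; and your induction silently assumes that a graph selector is still well defined for the intermediate, non-special generating functions produced along the way --- the paper handles this explicitly by composing with a fibre diffeomorphism $\psi$ making $S\circ\psi$ quadratic at infinity. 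Neither point is an obstruction, only bookkeeping you would have to supply.
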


\begin{proof}
If the basic operation that we use is  a translation or a diffeomorphism, the proposition is straighforward. The only non trivial case concerns stabilization. From now, we forget the translations and the constants and we can assume that we are in the following case.

Assume that $S:E\rightarrow \R$ is a generating function such that  after a fiber diffeomorphism $\psi: E\rightarrow E$, $S\circ \psi$ is non-degenerate quadratic in every fiber outside some compact subset, the quadratic form being denoted by $Q_0$ and having   index $m_0$. Then $H^{m_0}(E_q, S_q^{-N})$ is isomorphic to $\R$ and $H^{*}(E_q, S_q^{-N})=\{ 0\}$ if $*\not= m_0$. For such a function we can define a graph selector $u$ as before (even if this function is not special).\\
Assume that  $Q:F\rightarrow \R$  is  a non-degenerate quadratic form with index $m$ when restricted to the fibers of $p':F\rightarrow\cm_2$ and let us use the notation $\cs=(S\circ\psi)\oplus Q: E\oplus F\rightarrow \R$. \\

{\em Dimension of $H^*(E_q\oplus F_q,\cs_q^{-N-C})$}

Observe that $|\cs-Q_0\oplus Q|$ is bounded by some constant $C$. Hence we have:
$$\{ Q_0\oplus Q\leq a-C\}\subset \{ \cs\leq a\}\subset \{ Q_0\oplus Q\leq a+C\}.$$
Then we choose $N\geq 0$ such that all the critical values of $S$ (and then of $\cs$) are in $(-N, N)$. Then the inclusion maps induce the following homomorphisms
$$ H^*(E_q\oplus F_q, Q_0\oplus Q\leq -N)\begin{matrix} j_3^*\\\longrightarrow\\{}\\ \end{matrix} H^*(E_q\oplus F_q, \cs_q^{-N-C})\begin{matrix} j_2^*\\\longrightarrow\\{}\\ \end{matrix}$$
$$H^*(E_q\oplus F_q, Q_0\oplus Q\leq -N-2C)\begin{matrix} j_1^*\\\longrightarrow\\{}\\ \end{matrix} H^*(E_q\oplus F_q, \cs_q^{-N-3C}).$$
As the pairs $(E_q\oplus F_q, Q_0\oplus Q\leq -N)$ and $(E_q\oplus F_q, Q_0\oplus Q\leq -N-2C)$ are homotopically equivalent and as the pairs  $(E_q\oplus F_q, \cs_q^{-N-C})$ and $(E_q\oplus F_q, \cs_q^{-N-3C})$ are homotopically equivalent, the maps  $j_1^*\circ j_2^*$ and $j_2^*\circ j_3^*$ are  isomorphisms, and then $j_2^*$ is an isomorphism too. We deduce that $H^*(E_q\oplus F_q, \cs_q^{-N-C})$ is isomorphic to $\R$ if $*=m_0+m$ and $\{ 0\}$ if $*\not=m_0+m$.\\
The same is true if we replace $\cs$ by the function $\sigma=S\oplus Q$ that will be denoted by $\sigma$ from now.\\

{\em A first inequality between the two graph selectors}

Let $\varepsilon$ be  a positive number. We will prove that $U(q)\leq u(q)+\varepsilon =: a+\frac{\varepsilon}{2}$.\\
Let $\alpha$ be a closed $m_0$-form that vanishes on $S_q^{-N}$ and is such that 
$0\not=[i_a^*\alpha]\in H^{m_0}(S_q^a, S_q^{-N})$ and let $\beta$ be a closed  $m$-form that vanishes on $Q_q^ \varepsilon$  and such that $0\not=[i_{\frac{\varepsilon}{2}}^*\beta]\in H^{m}(Q_q^\frac{\varepsilon}{2}, Q_q^{-\varepsilon})$. We denote by $A$ (resp. $B$) a $m_0$-cycle of $S_q^{a}$ with boundary in $ S_q^{-N}$ (resp.  $m$-cycle of $Q_q^\frac{\varepsilon}{2}$ with boundary in $Q^{-\varepsilon}$) such that $\alpha(A)\not=0$ (resp. such that $\beta (B)\not=0$). We use the notation $\mu_A=\sup Q_{|A}$ and $\mu_B=\sup S_{|B}$. Using the gradient flow of $Q_0$ on $S_q^{-N}$ to push $A$ or the gradient flow of $Q$ on $Q^{- \varepsilon}$ to push $B$, we can asssume that $S_{|\partial A}\leq -\varepsilon -N-\mu_A$ and  $Q_{|\partial B}\leq -\varepsilon -N-\mu_B$; observe that this implies that $\partial (A\times B)=(\partial A\times B)\cup (A\times \partial B)\subset \sigma^{-\varepsilon-N}$. \\
Then the cup product $\alpha\vee \beta$ is a closed $(m+m_0)$-form that vanishes  in $(Q_q^{-\varepsilon}\times F_q)\cup (E_q\times S_q^{-N})$ and such that $(\alpha\vee\beta)(A\times B)\not=0$. As the set $(Q_q^{-\varepsilon}\times F_q)\cup (E_q\times S_q^{-N})$ contains $\sigma^{-\varepsilon-N}$ and as the support of $A\times B$ is in $S_q^a\times Q_q^\frac{\varepsilon}{2}\subset \sigma^{a+\frac{\varepsilon}{2}}$, we deduce that 
$0\not=[i_{a+\frac{\varepsilon}{2}}^*(\alpha\vee \beta)]\in H^{m+m_0}(\sigma^{a+\frac{\varepsilon}{2}}, \sigma^{-\varepsilon-N})$ and thus $U(q)\leq a+\frac{\varepsilon}{2}=u(q)+\varepsilon$. Hence we have $U(q)\leq u(q)$.\\

{\em The reverse  inequality between the two graph selectors}

Let us now prove that for $\varepsilon >0$, we have $U(q)\geq u(q)-\varepsilon=a-\frac{\varepsilon}{2}$. We use the notation $j: \sigma^{a-\frac{\varepsilon}{2}}_q\rightarrow E_q\oplus F_q$, $j_1: S^a_q\rightarrow E_q$ and $j_2: Q^{-\varepsilon}_q\rightarrow F_q$ for the inclusion maps. \\
As $H^*(S^{a}_q, S^{-N}_q)$ and $H^*(Q^{-\frac{\varepsilon}{2}}_q, Q^{-\varepsilon}_q)$ are trivial,  there exists a $(m_0-1)$-form $\alpha_1$ on $S^a_q$ such that $\alpha_{1|S\leq -N}=0$ and $j^*_1\alpha=d\alpha_1$ and a $(m-1)$-form $\beta_1$ on $Q^{-\frac{\varepsilon}{2}}$ such that $\beta_{1|Q\leq -\varepsilon}=0$ and $j^*_2\beta =d\beta_1$. 

 Observe that $(S_q^a\times Q_q^{-\varepsilon}
,  S_q^{-N}\times Q_q^{-\frac{\varepsilon}{2}})$ is an excisive couple (see \cite{Mas}), hence all the following cohomology spaces vanish because they can be expressed with  the trivial spaces $H^*(S_q^a, S_q^{-N})$  and $H^*(Q_q^{-\frac{\varepsilon}{2}}, Q^{-\varepsilon})$ 
 $$H^*\left(S_q^a\times  Q_q^{-\frac{\varepsilon}{2}}, \left( S_q^a\times Q_q^{-\varepsilon}\right) \cup \left( S_q^{-N}\times Q_q^{-\frac{\varepsilon}{2}}\right) \right)=\{ 0\}.
 $$
 We have $d(\alpha_1\vee\beta)=d((-1)^{m_0}\alpha\vee\beta_1)=\alpha\vee\beta$.
 We deduce that there exists a $(m_0+m-2)$-form $\mu$ on $S_q^a\times  Q^{-\frac{\varepsilon}{2}}_q$ that vanishes on $(S_q^a\times Q_q^{-\varepsilon})\cup (S_q^{-N}\times Q_q^{-\frac{\varepsilon}{2}})$ and is such that $\alpha_1\vee\beta-(-1)^{m_0}\alpha\vee\beta_1=d\mu$. We can extend $\mu$ in a $(m_0+m-2)$-form that is defined on $E_q\oplus F_q$ and vanishes on $(E_q\times Q^{-\varepsilon}_q)\cup (S^{-N}_q\times F_q)$. Then the  $(m_0+m-1)$-form  $\alpha_1\vee\beta$ that is defined on $S_q^a\times F_q$ coincides  on the intersection of the two sets with the $(m_0+m-1)$-form $(-1)^{m_0}\alpha\times\beta_1+d\mu$ that is defined on $E_q\times Q^{-\frac{\varepsilon}{2}}$. Together, they define a $(m_0+m-1)$-form $\mu_1$ on $(S_q^a\times F_q)\cup (E_q\times Q^{-\frac{\varepsilon}{2}})\supset \sigma_q^{a-\frac{\varepsilon}{2}}$ such that
 \begin{itemize}
 \item $\mu_1$ vanishes on $(E_q\times Q^{-\varepsilon})\cup (S^{-N}\times F_q)\supset \sigma^{-N-\varepsilon}$;
 \item $d\mu_1=\alpha\vee\beta$.
 \end{itemize}
We deduce that $0=j^*(\alpha\vee\beta)\in H^{m+m_0}(\sigma_q^{u(q)-\varepsilon}, \sigma_q^{-N-\varepsilon})$ and then that $U(q)\geq u(q)-\varepsilon$. Hence $U(q)\geq u(q)$ and finally $u(q)=U(q)$.

\end{proof} 

\begin{notas}
  From now we denote by $S: (q, \tau, \xi)\in E\rightarrow S(q, \tau, \xi)$ a special generating function for $\cg$. The critical locus is denoted by $\Sigma$ and the associated embedding is $i=i_S:\Sigma\rightarrow T^*\cm_2$.
  We denote by $u: (q, \tau)\in \cm_2\rightarrow u(q, \tau)$ the graph selector associated to $S$.
\end{notas}

Following the proofs that are contained in \cite{PPS} or \cite{Sib} for the homology, we will prove

\begin{proposition}\label{geneselec}
Let $u:\cm_2\rightarrow \R$ be a graph selector for the special generating function $S:\cm_2\times \R^k\rightarrow \R$.  Then $u$ is a Lipschitz function that is $C^1$ on an open subset $U_0\subset \cm_2$ with full Lebesgue measure, and for every $z=(q,t)\in U_0$, the following properties hold
\begin{equation}
J_u(z)\in \cg,\quad \text{and} \quad u(z)=S\circ i^{-1}(J_u(z)),\end{equation}
where $J_u(z)=(q,d_q u(z),t,\frac{\partial u}{\partial t}(z))\in
T^*M\times\T_2\times\R$ and with the usual identification
$T^* \cm_2=T^*M\times \T_2\times\R$.
\end{proposition}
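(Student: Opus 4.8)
The plan is to show that the graph selector $u$ inherits from the special generating function $S$ the three classical properties: Lipschitz regularity, $C^1$ differentiability on a full-measure open set, and the min-max (critical point) characterization linking $J_u(z)$ to the critical locus $\Sigma$. The structure of the argument mirrors the homological treatment in \cite{PPS} and \cite{Sib}, translated into the cohomological language set up above.

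\emph{Step 1: $u$ is a critical value of $S_q$ for every $q$.} Fix $z=(q,t)\in\cm_2$. By definition $u(z)=\sup\{a:[i_a^*\alpha_q]=0\}$. Since $S$ equals a fixed nondegenerate quadratic form $Q$ outside a compact set, each sublevel $S_z^a$ has the homotopy type that changes only when $a$ crosses a critical value of $S_z=S_{|E_z}$; away from critical values the inclusion $i_a$ induces isomorphisms on the relevant relative cohomology, so $[i_a^*\alpha_q]$ cannot switch from zero to nonzero at a regular value. Hence $u(z)$ is a critical value of $S_z$, i.e. there is $\xi$ with $\partial S/\partial\xi(z,\xi)=0$, equivalently $(z,\xi)\in\Sigma$, and $S(z,\xi)=u(z)$. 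This already pins down the "$u(z)=S\circ i^{-1}(\cdot)$" shape of the statement once differentiability is established.

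\emph{Step 2: $u$ is Lipschitz.} The value $u(z)$ is a min-max over the (fiberwise compact, after the quadratic normalization) family $S_z$, and $S$ is $C^2$ with $\partial S/\partial q$ bounded on the region where the fiberwise min-max is attained (a consequence of compactness of $\cm_2$ and of $S$ being special: the critical locus $\Sigma$ projects properly to $\cm_2$, so relevant $\xi$'s stay in a compact set over any compact set of $q$'s). Standard min-max stability then gives $|u(z_1)-u(z_2)|\le (\sup\|\partial S/\partial q\|)\,d(z_1,z_2)$. I would phrase this via: for $z$ near $z_0$, comparing $S_z$ and $S_{z_0}$ shows the min-max values differ by at most a constant times $d(z,z_0)$.

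\emph{Step 3: differentiability and the graph property.} Let $U_0$ be the set of points where $u$ is differentiable; by Rademacher it has full Lebesgue measure, and Lipschitz functions are differentiable on an open dense set once one knows—as here—that the min-max is "clean", but to get $U_0$ open I would instead define $U_0$ as the set of $z$ at which the min-max critical point is unique and nondegenerate in $\xi$; such $z$ form an open set, and one checks it has full measure by a Sard-type argument applied to $S$ restricted to $\Sigma$ (the critical values form a measure-zero set after intersecting with generic fibers). At such a $z_0=(q_0,t_0)$ with unique critical point $\xi_0$, the implicit function theorem gives a $C^1$ branch $z\mapsto\xi(z)$ solving $\partial S/\partial\xi=0$ with $\xi(z_0)=\xi_0$, and $u(z)=S(z,\xi(z))$ locally; differentiating and using $\partial S/\partial\xi(z,\xi(z))=0$ yields $d_qu(z)=\partial S/\partial q(z,\xi(z))$ and $\partial u/\partial t(z)=\partial S/\partial\tau(z,\xi(z))$, i.e. $J_u(z)=i_S(z,\xi(z))\in\cg$ and $u(z)=S\circ i^{-1}(J_u(z))$. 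Independence of the choice of special generating function (hence well-definedness of $u$ up to a constant) is Proposition \ref{unicselector} combined with Viterbo's uniqueness theorem.

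\emph{Main obstacle.} The delicate point is Step 3: getting $U_0$ \emph{open} and of \emph{full measure} simultaneously. Lipschitzness plus Rademacher only gives full measure, not openness, and at points where the fiberwise min-max critical point is non-unique or degenerate the selector can fail to be differentiable. The resolution is to show the "bad" set—where $S_q$ has a degenerate critical point at the min-max level, or where the min-max level is achieved at several critical points—is closed and Lebesgue-null; nullity follows from a parametric Sard argument (the critical values of $S$ along $\Sigma$, as a function of $q$, are generically regular values of the projection), and then on the complement the implicit function theorem runs cleanly. Everything else is a routine min-max stability computation.
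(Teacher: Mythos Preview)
Your outline follows the same three-step strategy as the paper (critical value, Lipschitz, $C^1$ on an open full-measure set via the implicit function theorem), and Steps 1 and 2 are essentially identical in content; the paper phrases the Lipschitz bound as a direct cohomological inclusion $S_z^a\subset S_{z'}^{a+L\,d(z,z')}$ rather than ``min-max stability'', but this is the same argument.

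The one substantive difference is the definition of $U_0$ in Step 3. You take $U_0$ to be the set of $z$ at which the min-max critical point is unique and nondegenerate; the paper instead first defines $U_1=\{z: S_z\text{ is Morse}\}$ (open, full measure by Sard applied to the projection $\Sigma\to\cm_2$) and then $U_0\subset U_1$ as the locus where \emph{all} critical values of $S_z$ are pairwise distinct. Your weaker condition is harder to show open: to conclude that near $z_0$ the selector $u$ follows the IFT branch $\xi(z)$ you need no other critical value to drift to $u(z_0)$, and if $S_{z_0}$ is allowed degenerate critical points elsewhere this is awkward (the critical set need not even be finite). The paper's two-stage restriction sidesteps this: on $U_1$ the critical locus over a small neighbourhood is a finite union of $C^1$ graphs $z\mapsto\eta_i(z)$, the branch values $z\mapsto S(z,\eta_i(z))$ are $C^1$ with pairwise distinct derivatives (because $i_S$ is an embedding), hence their coincidence sets have measure zero, and on a connected component of $U_0$ continuity of $u$ picks out a single branch. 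Your ``main obstacle'' paragraph correctly identifies the difficulty; the cleanest fix is precisely to strengthen the defining conditions on $U_0$ as the paper does, rather than to work only at the min-max level.
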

\begin{proof}
We assume that  $S=Q$ on all the levels that are not in $(-N, N)$ and we denote   the index of $Q$ by $m$. Let us fix $z\in\cm$. We denote by $\alpha$ a $m$-form on $\R^k$ that vanishes on $Q^{-N}$ and is such that $0\not=[\alpha]\in H^m(\R^k, Q^{-N})$. Because there is a change in the topology of the sublevel with height $u(z)$, $u(z)$ is a critical value of $S_z$. \\
Let us prove that $u$ is Lipschitz.  Observe that the function $\cv: \cm\times\cm\times \R^k\rightarrow \R$ that is defined by $\cv(z, z', \xi)=S(z', \xi)-S(z, \xi)$ is $C^1$ and has compact support. Hence there exists a constant $L>0$ such that
$$\forall z, z'\in \cm, \forall\xi\in \R^k, |S(z, \xi)-S(z', \xi)|\leq L.d(z, z').$$
We deduce that for every $a\in \R$, $S_z^{a}\subset S_{z'}^{a+L.d(z, z')}$. Then the inclusion maps induce the following maps (note that $S_z^{-N}=S_{z'}^{-N}$):
$$H^m(\R^k, S_z^{-N})\begin{matrix} j_2^*\\\longrightarrow\\{}\\ \end{matrix}H^m(S_{z'}^{u(z)+L.d(z, z')+\varepsilon}, S_{z'}^{-N})\begin{matrix} j_1^*\\\longrightarrow\\{}\\ \end{matrix}H^m(S_{z}^{u(z)+\varepsilon}, S_{z}^{-N}).$$ We know that $0\not= (j_2\circ j_1)^*\alpha\in H^m(S_{z}^{u(z)+\varepsilon}, S_{z}^{-N})$. This implies that $j_2^*\alpha\not= 0$ and then that $u(z')\leq u(z)+\varepsilon +Ld(z, z')$.
This is also valid when we exchange $z$ and $z'$. Therefore, when we let
$\varepsilon$ go to zero, we get :
$$|u(z)-u(z')|\leq L.d(z, z').$$
Let us now prove that there exists an open subset $U_0$ of $\cm$ with full Lebesgue measure on which $u$ is $C^1$.   Observe that the  set $U_1$ of the $z\in\cm$ where $S_z$ is Morse is exactly the set of regular values of the restriction to $\Sigma_S$ of the first projection $(z, v)\in\cm\times \R^k\mapsto z$ and then has full Lebesgue measure by Sard's theorem.  It is open.  We denote by $U_0$ the set of the $z\in U_1$  such that the critical points of $S_z$   have pairwise distinct critical values.   Let us prove that $U_1\backslash U_0$ has only isolated points: this will imply that $U_0$ is open and has full Lebesgue measure. Let us consider $z\in U_1\backslash U_0$. As $S_z$ is Morse, $\Sigma_S$ is transverse to $\{ z\} \times \R^k$ and then above a neighbourhood $V_z$ of $z$ in $\cm$, $\Sigma_S$ is the union of $j$ graphs, the graphs of $\eta_1, \dots, \eta_j: V_z\rightarrow \R^k$. If we use the notation $\psi_j(z')=\frac{\partial S}{\partial q}(z', \eta_j(z'))$, then $\cg$ is the union of the disjoints graphs of $\psi_1, \dots, \psi_j$ above $V_z$.  For $z'\in V_z$, $u(z')$ is a critical value of $S_{z'}$ and then is one of the real numbers $S(z', \eta_1(z')), \dots, S(z', \eta_j(z'))$. Note that every $S(, \eta_i(.))$ is $C^1$ and that $\frac{\partial S(, \eta_i(.))}{\partial z}=\psi_i$. As the $\psi_i(z')$ are pairwise distinct, for $i\not=j$,   $\{ S(, \eta_i(.))=S(, \eta_j(.))\}$ has only isolated points.\\
Let us now consider $z\in U_0$. We can define a connected neighbourhood $V_z$, $\eta_1, \dots , \eta_j$ and $\psi_1, \dots , \psi_j$ exactly as before.  Then every $u(z')$ is one of the  $S(z', \eta_i(z'))$. Because $V_z$ is a connected part of $U_0$, there exists exactly one $i$ such that $\forall z'\in V_z, u(z')= S(z', \eta_i(z'))$. Then we have
$du(z')= \frac{\partial S}{\partial z}(z', \eta_i(z'))=\psi_i(z')$ and we deduce that
$u(z')=S\circ i_S^{-1}(J_u(z'))$ and $J_u(z')\in \cg$.
\end{proof}

\begin{proposition}\label{ident-u}
There exist a real constant $c$ such that the following identity holds
\begin{equation}
\forall q \in M,\quad u(q,1)=u(q,0)-c 
\end{equation}
\end{proposition}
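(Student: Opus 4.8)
The plan is to recognize $q\mapsto u(q,0)$ and $q\mapsto u(q,1)$ as graph selectors for two \emph{a priori} different special generating functions of the \emph{same} Lagrangian submanifold $\cl\subset T^*M$, namely $S_0$ and $S_1$ from Proposition~\ref{P2generating}, and then to invoke uniqueness of the graph selector up to an additive constant, which is Proposition~\ref{unicselector} combined with Viterbo's uniqueness theorem.

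First I would check the elementary facts about the restrictions $S_0(q,\xi)=S(q,0;\xi)$ and $S_1(q,\xi)=S(q,1;\xi)$. Since $\cm_2=M\times\T_2$ is compact and $S=Q$ for $\|\xi\|$ large, with $Q$ a fixed non-degenerate quadratic form on $\R^k$ not depending on $(q,\tau)$, the functions $S_0,S_1$ also equal $Q$ for $\|\xi\|$ large, hence are special of the same index $m$ as $S$; by Proposition~\ref{P2generating} they generate $\cl$. Moreover a number $N$ bounding the fibrewise critical values of $S$ also bounds those of $S_0$ and of $S_1$, because the fibrewise critical points of $S_0$ (resp.\ $S_1$) are exactly the points of the critical locus of $S$ lying over $\tau=0$ (resp.\ $\tau=1$). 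Unravelling the definition of $u$ from the subsection ``Graph selector'': for fixed $\tau\in\{0,1\}$ the sublevel $S^a_{(q,\tau)}$ is literally the fibrewise sublevel $(S_\tau)^a_q$, the space $H^m(\R^k,(S_\tau)^{-N}_q)\cong\R$, and the class $\alpha_{(q,\tau)}$ is a generator of it; hence $u(\cdot,0)$ is a graph selector of $S_0$ and $u(\cdot,1)$ is a graph selector of $S_1$.

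Next I would apply Viterbo's uniqueness theorem to the two special generating functions $S_0,S_1$ of $\cl$: they are equivalent, so there is a generating function $\sigma$ obtained from $S_0$ by a succession of basic operations and also obtained from $S_1$ by a succession of basic operations. Applying Proposition~\ref{unicselector} once with $(S,\sigma)=(S_0,\sigma)$ and once with $(S,\sigma)=(S_1,\sigma)$ gives a graph selector $U$ of $\sigma$ — well defined because the relevant cohomology $H^{\ell}(E'_q,\sigma^{-N}_q)$ is $\R$ in a single degree $\ell$, the same whether $\sigma$ is reached through $S_0$ or through $S_1$, and its generator is unique up to a nonzero scalar, which does not affect the sup defining $U$ — such that $U=u(\cdot,0)+c_0$ and $U=u(\cdot,1)+c_1$ for constants $c_0,c_1$. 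Subtracting yields $u(q,1)=u(q,0)+(c_0-c_1)$ for all $q\in M$, i.e.\ the claimed identity with $c=c_1-c_0$.

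The only genuinely delicate point is the bookkeeping in the last paragraph: one must know that the basic operations (translation, fibre diffeomorphism, stabilization) preserve the property of being quadratic at infinity up to a fibrewise diffeomorphism, so that $H^*(E'_q,\sigma^{-N}_q)$ really is $\R$ concentrated in one degree and the graph selector of $\sigma$ is intrinsic. This is exactly what is proved inside the proof of Proposition~\ref{unicselector} (the ``Dimension of $H^*$'' step), so I would simply cite it rather than redo it. Everything else is a direct unwinding of the definitions, so I do not expect a serious obstacle.
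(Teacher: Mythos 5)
Your proposal is correct and follows exactly the paper's route: the paper's own proof consists precisely of noting that $S_0$ and $S_1$ are special generating functions of $\cl$ (Proposition~\ref{P2generating}) and then invoking Proposition~\ref{unicselector} (which rests on Viterbo's uniqueness theorem) to conclude that the associated graph selectors $u(\cdot,0)$ and $u(\cdot,1)$ differ by a constant. The details you fill in — that the fibrewise sublevels of $S$ over $\tau=0,1$ coincide with those of $S_0,S_1$, and the two-sided application of Proposition~\ref{unicselector} through a common $\sigma$ — are exactly the unwinding the paper leaves implicit.
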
 
\begin{proof}
We proved in Proposition \ref{P2generating} that $S(q, 0; \xi)$ and $S(q, 1,; \xi)$ are two generating functions for $\cl$.  We deduce from Proposition \ref{unicselector} the wanted result.
\end{proof}
\begin{cor} \label{identit-S}
For the same constant $c$ that is defined in Proposition \ref{ident-u}, the function $S\circ i_S^{-1}$ satisfies the identity 
\begin{equation}\label{eqgene}
S\circ i_S^{-1}(q,p,1,0)=S\circ i_S^{-1}(q,p,0,0)-c, \qquad (q,p)\in \cl.
\end{equation}
\end{cor}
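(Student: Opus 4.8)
The plan is to reduce the identity \eqref{eqgene} to the two ``slices'' $\tau=0$ and $\tau=1$ of $\cg$, on which $S\circ i_S^{-1}$ can be recognized as the action of the reduced generating functions $S_0,S_1$ of Proposition~\ref{P2generating}, and then to pin down the constant by comparing graph selectors via Proposition~\ref{ident-u}.

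\emph{Step 1: what $S\circ i_S^{-1}$ is on the slices.} As in the proof of Proposition~\ref{P2generating}, $\dot\eta(0)=\dot\eta(1)=0$ gives $K(\cdot,0)=K(\cdot,1)=0$, hence by \eqref{flow_ck} one has $\cg\cap\{\tau=0\}=\{(q,p,0,0):(q,p)\in\cl\}$ and, using invariance of $\cl$ under $\varphi_H^{0,1}$, also $\cg\cap\{\tau=1\}=\{(q,p,1,0):(q,p)\in\cl\}$. Consequently, for $(q,p)\in\cl$ and $j\in\{0,1\}$ the point $i_S^{-1}(q,p,j,0)\in\Sigma$ has the form $(q,j,\xi)$ with $\partial S/\partial\xi(q,j,\xi)=0$, $\partial S/\partial q(q,j,\xi)=p$, and $\partial S/\partial\tau(q,j,\xi)=0$; the last equality is automatic, since any $(q,\xi)\in\Sigma_{S_j}$ produces $(q,j,\xi)\in\Sigma$, so $i_S(q,j,\xi)\in\cg$, and every point of $\cg$ with $\tau=j$ has vanishing $E$-coordinate. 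Thus $i_S^{-1}(q,p,j,0)$ is identified with $i_{S_j}^{-1}(q,p)\in\Sigma_{S_j}$, whence $S\circ i_S^{-1}(q,p,j,0)=S_j\bigl(i_{S_j}^{-1}(q,p)\bigr)=:h_j(q,p)$. Since on $\Sigma_{S_j}$ one has $d(S_j)=\tfrac{\partial S_j}{\partial q}dq=i_{S_j}^*\lambda$ (because $\tfrac{\partial S_j}{\partial\xi}=0$ there), the function $h_j:\cl\to\R$ satisfies $dh_j=\lambda_{|\cl}$; hence $h_1-h_0$ is locally constant, and since $\cl$ is connected (being diffeomorphic to the connected manifold $M$) we get $h_1-h_0\equiv c'$ for some constant $c'$. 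It remains to show $c'=-c$.

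\emph{Step 2: identifying the constant.} Applying Proposition~\ref{geneselec} with base manifold $M$ to the special generating functions $S_0,S_1$ (which generate $\cl$ by Proposition~\ref{P2generating}), we get graph selectors $u_0,u_1:M\to\R$, of class $C^1$ on open full-measure sets $U_0',U_1'\subset M$, with $u_j(q)=S_j\circ i_{S_j}^{-1}(q,du_j(q))=h_j(q,du_j(q))$ there. On the other hand, for $z=(q,j)$ the fibrewise function $S_z$ coincides with $(S_j)_q$, and $S$ and $S_j$ have the same index $m$ and the same bound $N$ on critical values, so the very definition of the graph selector gives $u(q,0)=u_0(q)$ and $u(q,1)=u_1(q)$ for all $q$. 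Combining with Proposition~\ref{ident-u} yields $u_1=u_0-c$, hence $du_1=du_0$ a.e. Choosing $q\in U_0'\cap U_1'$ (nonempty, being full measure) and setting $p=du_0(q)$, we obtain, using $du_1(q)=du_0(q)$,
\begin{equation*}
h_1(q,p)-h_0(q,p)=h_1(q,du_1(q))-h_0(q,du_0(q))=u_1(q)-u_0(q)=-c,
\end{equation*}
and since $(q,p)\in\cl$ and $h_1-h_0\equiv c'$, this gives $c'=-c$. Putting the two steps together, for every $(q,p)\in\cl$
\begin{equation*}
S\circ i_S^{-1}(q,p,1,0)-S\circ i_S^{-1}(q,p,0,0)=h_1(q,p)-h_0(q,p)=c'=-c,
\end{equation*}
which is exactly \eqref{eqgene}.

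The delicate point is Step~1: one must check carefully that restricting $S\circ i_S^{-1}$ to the slices $\{\tau=0\}$ and $\{\tau=1\}$ of $\cg$ really yields the actions $h_0$ of $S_0$ and $h_1$ of $S_1$ — in particular that the $E$ (equivalently $\partial_\tau S$) coordinate vanishes on those slices, which rests on $\dot\eta(0)=\dot\eta(1)=0$ and on the invariance of $\cl$ — and that the slice-wise sublevel sets of $S$ at $\tau=j$ literally coincide with those of $S_j$, so that $u(\cdot,j)$ is the graph selector of $S_j$. Once these identifications are in place, everything else is bookkeeping and the constant is forced by Proposition~\ref{ident-u}.
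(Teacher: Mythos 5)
Your proof is correct and follows essentially the same route as the paper: both arguments first observe that $(q,p)\mapsto S\circ i_S^{-1}(q,p,j,0)$ for $j=0,1$ are two primitives of the Liouville form $\lambda$ on the connected manifold $\cl$, hence differ by a constant, and then pin that constant down by evaluating at a generic point where the graph selectors $u(\cdot,0)$ and $u(\cdot,1)$ of $S_0$ and $S_1$ are differentiable, using Proposition~\ref{ident-u}. Your Step~1 merely makes explicit some slice identifications (vanishing of the $E$-coordinate at $\tau=0,1$, and $u(\cdot,j)=u_j$) that the paper leaves implicit.
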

 
\begin{proof}
As $S(q, 0; \xi)$ and $S(q, 1; \xi)$ are two generating functions for $\cl$, the functions $(q, p)\in \cl\mapsto S\circ i^{-1} (q, p, 1, 0)$ and $(q, p)\in \cl\mapsto S\circ i^{-1} (q, p, 0, 0)$  are two primitive on $\cl$ of the Liouville $1$-form $\lambda$.  Hence their difference is a constant. \\
Moreover, $u(., 0)$ and $u(., 1)$ are two graph selectors for $\cl$ so that $u(., 0)-u(., 1)=c$. Hence there exists a dense open subset $V_0$ of $M$ with full Lebesgue measure such that for $i=1, 2$
$$\forall q\in V_0, (q, d_qu(q, 0))=(q, d_qu(q, 1))\in \cl\quad{\rm and}\quad u(q, i)=S\circ i_S^{-1}(q, d_qu(q, i), i, 0).$$
Take $q\in V_0$. By Proposition \ref{geneselec}, we have for $(q, p)=(q, d_qu(q, 0))=(q, d_qu(q, 1))\in\cl$ that
$$S\circ i_S^{-1}(q, p, 1, 0)=u(q, 1)=u(q, 0)-c=S\circ i_S^{-1}(q, p, 0, 0)-c.$$

\end{proof}
%
%
%
%        CONSTRUCTION OF A DOMINATED FUNCTION
%
%
\section{Construction of a dominated function.}\label{domin}
In this section, we come back  to the original problem, and construct what is called a {\sl dominated function} for the Lagrangian $L$ that is associated to $H$, where we recall the definition of the Lagrangian that we gave in the introduction.

\begin{nota}
The {\em Lagrangian} $L:TM\times \T\rightarrow \R$ is the function that is associated to $H$ via the Legendre duality.

$$\forall (q, v, t)\in TM\times \T, L(q, v, t)=\inf_{p\in T^*_qM}(p.v-H(q, p, t)).$$
\end{nota}
We recall that $L$ is as regular as $H$ is, $C^2$-convex in the fiber direction and superlinear in the fiber direction (see e.g. \cite{Fa1}).

\begin{defi}
A function $U:\cm_1=M\times \T\rightarrow \R$ is {\em dominated} by $L+c$ if it is Lipschitz and if for every continuous and piecewise $C^1$ arc $\gamma:[a, b]\rightarrow M$, we have
$$U(\gamma (b), b)-U(\gamma (b), b)\leq \int_a^b(L(\gamma (t), \dot\gamma (t), t)+c)dt.$$
\end{defi}

The goal of this section is to build a function $\mathfrak{u}$ that is dominated by $L+c$ and to prove some properties for this function.

Then, in the last section, we will prove that 
$\ug$ is everywhere differentiable and that $\cl$ is contained in the graph of $q\mapsto d\ug(q, 0)$. After that, we will prove that $d\ug$ is $C^1$.

\subsection{Construction of a dominated function.} Let us introduce a notation.
\begin{nota}
We define $\ug : M\times [0, 1]\rightarrow \R$ by $\ug(q, t)= u(q,\eta^{-1}(t))+ct$.
\end{nota}
Observe that a consequence of Proposition \ref{ident-u} is that $\ug(., 0)=\ug(., 1)$. Hence we can consider $\ug$ as a function defined on $\cm_1=M\times \T$. 
\medskip

%Let $u: \cm_2\rightarrow \R$ be the graph selector for $\cg$  that we constructed in the previous section, and let us define the function  ${{\mathfrak u}}(q,t)=u(q,\eta^{-1}(t))+ct$, for $q\in M$ and $t\in [0,1]$.  Since $u(q,1)-u(q,0)=-c$ we get that $\ugoth(q,1)=\ugoth(q,0)$, hence we can  extend $\ugoth$ to a one periodic in time function defined by setting $\ugoth(q,t)=\ugoth(q,t-\lfloor t \rfloor)$ for any $t\in\R$, and we can  consider $\ugoth$ as a function defined on $M\times \T_1$. The goal of this  section is to show that $\ugoth$ is dominated by $L+c$, where  $L: TM\times \T_1\rightarrow \R$ is the Lagrangian function associated  to the Hamiltonian $H$ by Legendre transformation. We will see that $c$ is in fact the  Ma\~{n}e's critical value of $L$.  

\begin{proposition}
The function $\ug$ is Lipschitz and dominated by $L+c$.
\end{proposition}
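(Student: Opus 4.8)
The plan is to establish the two defining properties of a dominated function — Lipschitz regularity and the domination inequality — separately, deriving everything from the properties of the graph selector $u$ proved in Propositions \ref{geneselec} and \ref{ident-u}, and from the Legendre duality relating $H$ and $L$.

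\textbf{Lipschitz regularity.} The function $u:\cm_2\to\R$ is Lipschitz by Proposition \ref{geneselec}, and $\eta^{-1}:[0,1]\to[0,1]$ is the inverse of a smooth function with $\dot\eta>0$ on $(0,1)$ but $\dot\eta(0)=\dot\eta(1)=0$, so $\eta^{-1}$ is \emph{not} Lipschitz near $t=0$ and $t=1$. This is the main obstacle: the naive composition $u(q,\eta^{-1}(t))$ could have infinite slope at the endpoints. The way around it is to use that, near the times $t=0$ and $t=1$, the graph selector $u(\cdot,\tau)$ has vanishing $\tau$-derivative where it is differentiable — indeed, on the full-measure set $U_0$ we have $J_u(z)=(q,d_qu,\tau,\partial u/\partial\tau)\in\cg$, and $\cg$ is built from integral curves of $X_\ck$; at $\tau=0$ and $\tau=1$ the $E$-component of points of $\cg$ vanishes (as recorded in the proof of Proposition \ref{P2generating}, where $\cg=\{\phi_\ck^t(q,p,0,0)\}$ and $X_\ck=(0,0,1,0)$ there, using $\dot\eta(0)=\ddot\eta(0)=\dot\eta(1)=\ddot\eta(1)=0$), hence $\partial u/\partial\tau\to 0$ as $\tau\to 0,1$. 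More precisely, since $E=-K(q,p,\tau)=-\dot\eta(\tau)H(q,p,\eta(\tau))$ on the relevant curves and $\dot\eta$ vanishes to second order at $0$ and $1$, one gets $|\partial u/\partial\tau(q,\tau)|\le C\dot\eta(\tau)$ near the endpoints. Then $\frac{\partial}{\partial t}u(q,\eta^{-1}(t)) = \frac{\partial u}{\partial\tau}(q,\eta^{-1}(t))\cdot\frac{1}{\dot\eta(\eta^{-1}(t))}$ stays bounded, and adding the smooth term $ct$ keeps things Lipschitz. One also checks the bound in the $q$-direction is uniform since $u$ is globally Lipschitz on $\cm_2$; I would assemble these into a global Lipschitz estimate for $\ug$ on $M\times[0,1]$, which descends to $\cm_1$ since $\ug(\cdot,0)=\ug(\cdot,1)$ by Proposition \ref{ident-u}.

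\textbf{Domination.} For a piecewise $C^1$ arc $\gamma:[a,b]\to M$ I want $\ug(\gamma(b),b)-\ug(\gamma(a),a)\le\int_a^b(L(\gamma,\dot\gamma,t)+c)\,dt$. By density and continuity it suffices to treat arcs whose image avoids, for a.e. $t$, the bad set $\cm_2\setminus U_0$ — more carefully, since $u$ is Lipschitz it is differentiable a.e. along the curve $t\mapsto(\gamma(t),\eta^{-1}(t))$, and one can integrate $\frac{d}{dt}u(\gamma(t),\eta^{-1}(t))=d_qu\cdot\dot\gamma+\frac{\partial u}{\partial\tau}\cdot\frac{1}{\dot\eta}$ using the fundamental theorem of calculus for Lipschitz functions. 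At a.e. time, $J_u(\gamma,\eta^{-1}(t))=(\gamma,d_qu,\eta^{-1}(t),\partial u/\partial\tau)\in\cg$, and since $\cg$ lies on the energy level $\ck=0$, i.e. $\partial u/\partial\tau = E = -K(\gamma,d_qu,\eta^{-1}(t)) = -\dot\eta(\eta^{-1}(t))H(\gamma,d_qu,\eta(\eta^{-1}(t)))$. Substituting, the change of variable $s=\eta^{-1}(t)$ converts $\frac{d}{dt}u(\gamma(t),\eta^{-1}(t))\,dt$ into $\langle d_qu,d\gamma\rangle - H(\gamma,d_qu,\cdot)\,ds$ along the reparametrized curve; combined with $\frac{d}{dt}(ct)$ and the Legendre inequality $p\cdot v - H(q,p,s)\le L(q,v,s)$ applied pointwise with $p=d_qu$, this yields exactly the domination bound after undoing the reparametrization. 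The one point needing care is that the reparametrization by $\eta^{-1}$ is a bijection $[0,1]\to[0,1]$ that is smooth on the interior with integrable derivative, so the change of variables in the integral is legitimate; since all arcs may be cut at the points where $\eta^{-1}(t)\in\{0,1\}$ and $\T$ is covered by such intervals, the general case follows by additivity over subintervals and by the periodicity identity $\ug(\cdot,0)=\ug(\cdot,1)$.

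I expect the endpoint behavior of $\eta^{-1}$ to be the crux: everything else is a bookkeeping of Legendre duality plus the fact that $\cg$ sits on a fixed energy level of $\ck$. Establishing the estimate $|\partial u/\partial\tau|\le C\dot\eta$ near $\tau=0,1$ — which uses the flatness conditions $\dot\eta(0)=\ddot\eta(0)=\dot\eta(1)=\ddot\eta(1)=0$ in an essential way — is what makes both the Lipschitz bound and the change of variables in the domination integral go through.
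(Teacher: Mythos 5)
Your proposal follows the same route as the paper: both halves hinge on the chain-rule computation for $u(q,\eta^{-1}(t))+ct$, the graph-selector identity $\frac{\partial u}{\partial\tau}=-K=-\dot\eta\,H$ on $\cg$ (which cancels the singular factor $1/\dot\eta(\eta^{-1}(t))$), Young's inequality, and the change of variables $s=\eta(t)$. There is, however, a genuine gap in how you pass from ``almost everywhere'' information to the actual estimates, and it occurs in both halves.

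For the domination inequality you write that ``since $u$ is Lipschitz it is differentiable a.e.\ along the curve $t\mapsto(\gamma(t),\eta^{-1}(t))$''. This does not follow from Rademacher's theorem: the curve has measure zero in $\cm_2$, so a.e.\ differentiability of $u$ on $\cm_2$ says nothing about points of the curve. What is true is that the composite $t\mapsto u(\gamma(t),\eta^{-1}(t))$ is absolutely continuous, hence differentiable for a.e.\ $t$; but at such a $t$ you cannot decompose its derivative as $d_qu\cdot\dot\gamma+\partial_\tau u\cdot(\eta^{-1})'$, and --- more seriously --- you cannot invoke $J_u\in\cg$, which is the step that produces $\partial_\tau u=-K$ and hence the Legendre bound; that identity is only available on $U_0$. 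So the reduction you state first (``it suffices to treat arcs whose graph lies a.e.\ in $U_0$'') is the right one, but the $C^1$-density of such arcs is itself a nontrivial claim: the paper proves it (Lemma \ref{arg-Fubini}) by straightening a tubular neighbourhood of the graph of the curve so that its second coordinate is the time, and then applying Fubini to select good slices $\xi_n\to0$. Without this lemma or an equivalent, the domination argument does not close. The same issue reappears in the Lipschitz half: a bounded derivative on a full-measure set does not by itself give a Lipschitz estimate (the Cantor function has derivative $0$ a.e.); one needs absolute continuity of $t\mapsto\ug(q,t)$ (which the paper gets from the absolute continuity of $\eta^{-1}$, whose derivative is positive and integrable on $(0,1)$) together with a second Fubini selection of points $q_n\to q$ for which $(q_n,s)\in\cu_0$ for a.e.\ $s$, so that the fundamental theorem of calculus applies on those lines before passing to the limit. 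Your ``assemble these into a global Lipschitz estimate'' is exactly where this work lives. A minor further point: the bound $|\partial u/\partial\tau|\le C\dot\eta$ holds on all of $U_0$, not only near the endpoints, and needs only $E=-\dot\eta H$ plus compactness of $\cg$; the second-order flatness $\ddot\eta(0)=\ddot\eta(1)=0$ is used elsewhere (to make $S_0$ and $S_1$ generating functions in Proposition \ref{P2generating}), not here.
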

\begin{proof}
We postpone the proof that $\ug$ is Lipschitz after the proof of the domination property, but we use the fact that $\ug$ is Lipschitz in the first part of our proof.\\

{\sl The domination property}\\
Let $\gamma:[a, b]\rightarrow M$ be a $C^1$ arc with $[a, b]\subset (0, 1)$ and assume that  the image of $t\in [a, b]\mapsto (\gamma\circ \eta (t), t)\in \cm_1$ is   Lebesgue almost everywhere in $U_0$ ($U_0$ was defined in Proposition \ref{geneselec}). Then

$$\ug(\gamma (b), b)-\ug(\gamma(a),a)=u(\gamma(b), \eta^{-1}(b))-u(\gamma(a), \eta^{-1}(a))+c(b-a),$$
and  if we use the notation $\delta=u(\gamma(b), \eta^{-1}(b))-u(\gamma(a), \eta^{-1}(a))$
$$\delta=\int_{\eta^{-1}(a)}^{\eta^{-1}(b)}\left(d_qu(\gamma(\eta(t)), t)\dot\gamma(\eta(t))\dot\eta(t)+\frac{\partial u}{\partial t}(\gamma\circ \eta(t), t)\right)dt.$$
Young inequality for dual convex functions tells us that
$$\forall p\in T^*_qM, \forall v\in T_qM, \forall t\in \T, p.v\leq H(q, p,t)+L(q, v,t).$$
Hence  we have $\delta\leq $\\
$$\int_{\eta^{-1}(a)}^{\eta^{-1}(b)}\left[\dot\eta(t)\big(H(\gamma(\eta(t)),d_qu(\gamma(\eta(t)),t),\eta(t))+L(\gamma(\eta(t)),\dot\gamma(\eta(t)),\eta(t))\big)+\frac{\partial u}{\partial t}(\gamma\circ \eta(t), t)\right]dt.$$
Proposition  \ref{geneselec} tells us that $u$ is a graph selector for $\cg$ above $U_0$. We can therefore replace in the integral
$\frac{\partial u}{\partial t}(\gamma\circ \eta(t), t)$ by
$-K(\gamma(\eta(t)), p(t), t)=-\dot\eta(t)H(\gamma(\eta(t)), p(t), \eta(t))$,
where we set $p(t)=d_qu(\gamma(\eta(t),t)$. Using a change of variable $s=\eta(t)$
we obtain 
$$\delta \leq \int_{\eta^{-1}(a)}^{\eta^{-1}(b)}\dot\eta(t)L(\gamma(\eta(t)),\dot\gamma(\eta(t)),\eta(t))dt=\int_a^bL(\gamma(s), \dot\gamma(s), s)ds.$$
This gives the domination property
\begin{equation}
\label{eqdom}
\ug(\gamma (b), b)-\ug(\gamma(a),a)\leq \int_a^b(L(\gamma(s), \dot\gamma(s), s)+c)ds.\end{equation}
How can we conclude for general $\gamma: [a, b]\rightarrow M$ that are continuous and piecewise $C^1$?
\begin{itemize}
\item if  $[a,b]\subset (0, 1)$ and $\gamma$ is $C^1$, by Lemma
  \ref{arg-Fubini} below applied to
  $\gamma\circ \eta$,  we can approximate $\gamma$  in topology $C^1$
  by a sequence $(\gamma_n)_n$ such that $(\gamma_n\circ\eta(t),t)\in U_0$ for almost every
  $t\in [\eta^{-1}(a),\eta^{-1}(b)]$, hence the domination inequality holds for every $\gamma_n$.
  Taking now the limit $n\rightarrow +\infty$, we find that inequality (\ref{eqdom})
holds for our curve $\gamma$.
\item if $[a, b]\subset [0, 1]$ and $\gamma$ is $C^1$, we can find a decreasing sequence $(a_n)$ and an increasing sequence $(b_n)$ so that $(a, b)=\bigcup_{n\in\N}[a_n, b_n]$; then every $\gamma_{|[a_n, b_n]}$ is dominated and by taking a limit $\gamma$ is dominated;
\item for general $\gamma$, we can cut $\gamma$ in sub-arcs $\gamma_1, \dots, \gamma_n$ that are $C^1$ and defined on some intervals $I_k$ that are contained in some intervals $[n_k, n_k+1]$ with $n_k\in \Z$; then we have the domination property for every $\gamma_j$ and hence for their concatenation $\gamma=\gamma_1*\dots *\gamma_n$.
\end{itemize}
\vspace{1mm}  \noindent
\begin{lemma} \label{arg-Fubini}
  Given an interval $[\alpha,\beta]\subset (0,1)$, a set of full measure $U_0\subset \cm_1$ and a
  ${C}^1$ curve $\tau : [\alpha,\beta]\rightarrow M$, there exists a sequence of ${C}^1$
  curves $\tau_n :[\alpha,\beta]\rightarrow M$, $n\in \N$ such that $(\tau_n)_{n\in\N}$ converges to $\tau$
  in the ${C}^1$-topology, and for every $n\in \N$, $(\tau_n(t),t)\in U_0$ for almost every
  $t\in [\alpha,\beta]$.
\end{lemma}  
\begin{proof}
  Without loss of generality, we can assume that $\tau$ is defined in a slightly bigger interval
  $[\alpha^\prime,\beta^\prime]\subset (0,1)$ such that $[\alpha,\beta]\subset (\alpha^\prime,\beta^\prime)$.
  The curve
  $\sigma : [\alpha^\prime,\beta^\prime]\rightarrow \cm_1$, $\sigma(t)=(\tau(t),t)$ is a
  ${C}^1$-embedding, and it can be embedded in a tubular neighbourhood, that is to say,
  there exist a
  ${C}^1$-embedding 
  $\Lambda : [\alpha^\prime,\beta^\prime]\times {\mathcal O}\rightarrow \cm_1$,
  $\Lambda(t,\xi)=(\Gamma(t,\xi),T(t,\xi))$ such that $\Gamma(t,0)=\tau(t)$ and $T(t,0)=t$,
  where ${\mathcal O}$ is on open neighbourhood of $0$ in $\R^n$. Let us prove that it is always possible to find a
  tubular  neighbourhood $\tilde{\Lambda}$ of $\sigma$ so that
  $\tilde{\Lambda}(t,\xi)=(\tilde{\Gamma}(t,\xi),t)$. 
  Indeed,  
  let $F :[\alpha^\prime,\beta^\prime]\times {\mathcal O} \rightarrow \R\times {\mathcal O}$ be the map
  defined by $F(t,\xi)=(T(t,\xi),\xi)$. Since  $T(t,0)=t$, the differential 
   $D_{(t,0)} F$ is the identity. Eventually shrinking  ${\mathcal O}$ we can assume that $D_{(t,\xi)} F$
  is invertible for every  $(t,\xi)\in [\alpha^\prime,\beta^\prime]\times {\mathcal O}$. Since 
  the map $t\mapsto T(t,\xi)$ is ${C}^1$-close  to the identity for $\xi$ sufficiently small,
  hence injective, we can also assume that $F$ is injective, and therefore $F$ defines a
  diffeomorphism from $[\alpha^\prime,\beta^\prime]\times {\mathcal O}$ to a neighbourhood of
  $[\alpha,\beta]\times \{0\}$. By definition of $F$, if we set $\tilde{\Lambda}=\Lambda\circ F^{-1}$
  we get
  $\tilde{\Lambda}(t,\xi)=(\tilde{\Gamma}(t,\xi),t)$, where $\tilde{\Gamma}=\Gamma\circ F^{-1}$.
  Now $F([\alpha^\prime,\beta^\prime]\times {\mathcal O})$ is a neighbourhood of
  $[\alpha,\beta]\times\{0\}$, hence we can find an open neighbourhood of $0$ in $\R^n$, here denoted
  $\tilde{\mathcal O}$, such that
  $[\alpha,\beta]\times \tilde{\mathcal O}\subset F([\alpha^\prime,\beta^\prime]\times {\mathcal O})$.
  Since $\tilde{\Lambda}$ is a ${C}^1$-diffeomorphism, the set
  $V_0=\tilde{\Lambda}^{-1}(U_0\cap \tilde{\Lambda}([\alpha,\beta]\times\tilde{\mathcal O}))$ has full
  measure in
  $[\alpha,\beta]\times \tilde{\mathcal O}$, and by Fubini Theorem, for almost every
  $\xi\in \tilde{\mathcal O}$, the set of  $t\in [\alpha,\beta]$ such that $(t,\xi)\in V_0$ has
  full measure in $[\alpha,\beta]$, therefore, we can
  find a
  sequence $(\xi_n)_n$ in $\tilde{\mathcal O}$ such that $\xi_n\rightarrow 0$ and for almost every
  $t\in [\alpha,\beta]$
  we have $(\tilde{\Gamma}(t,\xi_n),t)\in U_0$. By defining $\tau_n(t)=\tilde{\Gamma}(t,\xi_n)$ we have
  the desired property.
\end{proof}
\medskip

{\sl The Lipschitz property}\\

Let us remark that $\eta^{-1} : [0,1]\rightarrow [0,1]$ is an absolutely continuous function.  Indeed, it is a 
${C}^\infty$ function on the open interval $(0,1)$, and if we set $g(t)=(\eta^{-1})^\prime(t)$ for $t\in (0,1)$, for every 
segment $[a,b]\subset (0,1)$ we have 
\begin{equation} \label{abs-cont}
\eta^{-1}(b)-\eta^{-1}(a)=\int_a^b g(t)\, dt,
\end{equation}  \\
and by construction of $\eta$ we know that $g(t)>0$ for $t\in (0,1)$. By continuity of $\eta^{-1}$, if we take the limits 
$a\rightarrow 0$ and $b\rightarrow 1$, we find that $g$ is absolutely integrable on $(0,1)$, and identity (\ref{abs-cont}) 
holds for every $[a,b]\subset [0,1]$, hence $\eta^{-1}$ is absolutely continuous.   
As $u$ is Lipschitz, the function $\ug$ that we defined by $\ug(q, t)= u(q,\eta^{-1}(t))+ct$ is (uniformly) absolutely continuous 
in the $t$-direction and (uniformly) Lipschitz in the $q$ direction. Hence, to prove that $\ug$ is Lipschitz, we just have to prove that its derivative, which is defined Lebesgue almost everywhere, is bounded on a set with full Lebesgue measure.

Observe that for every segment $[a,b]\subset (0, 1)$, the map
$\eta\left|_{[a,b]}\right. : [a,b] \rightarrow [\eta(a),\eta(b)]$ is a bi-lipschitz homeomorphism;
we deduce that the set
\begin{equation} \label{dfu0}
\cu_0=\{ (q, \eta(t))); (q, t)\in U_0\cap (M\times (0, 1))\}
\end{equation}
has full Lebesgue measure in $\cm_1=M\times \T$.
For $(q, t)\in \cu_0$, we have $(q, \eta^{-1}(t))\in U_0$ and then $(q, \eta^{-1}(t), du(q, \eta^{-1}(t)))\in\cg$. This implies that $d_qu(q, \eta^{-1}(t))$ is (uniformly) bounded on $\cu_0$ and 
$$\frac{\partial u}{\partial t}(q, \eta^{-1}(t))=-K(q, d_qu(q, \eta^{-1}(t)), t)=-\dot\eta(\eta^{-1}(t))H(q, d_qu(q, \eta^{-1}(t)), t).$$
We deduce that
 
\begin{equation}d\ug(q,t)(\d q, \d t)= d_qu(q, \eta^{-1}(t))\d q+\frac{1}{\dot\eta(\eta^{-1}(t))}\frac{\partial u}{\partial t}(q, \eta^{-1}(t))\d t\end{equation}
is equal to 
$$d\ug(q,t)(\d q, \d t)= d_qu(q, \eta^{-1}(t))\d q-H(q, d_qu(q, \eta^{-1}(t)), t)\d t
$$
and thus $d\ug$ is bounded above $\cu_0$.\\
Let us now conclude.
Given now two points $(q,t)$ and $(q^\prime,t^\prime)\in \cm_1$, we have
\begin{equation} \label{lip-ug}
\begin{array}{rl}
  |\ug(q^\prime,t^\prime)-\ug(q,t)|\le |&\ug(q^\prime,t^\prime)-\ug(q,t^\prime)|+|\ug(q,t^\prime)-\ug(q,t)| \\
  &\le A\, dist(q^\prime,q)+|\ug(q,t^\prime)-\ug(q,t)|
%  \displaystyle \int_t^{t^\prime} \left| \frac{\partial \ug}{\partial t}(q,s)\right|\, ds,
\end{array}
\end{equation}
where $dist(\, ,\,)$ is a Riemannian distance on $M$, $A$ is a positive constant independent from $(q,t)$
and $(q^\prime,t^\prime)$. By an argument similar to
the one given in proof of Lemma \ref{arg-Fubini}, and eventually cutting the segment $s\mapsto (q,s)$
in a finite number of pieces, we can find a sequence of points $(q_n)_n$ in $M$ converging to $q$ and
such that for every $n\in \N$, the point $(q_n,s)$ is in $\cu_0$ for almost every
$s\in [t,t^\prime]$ (without loss of generality we assume $t<t^\prime$). Since we know that $d\ug$ is
bounded above $\cu_0$ we find
$$
|\ug(q_n,t^\prime)-\ug(q_n,t)| \le \displaystyle
\int_t^{t^\prime} \left| \frac{\partial \ug}{\partial t}(q_n,s)\right|\, ds \le B\, |t^\prime-t|,
$$
for some constant $B>0$. Taking now the limit $n\rightarrow +\infty$ and replacing in (\ref{lip-ug})
we finish the proof. 
%Hence $\ug$ is Lipschitz.
\end{proof}
\subsection{The dominated function $\ug$ can be seen as a kind of graph selector}

In this part, we construct an extended Hamiltonian of $H$ and an extended Lagrangian submanifold $\cy$ of $\cl$ by using $H$. We will prove that in some sense, $\ug$ is a graph selector for $\cy$.

\begin{nota} We introduce the autonomous Hamiltonian
$\ch$ on 
$T^*\cm_1=T^*M\times \T_1\times \R$ that is defined by 
$$
\ch(q,p,\tau,e)=H(q,p,\tau)+e.
$$ 
\end{nota}
The Hamiltonian equations for $\ch$ are
\begin{equation} \label{champ_hamiltonien_ch}
\left\{
\begin{array}{rlrl}
\frac{dq}{dt}&=\frac{\partial H}{\partial p}(q,p,\tau), \qquad 
&\frac{d\tau}{dt}&=1, \\
\frac{dp}{dt}&=-\frac{\partial H}{\partial q}(q,p,\tau), \qquad
&\frac{de}{dt}&=-\frac{\partial H}{\partial \tau}(q,p,\tau),
\end{array}
\right.
\end{equation}
and the flow of (\ref{champ_hamiltonien_ch}) is given by 
\begin{equation} \label{flow_ch}
\phi_\ch^t(q,p,\tau,e)=(\varphi_H^{\tau,\tau+t}(q,p),\tau+t,
e+H(q,p,\tau)-H(\varphi_H^{\tau,\tau+t}(q,p),\tau+t)). 
\end{equation}
If we denote by $F_E(q, p, \tau, e)=(q, p, \tau, e+E)$ the translation in the energy direction by $E$, observe that $F_E\circ \phi_\ch^t=\phi_\ch^t\circ F_E$. Hence the restriction of $(\phi_\ch^t)$ to every level $\{Ê\ch=E\}$ is conjugated (via $F_E$) to the restriction of $(\phi_\ch^t)$ to the zero level $\{ \ch=0\}$.

%We can see that up to a constant on the variable $e$,  the flow of the Hamiltonian vector field of $\ch$ is  the same on every energy value of $\ch$. 

Similarly to what we did in 
the previous section for the construction of $\cg$, we now extend  $\cl$ to a 
Lagrangian submanifold $\cy$ of $T^*\cm_1$ invariant by the 
flow $(\phi_{\ch}^t)$. The only change is that we choose the lift in such a way that $\cy\subset \{ \ch=c\}$ for the constant $c$ that we introduced in Proposition \ref{ident-u} and Corollary \ref{identit-S}.

%The only way to do this is to saturate with orbits of  (\ref{champ_hamiltonien_ch}), but we have the choice on the value of $H+e$. 
%We choose $\ch=c$, and we define 
$$
\cy=  \{ \phi_\ch^t (q,p,0,-H(q,p,0)+c); (q,p)\in \cl,  \ t\in [0,1]\}.
$$  
Since $\cl$ is invariant by $\varphi_H^{0,1}$,   $\cy$ 
is a closed submanifold of 
$T^*\cm_1$. Observe that  $\cy$ is contained in the energy level 
$\{ \ch=c\}$. 
\begin{proposition}\label{PLiouvilleprim}
The manifold $\cy$ is exact Lagrangian, i.e. the Liouville 1-form $\theta=<p,dq>+Ed\tau$ has a primitive $\cs$ along $\cy$.
\end{proposition}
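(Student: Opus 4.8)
The plan is to exhibit an explicit primitive of $\theta_{|\cy}$, given by the Lagrangian action along the orbits of $\phi_\ch^t$. Recall from the proof of Corollary~\ref{identit-S} that $f_0(q,p):=S\circ i_S^{-1}(q,p,0,0)$ is a $C^1$ primitive of the Liouville form $\lambda$ on $\cl$. I parametrize $\cy$ by the map
$$\Psi:\cl\times[0,1]\to\cy,\qquad \Psi(q,p,t)=\phi_\ch^t\big(q,p,0,-H(q,p,0)+c\big).$$
Using (\ref{flow_ch}), the $1$-periodicity of $H$ and the invariance of $\cl$ by $\varphi_H^{0,1}$, one checks that $\Psi(q,p,1)=\Psi(\varphi_H^{0,1}(q,p),0)$ and that $\Psi$ is an embedding on $\cl\times[0,1)$, so that $\cy$ is the mapping torus of the restriction of $\varphi_H^{0,1}$ to $\cl$. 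Since $\cy\subset\{\ch=c\}$, along each curve $s\mapsto\Psi(x,s)$ the energy variable equals $e=c-H$, hence, by Legendre duality,
$$\theta(X_\ch)\big(\Psi(x,s)\big)=\Big\langle p,\frac{\partial H}{\partial p}\Big\rangle+e=L\big(\gamma_x(s),\dot\gamma_x(s),s\big)+c,$$
where $\gamma_x(s)=\pi(\varphi_H^{0,s}(x))$ is the projected Hamiltonian orbit through $x$. I then set, for $(x,t)\in\cl\times[0,1]$,
$$\tilde{\cs}(x,t)=f_0(x)+\int_0^t\theta(X_\ch)\big(\Psi(x,s)\big)\,ds .$$

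First I would show that $\Psi^*\theta=d\tilde{\cs}$. As in the proof of Lemma~\ref{L1}, Cartan's formula together with $d\theta=-\Omega$ and $\iota_{X_\ch}\Omega=d\ch$ gives $L_{X_\ch}\theta=d(\iota_{X_\ch}\theta-\ch)$, so $(\phi_\ch^t)^*\theta-\theta=dg_t$ with $g_t=\int_0^t(\iota_{X_\ch}\theta-\ch)\circ\phi_\ch^s\,ds$. Writing $j_0(q,p)=(q,p,0,-H(q,p,0)+c)$, one has $j_0^*\theta=\lambda_{|\cl}=df_0$; pulling back by $j_0$ and using $\ch\equiv c$ on $\cy$, the restriction of $\Psi^*\theta$ to $T_x\cl$ at fixed $t$ equals the $x$–differential of $\tilde{\cs}(\cdot,t)$, while its $\partial_t$–component is $\theta(X_\ch)(\Psi(x,t))=\partial_t\tilde{\cs}$; hence $\Psi^*\theta=d\tilde{\cs}$, and $\tilde\cs$ is $C^1$.

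The heart of the matter is to check that $\tilde{\cs}$ descends to $\cy$, i.e. $\tilde{\cs}(x,1)=\tilde{\cs}(\varphi_H^{0,1}(x),0)$ for every $x\in\cl$; by the formula for $\theta(X_\ch)$ above this amounts to
$$f_0\big(\varphi_H^{0,1}(x)\big)-f_0(x)=\int_0^1 L\big(\gamma_x(s),\dot\gamma_x(s),s\big)\,ds+c .$$
To prove this I would compare with the flow of $\ck$ on $\cg$. On $\cg$ the function $S\circ i_S^{-1}$ is a primitive of $\Theta$, so for $z_0=(q_0,p_0,0,0)$ with $(q_0,p_0)\in\cl$,
$$S\circ i_S^{-1}\big(\phi_\ck^1(z_0)\big)-f_0(q_0,p_0)=\int_0^1\Theta(X_\ck)\big(\phi_\ck^s(z_0)\big)\,ds .$$
Since $K(\cdot,\cdot,0)\equiv 0$ the orbit $\phi_\ck^s(z_0)$ stays in $\{\ck=0\}$, and using $\frac{\partial K}{\partial p}(q,p,\tau)=\dot\eta(\tau)\frac{\partial H}{\partial p}(q,p,\eta(\tau))$ together with the change of variable $u=\eta(s)$ — exactly the computation already performed in Section~\ref{domin} — the right–hand side equals $\int_0^1 L(\gamma_{(q_0,p_0)}(u),\dot\gamma_{(q_0,p_0)}(u),u)\,du$. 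Moreover, by (\ref{flow_ck}) and the properties $\eta(0)=\dot\eta(0)=\dot\eta(1)=0$, $\eta(1)=1$, together with $\varphi_K^{0,1}=\varphi_H^{0,1}$, one gets $\phi_\ck^1(z_0)=(\varphi_H^{0,1}(q_0,p_0),1,0)$; and Corollary~\ref{identit-S} applied to the point $\varphi_H^{0,1}(q_0,p_0)\in\cl$ gives $S\circ i_S^{-1}(\varphi_H^{0,1}(q_0,p_0),1,0)=f_0(\varphi_H^{0,1}(q_0,p_0))-c$. Substituting yields precisely the required identity, so $\tilde{\cs}$ descends to a $C^1$ function $\cs$ on $\cy$ with $\theta_{|\cy}=d\cs$; hence $\cy$ is exact Lagrangian.

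I expect the delicate step to be this last matching of constants: the action of the $\ch$–flow over one period must coincide with the jump of the primitive $f_0$ along $\varphi_H^{0,1}$, and the constant $c$ cutting out the level $\{\ch=c\}\supset\cy$ has to be exactly the constant produced by the graph selector in Proposition~\ref{ident-u} and Corollary~\ref{identit-S} — which is precisely what the construction of Section~\ref{extension} was designed to guarantee (this is why the lift of $\cl$ was chosen so that $\cy\subset\{\ch=c\}$). The remaining verifications — that $\Psi^*\theta=d\tilde{\cs}$, the two Legendre–duality computations, and the regularity of $\cs$ — are routine, but require some care with the pull-back bookkeeping and with the reparametrization $\eta$.
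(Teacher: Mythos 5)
Your proof is correct, and it reaches the Proposition through a genuinely different organization of the same ingredients. The paper transports the primitive directly: it introduces $\psi(q,p,t,e)=(q,p,\eta^{-1}(t),\dot\eta(\eta^{-1}(t))(e-c))$, proves that $\psi$ maps a lift of $\cy$ homeomorphically onto $\cg\cap(T^*M\times[0,1]\times\R)$, sets $\cs=S\circ i_S^{-1}\circ\psi+ct$, and verifies $d\cs=\theta$ by a chain-rule computation; the periodicity across the seam is then exactly identity (\ref{eqgene}). You instead define the primitive as a Hamiltonian action, $\tilde\cs(x,t)=f_0(x)+\int_0^t\theta(X_\ch)$, which makes $\Psi^*\theta=d\tilde\cs$ immediate from Cartan's formula, and you concentrate all the difficulty in the seam identity $f_0(\varphi_H^{0,1}(x))-f_0(x)=\int_0^1(L+c)\,ds$; that identity you verify by running the analogous action computation on $\cg$ (where the reparametrization by $\eta$ converts the $\Theta(X_\ck)$-integral into the $L$-action, and $\dot\eta(0)=\dot\eta(1)=0$ gives $\phi_\ck^1(q,p,0,0)=(\varphi_H^{0,1}(q,p),1,0)$) and then invoking Corollary \ref{identit-S}. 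Both routes rest on the same two external inputs --- that $S\circ i_S^{-1}$ is a primitive of $\Theta$ on $\cg$, and the constant $c$ of Corollary \ref{identit-S} --- so neither is more elementary, but yours has a small structural advantage: since $\eta^{-1}$ does not appear in the definition of $\tilde\cs$, your primitive is smooth in $t$ up to $t=0,1$, and the $C^1$-matching of $\cs$ across the slice $\{\tau=0\}$ reduces to the remark that both one-sided differentials coincide with the continuous form $\theta_{|T\cy}$; the paper instead needs the longer local-primitive continuity argument to remove the apparent singularity of $\eta^{-1}$ at the endpoints. You should still write out that matching step rather than merely asserting that $\cs$ is $C^1$, but in your setup it is a one-line verification.
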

\begin{proof}
Let $\tilde\cy$ be the set 
$\tilde\cy=\{ (q, p, t ,e)\in T^*M\times [0, 1]\times \R; (q, p, [t]_1, e)\in \cy\}$.

We define the map $\psi: T^*M\times [0,1]\times\R \rightarrow T^*\cm_2$ by
\begin{equation}\psi (q, p, t, e)=(q, p, \eta^{-1}(t), \dot\eta(\eta^{-1}(t))(e-c)).\end{equation}

\begin{lemma} \label{homeo-psi}
  $\psi\left|_{\tilde\cy}\right.$ is an homeomorphism from $\tilde \cy$ onto
  $\cg\cap (T^*M\times [0, 1]\times \R)$.
\end{lemma}
\begin{proof}
  Let $(q, p, t, e)\in \tilde \cy$. This means that $e=c-H(q, p, t)$ and $\varphi_H^{t, 0}(q, p)\in \cl$.
  Then $\psi (q, p, t, c-H(q, p, t))=(q, p, \eta^{-1}(t), -K(q, p, \eta^{-1}(t)))$ with $\varphi_K^{\eta^{-1}(t), 0}(q, p)\in\cl$. Hence $\psi (\tilde\cy)$ is $\cg\cap (TM\times [0, 1]\times \R)$.\\
The continuity and injectivity are straightforward.
\end{proof}

We define then $s_0$ by
$$s_0(q, p, t, e)=S\circ i_S^{-1}\circ\psi(q, p,t, e)+ct.$$
%$$s_0(q, p, t, e)=S\circ i_S^{-1}(q, p, \eta^{-1}(t), \dot\eta(\eta^{-1}(t))(e-c))+ct.$$
Because of equality (\ref{eqgene}), we have
$$s_0(q, p, 1, e)=S\circ i_S^{-1}(q, p, 1,0)+c=S\circ i_S^{-1}(q, p, 0, 0)=s_0(q, p, 0, e).$$
 Hence we can define $\cs: \cy\rightarrow \R$ by $\cs(q, p, [t]_1, e)=s_0(q, p, t, e)$.

This function $\cs$ is continuous on $\cy$ and is differentiable except on the slice $\cy\cap \{t=0\}$. We have $d\cs (q, p, t,e)(\d q, \d p, \d t, \d e)= $
$$d(S\circ i_S^{-1})(q, p, \eta^{-1}(t), \dot\eta(\eta^{-1}(t))(e-c))(\delta q, \delta p, \frac{1}{\dot\eta(\eta^{-1}(t))}\delta t, \delta E)+c\delta t,$$
with $\delta E=\dot\eta (\eta^{-1}(t))\delta e+(e-c)\frac{\ddot \eta(\eta^{-1}(t))}{\dot \eta(\eta^{-1}(t))}\delta t$.\\
As $S\circ i_S^{-1}$ is a primitive of the Liouville 1-form $\Theta=<p, dq>+Ed\tau$, we deduce that 
$$d\cs (q, p, t,e)(\d q, \d p, \d t, \d e)= <p, \delta q>+\dot\eta (\eta^{-1}(t))(e-c)\frac{\delta t}{\dot\eta(\eta^{-1}(t))}+c\delta t=<p, \delta q>+e\delta t.$$
Hence $\cs$ is continuous on $\cy$ and is a primitive of $\theta$ on $\cy\backslash \{ t=0\}$. 

As $\cy$ is Lagrangian, a primitive of $\theta$ along $\cy$ exists always locally and is $C^1$. Then for every point in $\cy$, there exists a connected open neighborhood $\cv$ on which $\theta$ has a $C^1$ primitive $s$. Without loss of generality we can assume that $\cv\backslash\{ t=0\}$ is made by one or two  (open) connected 
components $\cv_1$, $\cv_2$ (that may be equal). Observe that $\overline{\cv_1}\cap \overline{\cv_2}\not=\emptyset$. On each of these open and connected components $\cv_i$, $\cs-s$ is differentiable with its differential equal to $0$, hence $(\cs-s)_{|\cv_i}$ is  equal to a constant $c_i$. As $\cs-s$ is continuous, we have also $(\cs-s)_{|\overline{\cv_i}}=c_i$. As $\overline{\cv_1}\cap \overline{\cv_2}\not=\emptyset$, we have $c_1=c_2$ ad then  $\cs-s$ is constant on $\cv$, therefore $\cs$ is $C^1$ everywhere and is a primitive of the Liouville 1-form $\theta$.
 
\end{proof}
As the exact Lagrangian $\cg$ has a graph selector, the same is true for $\cy$.
\begin{proposition} \label{u-lipschitz-select}
  The function $\ug$ is differentiable at every $z=(q,t)\in \cu_0$, where
  $\cu_0$ is the open subset defined in (\ref{dfu0})
  and moreover
\begin{equation} \label{kooks}
\forall z\in \cu_0,\quad J_{\ug}(z)\in \cy \quad \text{and}\quad \ugoth(z)=\cs(J_{\ug}(z)),
\end{equation}
where $J_{\ug}(z)=(q,d_q \ug(z),t,\frac{\partial \ug}{\partial t}(z))\in
T^*M\times\T \times\R=T^*\cm_1$.
Moreover identity (\ref{kooks}) holds for every $z\in \cm_1$ where 
$\ugoth$ is differentiable and where $\ch(J_{\ug}(z))=c$.

\end{proposition}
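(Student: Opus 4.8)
The plan is to transfer, through the maps $\psi$ and $i_S$, the information already gathered in Proposition~\ref{geneselec} and Lemma~\ref{homeo-psi}, adding one sharpening of the graph–selector construction that uses the strict fiberwise convexity of $H$. First I treat $z=(q,t)\in\cu_0$. By (\ref{dfu0}) there is $\tau\in(0,1)$ with $t=\eta(\tau)$ and $(q,\tau)\in U_0$; since $u$ is $C^1$ near $(q,\tau)$ and $\eta^{-1}$ is smooth near $t$, the identity $\ug(q',t')=u(q',\eta^{-1}(t'))+ct'$ shows $\ug$ is $C^1$ near $z$, hence differentiable at $z$. Because $J_u(q,\tau)\in\cg\subset\{\ck=0\}$ one has $\frac{\partial u}{\partial\tau}(q,\tau)=-K(q,d_qu(q,\tau),\tau)$, and a short computation then gives $d_q\ug(z)=d_qu(q,\tau)$, $\frac{\partial\ug}{\partial t}(z)=c-H(q,d_q\ug(z),t)$ and $\psi(J_{\ug}(z))=J_u(q,\tau)$; hence $J_{\ug}(z)\in\cy$ by Lemma~\ref{homeo-psi}, and $\cs(J_{\ug}(z))=S\circ i_S^{-1}(J_u(q,\tau))+ct=u(q,\tau)+ct=\ug(z)$ by the definition of $\cs$ and Proposition~\ref{geneselec}. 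This gives (\ref{kooks}) on $\cu_0$.

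For the last assertion, observe first that the domination of $\ug$ by $L+c$, applied to short $C^1$ arcs issued from $q$, gives at any differentiability point $z=(q,t)$ of $\ug$ and any $v\in T_qM$ the bound $d_q\ug(z)v+\frac{\partial\ug}{\partial t}(z)\le L(q,v,t)+c$, whence $\ch(J_{\ug}(z))\le c$ after taking the supremum over $v$; so the hypothesis $\ch(J_{\ug}(z))=c$ is the equality case. Let now $z=(q,t)$ be such that $\ug$ is differentiable at $z$ and $\ch(J_{\ug}(z))=c$, and suppose for the moment $t\neq0$, so $\tau=\eta^{-1}(t)\in(0,1)$. Since $\eta$ is a local diffeomorphism at $\tau$, $u$ is differentiable at $w:=(q,\tau)$ and one checks $\ck(J_u(w))=\dot\eta(\tau)\,(\ch(J_{\ug}(z))-c)=0$. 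I claim that then $J_u(w)\in\cg$ and $u(w)=S\circ i_S^{-1}(J_u(w))$; once this is known, running the computation of the first paragraph backwards (through $\psi$ and Lemma~\ref{homeo-psi}) gives $J_{\ug}(z)\in\cy$ and $\ug(z)=\cs(J_{\ug}(z))$.

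To prove the claim — this is the crux — recall that, the sublevels of $S_w$ changing topology at height $u(w)$, the value $u(w)$ is critical for $S_w=S(w,\cdot)$, so the set $C(w)$ of critical points of $S_w$ at level $u(w)$ is nonempty and compact (critical points of $S_w$ lie in a fixed compact set, $S$ agreeing with a nondegenerate quadratic form off a compact set), and $i_S(w,\xi)\in\cg$ for $\xi\in C(w)$. Given $w_n\to w$ with $w_n\in U_0$, the proof of Proposition~\ref{geneselec} shows $J_u(w_n)=i_S(w_n,\xi_n)$ for some critical point $\xi_n$ of $S_{w_n}$ at level $u(w_n)$; by compactness $\xi_n\to\xi_\infty\in C(w)$ along a subsequence, so $\lim du(w_n)$ equals the $T^*\cm_2$–momentum of $i_S(w,\xi_\infty)$. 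Thus every limit gradient of $u$ at $w$ lies in the compact set $G$ of momenta of $\{i_S(w,\xi):\xi\in C(w)\}$, all of whose points sit on $\Gamma_w:=\{(p,E):E=-\dot\eta(\tau)H(q,p,\eta(\tau))\}$ (since $\cg\subset\{\ck=0\}$), and $\Gamma_w$ is the graph of a \emph{strictly} concave function of $p$ because $\dot\eta(\tau)>0$ and $H$ is strictly convex in the fibers. By Clarke's characterisation of the generalised gradient (computed along the full–measure set $U_0$), $du(w)$ belongs to the convex hull of the limit gradients, hence to $\mathrm{conv}(G)$; but $\ck(J_u(w))=0$ puts $du(w)$ on $\Gamma_w$, and a point of a strictly concave graph lying in the convex hull of points of that graph must coincide with one of them — expanding the convex combination, strict convexity of $H(q,\cdot,\eta(\tau))$ forces the contributing momenta to be equal. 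Hence $du(w)$ is the momentum of $i_S(w,\xi_*)$ for some $\xi_*\in C(w)$, i.e. $J_u(w)=i_S(w,\xi_*)\in\cg$ and $u(w)=S(w,\xi_*)=S\circ i_S^{-1}(J_u(w))$.

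There remains the case $t=0$, where $\dot\eta(0)=0$ makes $\Gamma_w$ affine and the argument above collapses; this I expect to be the main obstacle. I would handle it by approximation: choosing $z_n\in\cu_0$ with $z_n\to(q,0)$, the points $J_{\ug}(z_n)\in\cy$ subconverge (by compactness of $\cy$) to some $y\in\cy$ lying over $(q,0)$, with $T^*M$–component in $\cl$ and $\cs(y)=\ug(q,0)$; it then suffices to identify $y$ with $J_{\ug}(q,0)$. For that I would use that $\cs$ is a primitive of $\theta$ along the $\phi_\ch$–invariant $\cy$ — so $\frac{d}{ds}\cs(\phi_\ch^s(y))=L(\Gamma(s),\dot\Gamma(s),s)+c$ along the projected orbit $\Gamma$ of $y$ — together with the domination of $\ug$: these make $s\mapsto\ug(\Gamma(s),[s]_1)-\cs(\phi_\ch^s(y))$ nonincreasing and zero at $s=0$, and the hypothesis $\ch(J_{\ug}(q,0))=c$ should be exactly what is needed to force it to be locally constant at $s=0$, i.e. to give equality in Young's inequality at $(q,0)$; strict fiberwise convexity of $H$ then forces the $T^*M$–component of $y$ to equal $d_q\ug(q,0)$, so $y=J_{\ug}(q,0)$ and (\ref{kooks}) holds at $(q,0)$ as well.
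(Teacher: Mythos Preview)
Your treatment of $z\in\cu_0$ is correct and is essentially the paper's argument.

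For the second assertion your argument for $t\neq 0$ is also correct, but the detour through $u$, $\cg$ and $\ck$ is exactly what manufactures the obstacle you meet at $t=0$: at $\tau\in\{0,1\}$ one has $\dot\eta(\tau)=0$, so the fiber $\{\ck=0\}$ over $(q,\tau)$ degenerates to the hyperplane $\{E=0\}$ and carries no strict convexity. Your proposed patch at $t=0$ does not close the gap. With $f(s)=\ug(\Gamma(s),[s]_1)-\cs(\phi_\ch^s(y))$ nonincreasing and $f(0)=0$, the hypothesis $\ch(J_{\ug}(q,0))=c$ only yields
\[
f'(0)=d_q\ug(q,0)\cdot\dot\Gamma(0)-H(q,d_q\ug(q,0),0)-L(q,\dot\Gamma(0),0)\le 0
\]
by Young's inequality, which is merely consistent with $f$ being nonincreasing and produces no contradiction when the inequality is strict. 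Nothing here forces equality in Young, hence nothing forces $p_y=d_q\ug(q,0)$.

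The paper avoids the case split altogether by running the very same Clarke/extremal-point argument directly with $\ug$ on $\cm_1$ rather than with $u$ on $\cm_2$. The point is that the fiber sublevel
\[
\ch^{-1}_{(q,t)}((-\infty,c])=\{(p,e):e\le c-H(q,p,t)\}
\]
is the hypograph of a strictly concave function of $p$ for \emph{every} $t\in\T$, including $t=0$, since $H(q,\cdot,t)$ is strictly convex for all $t$; hence every point of $\ch^{-1}_{(q,t)}(c)$ is extremal in this sublevel. Now $K_{\ug}(z)$, the set of limits of $d\ug(z_n)$ with $z_n\in\cu_0$, lies on $\ch^{-1}_z(c)$ by the first part, so $C_{\ug}(z)\subset\ch^{-1}_z((-\infty,c])$; Lemma~\ref{nonsmooth} gives $d\ug(z)\in C_{\ug}(z)$, and the hypothesis $\ch(J_{\ug}(z))=c$ makes $d\ug(z)$ extremal in $C_{\ug}(z)$, hence $d\ug(z)\in K_{\ug}(z)$. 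Choosing $z_n\in\cu_0$ with $J_{\ug}(z_n)\to J_{\ug}(z)$ and passing to the limit in (\ref{kooks}) then gives $J_{\ug}(z)\in\cy$ and $\ug(z)=\cs(J_{\ug}(z))$ for all $t$ at once. In short, your extremal-point idea is the right one; you just applied it on the wrong floor.
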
 
\begin{proof} 
{\em  Proof that $\ug$ is a graph selector.}\\
\begin{nota}
Let $h: \cm_1\rightarrow \cm_1$ be defined by $h(q, t)=(q, \eta^{-1}(t))$.
\end{nota}

Observe that $h$ is an homeomorphism and that $h_{|M\times (0, 1)}$ is a diffeomorphism onto $M\times (0, 1)$.
%We have
%$$\forall (q, t, p, e)\in \tilde\cy, \psi (q, t, p, e)=(h(q, t), {}^tDh(q, t)^{-1}(p, e-c))$$
%and 
%$$\forall (q,t)\in\cm_1, \ug(q, t)=u(h(q, t))+ct.$$
By definition, the function $\ugoth$ is differentiable on every $z=(q,t)\in \cu_0$ and
moreover
$$
J_{\ug}(z)=(q,d_q u(h(z)),t,c+\frac{1}{\dot{\eta}(\eta^{-1}(t))} \frac{\partial u}{\partial t}(h(z))).
$$
As $u$ is a graph selector for $\cg$, we have
$$
\psi(J_{\ug}(z))=(q,d_q u(h(z)),\eta^{-1}(t),\frac{\partial u}{\partial t}(h(z)))
=J_u(h(z))\in \cg\cap (T^*M \times (0,1)\times \R).
$$
By construction of $\psi$ and by Lemma \ref{homeo-psi}, we can say that
$\psi$ maps $\cy\cap (T^*M\times (0,1)\times \R)$ diffeomorphically onto
$\cg \cap (T^*M \times (0,1)\times \R)$, therefore 
$J_{\ug}(z)\in \cy$.\\
Moreover we have
$$\ug (z)=u(h(z))+ct=S\circ i_S^{-1}(J_u(h(z)))+ct=S\circ i_S^{-1}\circ\psi(J_{\ug}(z))+ct=
\cs(J_{\ug}(z)).$$

\medskip

{\sl Proof that Identity (\ref{kooks}) holds for every $z\in \cm_1$ where 
$\ugoth$ is differentiable and where $\ch(J_{\ugoth}(z))=c$.}\\

Let $z=(q,t)\in \cm_1$ be a point where $\ugoth$ is differentiable and 
$\ch(J_\ug(z))=c$. We follow the same step as in \cite{Arn}, and we
introduce two subsets of 
$T^*_{z}\cm_1=T_q^*M\times\R$.
Let $K_{\ugoth}(z)$ 
be the set of all limit points of sequences $(d\ugoth(z_n))_{n\in\N}$ where
$z_n\in \cu_0$ and 
$\lim\limits_{n\rightarrow +\infty} z_n=z$, and let $C_{\ugoth}(z)$ be the convex
hull of 
$K_{\ugoth}(z)$.  Let us give  a result due to F.~Clarke (see \cite{FM} for a proof of \cite{Cla} for a more general result).
\begin{lemma}\label{nonsmooth}
Let $f~: U\rightarrow \R$ be a Lipschitz function defined on a open subset $U$ of $\R^d$ and let $U_0\subset U$ be a subset with full Lebesgue measure such that $f$ is differentiable at every point of $U_0$. We introduce a notation. If $q\in U$, $K_f(q)$ is the  set of all the limits $\displaystyle{\lim_{n\rightarrow \infty} df(q_n)}$ where $q_n\in U_0$,  $\displaystyle{\lim_{n\rightarrow \infty}q_n=q}$ and $C_f(q)$ is the convex hull of $K_f(q)$. Then, at every point $q\in U$ where $f$ is differentiable, we have~: 
$df(q)\in C_f(q)$.
\end{lemma}

 By hypothesis the function $p\mapsto H(q,p,t)$ 
is strictly convex, therefore the {\sl energy sublevel }  
$$
\ch^{-1}_{(q,t)}((-\infty,c])=\left\{ (p,e)\in T^*_{(q,t)} (M\times\T_1),\quad 
H(q,p,t)+e\le c \right\}
$$
is also strictly convex (up to the symmetry $e\mapsto -e$, it is the epigraph 
of the function $p\mapsto H(q,p,t)-c$), and in particular, every point 
$(p,e)$ in the 
{\sl energy level} $\ch^{-1}_{(q,t)}(c)$ is extremal for 
$\ch^{-1}_{(q,t)}((-\infty,c])$. By hypothesis, $d\ugoth(z)$ is in the 
energy level $\ch^{-1}_z(c)$, therefore it is an extremal point of  
$C_\ugoth(z)$, hence a point of $K_{\ugoth}(z)$, and by definition of 
$K_{\ugoth}(z)$ there exist a sequence $(z_n)_{n\in\N}$ of points of $\cu_0$  
such that $(z_n,d\ugoth(z_n))_{n\in\N}$ converges to $(z,d\ugoth(z))$, or equivalently
$(J_\ug(z_n))_{n\in\N}$ converges to $(J_\ug(z))$,
but every 
point $J_\ug(z_n)$ lies in $\cy$ and satisfies the identity 
$\ugoth(z_n)=\cs(J_\ug(z_n))$. Taking the limit $n\rightarrow +\infty$ 
we get (\ref{kooks}). 
\end{proof}
\section{Calibration and conclusion}
In this section, we will prove that $\cy$ is contained in the graph of $d\ug$. \\
Observe that 
\begin{itemize}
\item the projection of $\cy$ is compact because $\cy$ is compact;
\item the projection of $\cy$ is dense in $\cm_1$. Indeed, Proposition \ref{u-lipschitz-select} implies that this projection contains $\cu_0$, which is dense in $\cm_1$.
\end{itemize}
Hence the projection of $\cy$ is $\cm_1$ and we will conclude that 
$\cy$ is a graph above the whole $\cm_1$ and that $\ug$ is everywhere differentiable. Thus $\cy$ is the  the graph of $d\ug$.\\
 Morever, we will also prove that $\cy$ is a locally Lipschitz graph in $T^*\cm_1$. Hence $\cy$ is a $C^1$ manifold that is the graph of a locally Lipschitz map. As $\cm$ is compact, this implies that $\cy$ is the graph of a $C^1$ map, i.e. that $\ug$ is $C^2$ and $\cy$ is the graph a $C^1$ exact 1-form: $d\ug$.

The main tool that we will use is the notion of calibrated curve.
\subsection{Calibration}
We will explain what happens along the curves that satisfy the equality in the inequality of domination (\ref{eqdom}). The proof is an analogue of the proof given by A.~Fathi in \cite{Fa1} in the autonomous case.
\begin{defi}
If $\gamma:[a, b]\rightarrow M$ is a $C^1$ arc, its {\em defect of calibration} is
$$\delta (\gamma)=\int_a^b(L(\gamma(t), \dot\gamma(t), t)+c)-(\ug(\gamma(b), b)-\ug(\gamma(a), a)).$$
\end{defi}
Then
\begin{itemize}
\item $\delta$ is always non-negative;
 \item if $(\gamma_n)$ $C^1$-converges to $\gamma$, then $\displaystyle{\lim_{n\rightarrow \infty} \delta(\gamma_n)=\delta(\gamma)}$;
 \item if $I\subset J$, then $\delta(\gamma_{|I})\leq \delta (\gamma_{|J})$.
 \end{itemize}
 
 \begin{defi}
 A $C^1$ curve $\gamma:I\rightarrow M$ is {\sl $(\ug, L, c)$-calibrated} if $\forall [a, b]\subset I$, $\delta(\gamma_{|[a, b]})=0$.
 \end{defi}
 
 \begin{proposition}\label{propcal}
  If $\gamma: I\rightarrow M$ is $(\ug, L, c)$-calibrated, then 
\begin{itemize}
\item $\ug$ is differentiable at every $(\gamma(t),t)$ with $t$ in the interior of $I$;
%\item $t\mapsto (\gamma(t), t)$ is the projection of a piece of orbit for $\ch$ that is$$t\mapsto (\gamma(t), t, d\ug(\gamma(t), t));$$
%\item this piece of orbit is contained in the level $\{ \ch=c\}$.
\item for all $t$ in the interior of $I$, we have  $d_q\ug(\gamma(t), t)=\frac{\partial L}{\partial v}(\gamma(t), \dot\gamma(t), t)$ and $\ch (J_\ug(\gamma(t),t))=c$.
\end{itemize}
 \end{proposition}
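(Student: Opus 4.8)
The plan is to exploit the domination inequality together with the fact that a calibrated curve saturates it, in order to recognize $d_q\ug(\gamma(t),t)$ as the unique point realizing equality in the Young/Fenchel inequality. First I would fix $t_0$ in the interior of $I$ and show that $\ug$ is differentiable at $z_0=(\gamma(t_0),t_0)$. The key is that for any other $C^1$ curve $\sigma$ with $\sigma(t_0)=\gamma(t_0)$ one still has the domination inequality $\ug(\sigma(b),b)-\ug(\sigma(a),a)\le\int_a^b(L(\sigma,\dot\sigma,t)+c)\,dt$, whereas for $\gamma$ this is an equality on every subinterval. Subtracting and letting $a,b\to t_0$ should pin down the ``derivative of $\ug$ in every direction'' against $\frac{\partial L}{\partial v}(\gamma(t_0),\dot\gamma(t_0),t_0)$ in the $q$-variable and against $-H(\gamma(t_0),\frac{\partial L}{\partial v}(\cdot),t_0)$ in the $t$-variable; the standard trick (used by Fathi in \cite{Fa1}) is to use the calibration on $[t_0,t_0+s]$ to get one inequality on the forward difference quotient, and calibration together with variation of the curve through $\gamma(t_0)+s\,w$ at time $t_0+s$ to get the reverse inequality, so the one-sided derivatives exist and coincide; combined with the Lipschitz property of $\ug$ this yields genuine differentiability at $z_0$.

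Once differentiability is known, the identity $d_q\ug(\gamma(t),t)=\frac{\partial L}{\partial v}(\gamma(t),\dot\gamma(t),t)$ falls out of the equality case in Young's inequality $p\cdot v\le H(q,p,t)+L(q,v,t)$, which is attained exactly when $p=\frac{\partial L}{\partial v}(q,v,t)$ (equivalently $v=\frac{\partial H}{\partial p}(q,p,t)$), using strict convexity of $L$ in $v$. Concretely, differentiating the calibration equality $\int_a^b(L+c)\,dt=\ug(\gamma(b),b)-\ug(\gamma(a),a)$ in $b$ at $b=t$ gives $L(\gamma(t),\dot\gamma(t),t)+c=d_q\ug(\gamma(t),t)\cdot\dot\gamma(t)+\frac{\partial\ug}{\partial t}(\gamma(t),t)$, i.e. $\frac{\partial\ug}{\partial t}=d_q\ug\cdot\dot\gamma-L-c$. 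On the other hand, domination along nearby curves forces $d_q\ug(\gamma(t),t)\cdot w-\frac{\partial L}{\partial v}(\gamma(t),\dot\gamma(t),t)\cdot w\le 0$ for all $w$ and hence $d_q\ug(\gamma(t),t)=\frac{\partial L}{\partial v}(\gamma(t),\dot\gamma(t),t)$; plugging this back and using the Legendre relation $p\cdot v-L(q,v,t)=H(q,p,t)$ gives $\frac{\partial\ug}{\partial t}(\gamma(t),t)=-H(\gamma(t),d_q\ug(\gamma(t),t),t)-c$, which is precisely $\ch(J_\ug(\gamma(t),t))=H+e=c$ with $e=\frac{\partial\ug}{\partial t}$. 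So both bullet points follow together.

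The main obstacle I anticipate is the differentiability claim itself: $\ug$ is only a priori Lipschitz (its $C^1$ locus $\cu_0$ need not contain $(\gamma(t),t)$), so one cannot simply differentiate known formulas. The honest work is the two-sided estimate on the difference quotients of $\ug$ at $(\gamma(t_0),t_0)$ in an arbitrary direction $(w,s)\in T_qM\times\R$, obtained by inserting into the domination inequality a family of competitor curves that agree with $\gamma$ outside a shrinking interval and are perturbed by order $s$ near $t_0$, then using first-order Taylor expansion of $\int L$ and the calibration equality for $\gamma$. This is exactly the argument of \cite{Fa1} transported to the non-autonomous, finite-interval setting, and the periodicity/boundary issues have already been arranged away by the earlier reductions, so it goes through; the one point requiring care is that $t_0$ lies in the interior of $I$ so that the competitor curves are defined on both sides. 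Strict convexity of $H$ in $p$ (hence of $L$ in $v$), available by the Tonelli hypothesis, is what makes the Young-equality minimizer unique and thus makes $d_q\ug$ well-defined by the formula.
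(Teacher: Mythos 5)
Your proposal is correct in substance, and for the differentiability claim it is the paper's argument: the paper sandwiches $\ug$ between two explicit $C^1$ functions $\psi_\pm$ built from affine competitor curves joining $(\gamma(a),a)$ to $(y,t)$ and $(y,t)$ to $(\gamma(b),b)$, with touching at $(\gamma(t_0),t_0)$ forced by calibration; your two-sided difference-quotient estimate is the same mechanism. Where you genuinely diverge is in how the two identities are obtained. The paper does \emph{not} compute the first variation of the action: it differentiates the calibration identity along the curve to get $c\le \ch(J_\ug(\gamma(t),t))$ via Young's inequality, and then obtains the reverse inequality $\ch(J_\ug(z))\le c$ from Clarke's nonsmooth calculus (Lemma \ref{nonsmooth}) combined with the facts that $J_\ug=c$ on $\cu_0$ (where $J_\ug\in\cy\subset\{\ch=c\}$) and that the fiberwise sublevel $\ch^{-1}_{(q,t)}((-\infty,c])$ is convex; equality in Young then yields $d_q\ug=\frac{\partial L}{\partial v}$. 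Your route instead extracts $d_q\ug=\frac{\partial L}{\partial v}$ directly from the first variation of the domination inequality and only then reads off $\ch(J_\ug)=c$ from the Legendre relation. Both work; the paper's version needs Clarke's lemma and convexity but treats $\gamma$ as a bare $C^1$ curve, whereas your version avoids Clarke's lemma but tacitly uses the Euler--Lagrange equation (or the du Bois--Reymond form of it) to convert $\int_a^t\bigl(\frac{\partial L}{\partial q}\cdot h+\frac{\partial L}{\partial v}\cdot\dot h\bigr)ds$ into the boundary term $\frac{\partial L}{\partial v}(\gamma(t),\dot\gamma(t),t)\cdot w$. Since the hypothesis only gives $\gamma\in C^1$, you should say explicitly that a calibrated curve is a minimizer (domination plus calibration), hence by Tonelli regularity a $C^2$ extremal; without that step the integration by parts is not justified. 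Two small further points: your formula $\frac{\partial\ug}{\partial t}=-H-c$ has a sign slip (the correct relation is $\frac{\partial\ug}{\partial t}=c-H$, consistent with your own conclusion $H+e=c$); and the passage from "directional derivatives exist and are linear in the direction" to full differentiability does use the Lipschitz property of $\ug$, as you note -- this is fine in finite dimensions.
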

 
 \begin{proof} We assume that $\gamma$ is $C^1$ and calibrated.\\
 
{\sl Value of $d\ug(\gamma(.), .)$ if $\ug$ is differentiable along $t\mapsto (\gamma(t), t)$.}\\
Let us assume that $u$ is differentiable at every point of $\{ (\gamma(t), t); t\in (a, b)\}$.\\
We have
$$\forall t\in (a, b), \ug(\gamma(t),t)-\ug(\gamma (a),a))=\int_{a}^t(L(\gamma(s), \dot\gamma(s),s)+c)ds.$$
Differentiating with respect to $t\in (a,b) $, we obtain 
$$d_q\ug(\gamma(t), t).\dot\gamma(t)+\frac{\partial \ug}{\partial t}(\gamma(t), t)=L(\gamma(t), \dot\gamma(t), t)+c.$$
Using Young inequality, we deduce
\begin{equation}\label{equalcal}
\begin{matrix}
c&=d_q\ug(\gamma(t), t).\dot\gamma(t)+\frac{\partial \ug}{\partial t}(\gamma(t), t)-L(\gamma(t), \dot\gamma(t), t)\hfill\\
&\leq \frac{\partial \ug}{\partial t}(\gamma(t), t)+H(\gamma(t), d_q\ug(\gamma(t), t),t)=\ch(\gamma(t), t, d\ug(\gamma(t), t)).
\end{matrix}
\end{equation}
But Lemma \ref{nonsmooth} implies that $\ch(\gamma(t), t, d\ug(\gamma(t), t))\leq c$. Hence Inequality (\ref{equalcal}) is in fact an equality. Il particular we have   equality in Young inequality
$$d_q\ug(\gamma(t), t).\dot\gamma(t)=L(\gamma(t), \dot\gamma(t), t)+H(\gamma(t), d_q\ug(\gamma(t), t),t)$$
then $d_qu(\gamma(t), t)=\frac{\partial L}{\partial v}(\gamma(t), \dot\gamma(t), t)$ 
and so $\frac{\partial \ug}{\partial t}(\gamma(t), t)=c-H(\gamma(t), d_q\ug(\gamma(t), t),t)$.
This can be written $\ch (J_{\ug}(\gamma(t), t))=c$.
 
 \medskip
 
 {\sl Proof that $\ug$ is differentiable at every $(\gamma(t),t)$ with $t$ in the interior of $I$}\\
 Let us fix $t_0\in (a, b)$. We work in a chart around $\gamma(t_0)=x$. Then for every $t\in (a, b)$ and $y$ close
 to $x$, we consider the arc $\gamma_{y,t}: [a, t]\rightarrow M$ that is defined by $\gamma_{y,t}(s)=\gamma(s)+\frac{s-a}{t-a}(y-\gamma(t))$. The domination property implies that 
 $$\ug(y, t)\leq \psi_+(y, t)=\ug (\gamma (a), a)+\int_a^t(L(\gamma_{y,t}(s), \dot \gamma_{y,t}(s), s)+c)ds.$$
 Observe that $\ug(x, t_0)=\ug(\gamma (a), a)+\int_a ^{t_0}(L(\gamma(s), \dot\gamma (s), s)+c)ds=\psi_+(x,t_0)$ because $\gamma$ is calibrated. Observe to that
 $$\psi_+(y, t)=\ug (\gamma (a), a)+\int_a^t(L(\gamma(s)+\frac{s-a}{t-a}(y-\gamma(t)), \dot \gamma(s)+\frac{1}{t-a}(y-\gamma(t)), s)+c)ds$$
 and thus $\psi_+$ is $C^1$.
 
 Let us now consider the arc $\eta_{y, t}:[t, b]\rightarrow M$ that is defined by  $\eta_{y,t}(s)=\gamma(s)+\frac{b-s}{b-t}(y-\gamma(t))$. Then 
 $$\psi_-(y, t)=\ug (\gamma(b), b)-\int_t^b(L(\eta_{y, t}(s), \dot \eta_{y, t}(s), s)+c)ds\leq \ug(y, t).$$
 $\psi_-$ is $C^1$ and because $\gamma$ is calibrated we have $\psi_-(x, t_0)=\ug(x, t_0)$. 
 
 Finally, we have found two $C^1$ function $\psi_-$ and $\psi_+$ such that $\psi_-\leq \ug\leq \psi_+$ and $\psi_-(x, t_0)=\ug(x, t_0)=\psi_+(x, t_0)$. This implies that $\ug$ is differentiable at $(x, t_0)$.
\end{proof}
\subsection{Study along the $\Omega$-limit set of $\varphi_{H|\cl}^{0,1}$}

\begin{proposition}\label{POmega}
  Let $(q, p)\in \Omega (\varphi_{H|\cl}^{0,1})$ be a point of the $\Omega$-limit set of $\varphi_{H|\cl}^{0,1}$ and let
  $(q, p, 0, c-H(q, p, 0))$ be the corresponding point in $\cy$. Then the projection of the $\ch$ orbit of $(q, p,0 , c-H(q, p, 0))$ on $M$ is $(\ug, c , L)$-calibrated.
\end{proposition}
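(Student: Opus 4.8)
\noindent\emph{Sketch of the argument.}

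The plan is to convert the calibration statement into a monotonicity statement for a single real function of time, and then to use the recurrence of $(q,p)$ to force that function to be constant. Put $x_0=(q,p,0,c-H(q,p,0))\in\cy$ and let $\gamma(t)$ be the projection to $M$ of $\phi_\ch^t(x_0)$; since $t\mapsto\phi_\ch^t(x_0)$ solves the Hamiltonian equations for $\ch$, the curve $\gamma$ is $C^1$ on $\R$. Because $\cy$ is invariant under $(\phi_\ch^t)$ and $\cs$ is a $C^1$ primitive of $\theta=<p,dq>+Ed\tau$ along $\cy$ (Proposition \ref{PLiouvilleprim}), the function $t\mapsto\cs(\phi_\ch^t(x_0))$ is $C^1$ with derivative $\theta(X_\ch)$ along the orbit. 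A direct computation gives $\theta(X_\ch)=<p,\frac{\partial H}{\partial p}>+e$; along $\cy$ one has $e=c-H$, and the Legendre duality gives, with $v=\frac{\partial H}{\partial p}(q,p,t)$, the identity $<p,v>-H(q,p,t)=L(q,v,t)$, so that $\theta(X_\ch)=L(\gamma(t),\dot\gamma(t),t)+c$ along the orbit. Integrating,
\begin{equation}\label{eqprimorbit}
\cs(\phi_\ch^b(x_0))-\cs(\phi_\ch^a(x_0))=\int_a^b\big(L(\gamma(t),\dot\gamma(t),t)+c\big)\,dt,\qquad a\leq b .
\end{equation}

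Setting $g(t)=\cs(\phi_\ch^t(x_0))-\ug(\gamma(t),t)$, the identity (\ref{eqprimorbit}) and the definition of the defect of calibration give $\delta(\gamma_{|[a,b]})=g(b)-g(a)$. By the domination inequality (\ref{eqdom}) this is nonnegative, so $g$ is nondecreasing; and $g$ is bounded, since $\cs$ is continuous on the compact manifold $\cy$ and $\ug$ is continuous on the compact manifold $\cm_1$. Thus the proposition reduces to proving that $g$ is constant; and exactly the same reduction holds for the orbit of the lift $x'=(x,0,c-H(x,0))\in\cy$ of any point $x\in\cl$, producing a nondecreasing bounded function whose value on an interval is the defect of the projected orbit.

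Now I would first prove that $\gamma$ is calibrated on $[0,+\infty)$, i.e.\ $g$ is constant there, using only that $(q,p)$ belongs to $\cl$ and to the $\Omega$-limit set. Choose $x_1\in\cl$ and integers $n_k\to+\infty$ with $\varphi_H^{0,n_k}(x_1)\to(q,p)$; write $x_1'=(x_1,0,c-H(x_1,0))\in\cy$, let $\gamma_1(t)$ be the projection to $M$ of $\phi_\ch^t(x_1')$ and put $g_1(t)=\cs(\phi_\ch^t(x_1'))-\ug(\gamma_1(t),t)$, so that $g_1$ is nondecreasing and bounded by the preceding remark. From (\ref{flow_ch}) and the $1$-periodicity of $H$ we get $\phi_\ch^{n_k}(x_1')=(\varphi_H^{0,n_k}(x_1),0,c-H(\varphi_H^{0,n_k}(x_1),0))\to x_0$, and using the continuity of $\cs$, $\ug$ and of the projection, together with the $1$-periodicity of $\ug$ in time (and $n_k\in\Z$), we obtain $g_1(n_k)\to\cs(x_0)-\ug(q,0)=g(0)$. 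A bounded nondecreasing function with $g_1(n_k)\to g(0)$ along $n_k\to+\infty$ satisfies $\sup_{t\in\R}g_1(t)=g(0)$, hence $g_1\leq g(0)$ everywhere. For a fixed $s\geq 0$, continuity of the flow gives $\phi_\ch^{n_k+s}(x_1')=\phi_\ch^s(\phi_\ch^{n_k}(x_1'))\to\phi_\ch^s(x_0)$, whence $g_1(n_k+s)\to g(s)$ (again using that $\ug$ is $1$-periodic in time and $n_k\in\Z$); therefore $g(s)\leq g(0)$, and combined with $g(s)\geq g(0)$ (monotonicity of $g$) this gives $g(s)=g(0)$ for all $s\geq 0$.

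Finally, to obtain calibration on all of $\R$, I would apply the previous paragraph not to $(q,p)$ alone but to every backward iterate $w_m=\varphi_H^{0,-m}(q,p)$, $m\in\N$. Each $w_m$ lies in $\cl$ and in the $\Omega$-limit set --- which is invariant under $\varphi_{H|\cl}^{0,1}$ and, being a union of $\omega$-limit sets of a homeomorphism of a compact space, also under its inverse. From (\ref{flow_ch}) and the $1$-periodicity of $H$ one checks that $\phi_\ch^t(w_m,0,c-H(w_m,0))=\phi_\ch^{t-m}(x_0)$, so that the projection to $M$ of the orbit of that lift is $t\mapsto\gamma(t-m)$; by the previous paragraph this curve is calibrated on $[0,+\infty)$, which, using the $1$-periodicity of $L$ and of $\ug$, means $\delta(\gamma_{|[a,b]})=0$ for every $[a,b]\subset[-m,+\infty)$. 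Letting $m\to+\infty$ gives $\delta(\gamma_{|[a,b]})=0$ for every $[a,b]\subset\R$, which is the assertion. The genuinely delicate point is this last step: a point of an $\Omega$-limit set need not be recurrent for its own orbit, so the return argument only produces $g\leq g(0)$ on $(-\infty,0)$; the remedy is to run that argument along the backward orbit of $(q,p)$, which is legitimate because the $\Omega$-limit set is backward invariant. One must also be a little careful to build $g_1$ out of a genuine point of $\cy$, so that Proposition \ref{PLiouvilleprim} and identity (\ref{eqprimorbit}) apply to it.
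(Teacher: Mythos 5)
Your argument is correct and rests on exactly the same two ingredients as the paper's proof: the identity $\int_a^b(L+c)\,dt=\cs(\zeta(b))-\cs(\zeta(a))$ along orbits in $\cy$ (the paper's Lemma \ref{Lcalcul}, derived from Proposition \ref{PLiouvilleprim}), and the continuity of $\cs$ and $\ug$ combined with the recurrence furnished by the $\Omega$-limit hypothesis. Your packaging via the bounded nondecreasing function $g$ and the extension to negative times through backward iterates of $(q,p)$ is a cosmetic variant of the paper's direct estimate $0\leq\delta(q_{|[a,b]})\leq\liminf_n\delta(q_{n|[a,b_n]})\to 0$ on returning orbit segments anchored at an arbitrary left endpoint $a$.
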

\begin{proof}
  Let us fix $[a, b]\subset \R$ and let us consider the piece of orbit $$t\in[a, b]\mapsto \zeta(t)=\phi_{\ch}^t(q, p, 0, c-H(q, p, 0))=(q(t), p(t), t, c-H(q(t), p(t), t)).$$ Because $(q, p)\in \Omega (\varphi_{H|\cl}^{0,1})$,
  we can find a  sequence of pieces of $\ch$ orbits  $$t\in [a, b_n]\mapsto \zeta_n(t)=(q_n(t),  p_n(t), t,c-H(q_n(t), p_n(t), t))$$
in $\cy$ such that $b_n-a\in \N$,  $\displaystyle{\lim_{n\rightarrow \infty} b_n=+\infty}$, $\displaystyle{\lim_{n\rightarrow \infty} (q_n(a), p_n(a))=(q(a), p(a))}$ and $\displaystyle{\lim_{n\rightarrow \infty} (q_n(b_n), p_n(b_n))=(q(a), p(a))}$.\\
Because of the properties of the defect of calibration $\delta$, we have
$$0\leq\delta (q_{|[a, b]})=\lim_{n\rightarrow \infty} \delta(q_{n|[a, b]})\leq \liminf_{n\rightarrow \infty}\delta (q_{n|[a, b_n]}).$$
We have 
$$\delta (q_{n|[a, b_n]})=\ug(q_n(b_n), b_n)-\ug(q_n(a), a)-\int_a^{b_n}(L(q_n(t), \dot q_n(t), t)+c)dt.
$$
We prove now the following lemma.
\begin{lemma}\label{Lcalcul}
If $t\in\R\mapsto \zeta(t)=(q(t), p(t), t, c-H(q(t), p(t), t))$ is an orbit for $\ch$ on $\cy$, then we have
\begin{equation}\int_a^b(L(q(t), \dot q(t), t)+c)=\cs (\zeta(b))-\cs(\zeta(a)).\end{equation}
\end{lemma}
Because of Proposition \ref{PLiouvilleprim}, we know that   $\cs$ is a primitive of $\theta$ along $\cy$ and so we have 
$$
\cs(\zeta(b))-\cs(\zeta(a))=\int_{\zeta\left|_{[a,b]}\right.} \theta = \int_a^b (p(t).\dot{q}(t)-H(q(t),p(t),t)+c)\, dt
$$
and along every orbit we have $p(t).\dot{q}(t)-H(q(t),p(t),t)= L(q(t), \dot q(t), t)$. This proves the lemma.

Applying Lemma \ref{Lcalcul}, we obtain:
$$\delta (q_{n|[a, b_n]})=\ug(q_n(b_n), b_n)-\ug(q_n(a), a)-(\cs(\zeta_n(b_n))-\cs (\zeta_n(a))).
$$
Using the continuity of $\ug$ and $\cs$ and the fact that $\displaystyle{ \lim_{n\rightarrow \infty}\zeta_n(a)=\zeta(a)}$ and $\displaystyle{ \lim_{n\rightarrow \infty}\zeta_n(b_n)=\zeta(b)}$, we deduce that 
$$\lim_{n\rightarrow \infty}\delta(q_{n|[a, b_n]})=\ug(q(a),a)-\ug(q(a), a)-(\cs (\zeta(a))-\cs(\zeta(a)))=0$$
 and thus $q$ is calibrated.\end{proof}

\subsection{Every orbit in $\cy$ is in the graph of $d\ug$}
\begin{proposition} \label{calibri}
Let $t \mapsto \zeta(t)=\phi_{\ch}^t(q, p, 0, c-H(q, p, 0))=(q(t),  p(t), t, c-H(q(t), p(t), t))$ be an orbit for $\ch$ on $\cy$. Then the curve $q(t)$ is $(\ug, c, L)$-calibrated and we have
$$\forall t\in\R, (p(t), c-H(q(t), p(t), t))=d\ug (q(t), t).$$
\end{proposition}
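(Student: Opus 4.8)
The plan is to combine the two previous propositions (Proposition~\ref{propcal} on calibrated curves and Proposition~\ref{POmega} on the $\Omega$-limit set) with a propagation/recurrence argument, exactly as in the autonomous case treated in \cite{Arn}. The key structural fact is that the flow $\phi_\ch^t$ preserves the compact manifold $\cy$, so every orbit in $\cy$ has a nonempty $\Omega$-limit set contained in $\cy$, and every point of $\cy$ is obtained by flowing forward a point whose $\Omega$-limit set is nonempty. Since $\cy$ is the suspension of the time-one map $\varphi_{H|\cl}^{0,1}$, the $\Omega$-limit set of an orbit in $\cy$ corresponds to the $\Omega$-limit set of $\varphi_{H|\cl}^{0,1}$ in $\cl$, so Proposition~\ref{POmega} applies there.

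\textbf{Main steps.} First I would observe that if $(q,p)\in\cl$ and $\zeta(t)=\phi_\ch^t(q,p,0,c-H(q,p,0))$ is the corresponding orbit in $\cy$, then its $\Omega$-limit set is nonempty (compactness of $\cy$) and consists of points of the form $(q',p',0,c-H(q',p',0))$ with $(q',p')\in\Omega(\varphi_{H|\cl}^{0,1})$. By Proposition~\ref{POmega}, the projection to $M$ of the $\ch$-orbit through such a point $(q',p',0,c-H(q',p',0))$ is $(\ug,c,L)$-calibrated; by Proposition~\ref{propcal}, along that orbit $\ug$ is differentiable and $J_\ug(q'(t),t)=(q'(t),p'(t),t,c-H(q'(t),p'(t),t))$, i.e. the whole orbit lies in the graph of $d\ug$. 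So the statement holds on the $\Omega$-limit set. Second, I would propagate this back along $\zeta$ to the original point: take a sequence $t_n\to+\infty$ with $\zeta(t_n)\to$ a point $\zeta_\infty$ of the $\Omega$-limit set; write $t_n = t_n'$ with $t_n'\in\N$ (or adjust so that $t_n\to+\infty$ along integers, using $2$-periodicity/$1$-periodicity of the flow so that $\zeta(t+1)$ and $\zeta(t)$ have the same $M\times\{t\}$-slice structure), and use that $\ug$ is differentiable at the limit point with $J_\ug(\zeta_\infty\text{-projection})=\zeta_\infty$, together with continuity of $\ug$ and $\cs$ and the identity $\ug(z)=\cs(J_\ug(z))$ of Proposition~\ref{u-lipschitz-select} on the set where it holds, to conclude calibration of the curve $q(t)$ on every compact interval $[a,b]$. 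The crucial point is that the defect of calibration $\delta(q_{|[a,b]})$ is bounded above by $\delta$ of longer subarcs returning near the start (this is the ``$I\subset J$ monotonicity'' plus the return trick used in the proof of Proposition~\ref{POmega}); running that argument with the endpoint landing near $\zeta_\infty$ and applying Lemma~\ref{Lcalcul} gives $\delta(q_{|[a,b]})=0$. Once $q(t)$ is calibrated, Proposition~\ref{propcal} directly yields that $\ug$ is differentiable at every $(q(t),t)$ and that $d_q\ug(q(t),t)=\frac{\partial L}{\partial v}(q(t),\dot q(t),t)$, hence (via the Legendre transform) $d_q\ug(q(t),t)=p(t)$, and $\frac{\partial\ug}{\partial t}(q(t),t)=c-H(q(t),p(t),t)$; together these say $(p(t),c-H(q(t),p(t),t))=d\ug(q(t),t)$, which is exactly the claim.

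\textbf{Expected obstacle.} The main subtlety, as in Proposition~\ref{POmega}, is handling the fact that $\cs$ is only known to be a genuine primitive of $\theta$ along $\cy$ (Proposition~\ref{PLiouvilleprim}) and that $\ug$ is only differentiable a priori on $\cu_0$ and on the energy level $\{\ch=c\}$; one must run the limiting argument carefully so that the points whose differentials are taken lie where the identity $\ug=\cs\circ J_\ug$ is valid, and so that the $\Omega$-limit approximation interacts correctly with the reparametrization $h(q,t)=(q,\eta^{-1}(t))$ relating $\cy$ and $\cg$. The other point requiring care is that the orbit $\zeta(t)$ is defined for all $t\in\R$ (not just forward time), so one should check the statement on negative time intervals as well — but this follows by the same argument applied to the time-reversed flow, whose invariant manifold is still $\cy$ and whose $\Omega$-limit set (the $\alpha$-limit set of the original) is again contained in $\Omega(\varphi_{H|\cl}^{0,1})$ up to replacing the map by its inverse; alternatively, once calibration holds on $[a,+\infty)$ for the relevant arcs one extends to all of $\R$ by the monotonicity of $\delta$ under restriction. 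I would expect the write-up to be short, essentially a bookkeeping combination of Propositions~\ref{u-lipschitz-select}, \ref{propcal}, \ref{POmega} and Lemma~\ref{Lcalcul}.
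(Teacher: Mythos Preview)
Your strategy and list of ingredients match the paper's exactly: Propositions~\ref{u-lipschitz-select}, \ref{propcal}, \ref{POmega}, Lemma~\ref{Lcalcul}, and the monotonicity of the defect $\delta$. There is, however, one genuine soft spot in how you combine them.

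You propose to use only the $\omega$-limit set to get calibration on $[a,b]$ (and then invoke the $\alpha$-limit set merely as a separate step ``for negative time intervals''). But the forward argument alone does \emph{not} yield $\delta(q_{|[a,b]})=0$. Indeed, taking integers $n_i\to+\infty$ with $\zeta(n_i)\to\zeta_+(0)$, Lemma~\ref{Lcalcul} gives
\[
\delta(q_{|[a,n_i]})=\bigl[\cs(\zeta(n_i))-\ug(q(n_i),0)\bigr]+\bigl[\ug(q(a),a)-\cs(\zeta(a))\bigr].
\]
The first bracket tends to $0$ by Proposition~\ref{POmega} + Proposition~\ref{propcal} + Proposition~\ref{u-lipschitz-select}, but the second bracket is the fixed quantity $\ug(q(a),a)-\cs(\zeta(a))$, which you have no reason to know vanishes. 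So you only obtain $\delta(q_{|[a,b]})\le \ug(q(a),a)-\cs(\zeta(a))$, not $0$. Your phrase ``longer subarcs returning near the start'' does not apply here: recurrence was available in Proposition~\ref{POmega} because the base point was in the $\Omega$-limit set, but a general orbit of $\cy$ need not return near itself.

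The paper fixes this by using the $\alpha$- and $\omega$-limit sets \emph{simultaneously}: it picks integers $m_i,n_i\to+\infty$ with $\zeta(-m_i)\to\zeta_-(0)$ and $\zeta(n_i)\to\zeta_+(0)$ and bounds $\delta(q_{|[a,b]})\le\delta(q_{|[-m_i,n_i]})$. Then \emph{both} boundary brackets tend to $0$, and calibration follows. (Equivalently, running your forward argument with left endpoint $a$ and your backward argument with right endpoint $a$ gives the two opposite inequalities $\ug(q(a),a)\ge\cs(\zeta(a))$ and $\ug(q(a),a)\le\cs(\zeta(a))$, forcing equality for every $a$; but this still requires the $\alpha$-limit input for every $a$, not just for negative ones.) Once you make this adjustment, the rest of your sketch---applying Proposition~\ref{propcal} to the calibrated curve to recover $d\ug(q(t),t)=(p(t),c-H(q(t),p(t),t))$---is exactly what the paper does.
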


\begin{proof}
  We choose $(q_+, p_+)\in \omega((q(0), p(0)), \varphi_H^{0,1})$ and
  $(q_-, p_-)\in \alpha((q(0), p(0)), \varphi_H^{0,1})$ and we denote by
  $\zeta_\pm(t)=(q_\pm(t), p_\pm(t), t,c-H(q_\pm(t), p_\pm(t), t)) $ the corresponding $\ch$ orbits in $\cy$. \\
Then there exists two increasing sequences $(n_i)$ and $(m_i)$ of positive integers so that 
\begin{equation}\label{eq00}\lim_{i\rightarrow \infty} \zeta(-m_i)=\zeta_-(0)\quad{\rm and}\quad
  \lim_{i\rightarrow \infty} \zeta(n_i)=\zeta_+(0).\end{equation}
If $[a, b]\subset \R$, we have
$$0\leq \delta(q_{|[a, b]})\leq \liminf_{i\rightarrow \infty}\delta(q_{[-m_i, n_i]}),$$
hence we will prove that this last limit is zero.

Using Lemma \ref{Lcalcul}, we  obtain
$$\begin{matrix}\delta(q_{[-m_i, n_i]})&=\int_{-m_i}^{n_i}(L(q(t), \dot q(t), t)+c)dt-(\ug(q(n_i), n_i)-\ug(q(-m_i), -m_i))
  \hfill\\
&=\cs(\zeta(n_i))-\ug(q(n_i), n_i)-(\cs(\zeta(-m_i))-\ug(q(-m_i), -m_i))\hfill

\end{matrix}$$
Because of (\ref{eq00}) and of the continuity of $\ug$ and $\cs$, we obtain
$$\lim_{i\rightarrow \infty} \delta(q_{[-m_i, n_i]})=\cs(\zeta_+(0))-\ug(q_+(0), 0)-(\cs(\zeta_-(0))-\ug(q_-(0), 0)).$$
We deduce from Proposition \ref{POmega} and Proposition \ref{propcal} that $\ug$ is differentiable at every $(q_\pm(t),t)$  and that
$$d_q\ug(q_\pm(t), t)=\frac{\partial L}{\partial v}(q_\pm(t), \dot q_\pm(t), t)\quad{\rm and}\quad \ch (q_\pm(t),d_q\ug(q_\pm(t), t), t,  \frac{\partial \ug}{\partial t}(q_\pm(t), t))=c.$$
This implies that $d\ug(q_\pm(t), t)=(p_\pm(t), c-H(q_\pm(t), p_\pm(t), t)) $ and then that $\zeta_\pm(t)=(q_\pm(t),d_q\ug(q_\pm(t), t),  t,
\frac{\partial \ug}{\partial t}(q_\pm(t), t))$. This gives that
$$\lim_{i\rightarrow \infty} \delta(q_{[-m_i, n_i]})= 
 \cs(q_+(0), d_q\ug(q_+(0), 0), 0, \frac{\partial \ug}{\partial t}(q_+(0), 0))-\ug(q_+(0), 0)$$
 $$-(\cs(q_-(0), d_q\ug(q_-(0), 0), 0, \frac{\partial \ug}{\partial t}(q_-(0), 0))-\ug(q_-(0), 0)).$$

Proposition  \ref{u-lipschitz-select} tells us that 
$$\forall t\in\R, (q_\pm(t), d_q\ugoth(q_\pm(t), t), t,
\frac{\partial \ugoth}{\partial t}(q_\pm(t), t))\in \cy$$ and$$ \ugoth(q_\pm(t), t)=
\cs(q_\pm(t), d_q\ugoth(q_\pm(t), t), t, \frac{\partial \ugoth}{\partial t}(q_\pm(t), t)).$$
We finally deduce that
$$\lim_{i\rightarrow \infty} \delta(q_{[-m_i, n_i]})=0$$
and that $q$ is $(u, L,c)$-calibrated.

We deduce from Proposition \ref{propcal} that $\ug$ is differentiable at every $(q(t),t)$  and that
$$d_q\ug(q(t), t)=\frac{\partial L}{\partial v}(q(t), \dot q(t), t)\quad{\rm and}\quad \ch (q(t), d_q\ug(q(t), t), t,
\frac{\partial \ug}{\partial t}(q(t), t))=c.$$
This implies that
$\zeta(t)= (q(t), d_q\ug(q(t), t), t, \frac{\partial \ug}{\partial t}(q(t), t))=J_{\ug}(q(t),t)$.
\end{proof}
\subsection{The Lipschitz property of $\cy$}
We know   that $\cy$ is the graph $d\ug$. 
We wish to show   that $d\ug$ is locally Lipschitz. Then we will deduce that $\cl$, which is a $C^1$ Lagrangian submanifold and a locally Lipschitz graph, is the graph of a $C^1$ 1-form.\\

We use Proposition  4.11.3 of \cite{Fa1}:\\

\noindent{\bf Criterion for a Lipschitz derivative (Albert Fathi) .} {\em Let $B=B(x_0, r_0)$ be an open ball in $\R^n$, let $u:B\rightarrow \R$ be a function and let $K>0$ be a positive constant. We introduce the following notation
$$A_{K, u}=\{ x\in B; \exists \varphi_x\in L(\R^n, R), \forall y\in B, |u(y)-u(x)-\varphi_x(y-x)|\leq K\| y-x\|^2\}.$$
Then $u$ has a derivative at every point of $A_{K, u}$ and if $x\in A_{K, u}$, we have $d_xu=\varphi_x$. Moreover, the restriction of $x\mapsto d_xu$ to $\{x\in A_{K, u}; \| x-x_0\|\leq \frac{r_0}{3}\}$ is Lipschitz with a Lipschitz constant equal to $6K$.\\

}
Let us fix $(x_0, t_0)\in M\times [0, 1]$ and let us prove that $d\ug$ is Lipschitz in some neighbourhood of $(x_0, t_0)$. We work in some chart, i.e. $B_0=\bar B(x_0, r_0)$.  Then we choose $B_1=\bar B(x_0, r_1)\subsetneq  B_0$ and
$I_0=[t_0-\varepsilon_0, t_0+\varepsilon_0]$, such that
$$\forall t, \tau\in I_0, \forall x\in B_1, \pi\circ \varphi_H^{t, \tau}(x, d_q\ug(x, t))\in B_0.$$
Let us set $\varepsilon_1=\frac{\varepsilon_0}{4}$ and
$I_1=[t_0-\varepsilon_1,t_0+\varepsilon_1]$.
If $(x, t)\in B_1\times I_1$, we consider the $(\ug, L, c)$-calibrated curve
$$\begin{matrix}\Gamma=\Gamma_{x, t}: &[t-3\varepsilon_1, t+3\varepsilon_1]&\rightarrow& B_0\hfill\\
&\hfill s&\mapsto& \pi\circ \varphi_H^{t,s} (x, d_q\ug)(x, t)).\end{matrix}$$
Then, for every $(y, \tau)\in B_1\times I_1$, we can build two curves (as we did in the proof of Proposition \ref{propcal})
\begin{itemize}
\item$\gamma_{y, \tau}=\gamma_{y, \tau, x, t}: [t-3\varepsilon_1, \tau]\rightarrow B_0$ defined by 
$$\gamma_{y, \tau}(s)=\Gamma_{x,t}(s)+\frac{s-(t-3\varepsilon_1)}{\tau-(t-3\varepsilon_1)}(y-\Gamma_{x,t}(\tau));
$$
\item $\eta_{y, \tau}=\eta_{y, \tau, x, t}:[\tau, t+3\varepsilon_1]\rightarrow B_1$ defined by 
$$\eta_{y, \tau}(s)=\Gamma_{x,t}(s)+\frac{(t+3\varepsilon_1)-s}{(t+3\varepsilon_1)-\tau}(y-\Gamma_{x,t}(\tau)).
$$
\end{itemize}
Observe that $\gamma_{x, t}=\Gamma_{x,t}\left|_{[t-3\varepsilon_1, \tau]}\right.$ and
$\eta_{x,t}=\Gamma_{x,t}\left|_{[\tau, t+3\varepsilon_1]}\right.$.
We then define
$$\psi_+(y, \tau)=\psi_{+,x, t}(y, \tau)=\ug(\Gamma_{x,t}(t-3\varepsilon_1), t-3\varepsilon_1)
+\int_{t-3\varepsilon_1}^\tau \left(L(\gamma_{y, \tau}(s), \dot \gamma_{y, \tau}(s), s)+c\right)ds$$
and
$$\psi_-(y, \tau)=\psi_{-,x, t}(\y, \tau)=\ug(\Gamma_{x,t}(t+3\varepsilon_1), t+3\varepsilon_1)
-\int_\tau ^{t+3\varepsilon_1}\left(L(\eta_{y, \tau}(s), \dot \eta_{y, \tau}(s), s)+c\right)ds.$$
Let us recall that $\psi_-\leq \ug\leq \psi_+$, that $\psi_-$ and $\psi_+$ are $C^1$ and that $\psi_-(x, t)=\ug(x, t)=\psi_+(x,t)$.\
In particular,
\begin{equation}\label{Ineg1}\ug(y, \tau)-\ug(x, t)\leq \psi_+(y, \tau)-\psi_+(x, t)=
  \alpha+c(\tau-t)+\beta\end{equation}
where 
$$\alpha=\int_{t-3\varepsilon_1}^{t}\left( L(\gamma_{y, \tau}(s), \dot\gamma_{y, \tau}(s), s)- L(\gamma_{x,t}(s), \dot\gamma_{x,t}(s), s)\right)ds$$
and 
$$\beta=\int_{t}^{\tau} L(\gamma_{y, \tau}(s), \dot\gamma_{y, \tau}(s), s)ds.$$
Because each curves $s\mapsto \frac{\partial L}{\partial v}(\Gamma_{x, t}(s), \dot \Gamma_{x, t}(s), s)$ is drawn on $\cy$ and then bounded, there exists a constant $K_0$ such that
$$\forall (x, t)\in B_1\times I_1, \ \forall s\in [t-3\varepsilon_1, t+3\varepsilon_1], \
\|Ê\dot\Gamma_{x,t}(s)\|Ê\leq K_0.$$
Because $L$ is $C^2$, there also exists a constant $K_1\geq K_0$ such the first and second derivative of $L$ are bounded by $K_1$ on the set
$$B_1\times B(0, K_0+\frac{2r_0}{\varepsilon_1})\times I_1.$$
\begin{remk}
Because $\dot \gamma_{y, \tau}(s)=\dot\gamma_{x,t}(s)+\frac{y-\Gamma_{x,t}(\tau)}{\tau-(t-3\varepsilon_1)}$, we have
\begin{equation}\label{Supder} \|\dot \gamma_{y, \tau}(s)\|\leq K_0+\frac{1}{\varepsilon_1}\| y-\Gamma_{x,t}(\tau)\|\leq K_0+\frac{2r_0}{\varepsilon_1}=K_2.\end{equation}
\end{remk}
In the following of the proof we denote by $K_3$, $K_4$, $K_5$ ... some positive constants depending
only
on the restriction of $L$ to the set $B_0\times B(0,K_2)\times I_0$. 
 Taylor-Lagrange inequality implies that for every $s\in [t-3\varepsilon_1, \tau]$, we have
 $$\big|L(\gamma_{y, \tau}(s), \dot\gamma_{y, \tau}(s), s)-L(\gamma_{x,t}(s), \dot\gamma_{x,t}(s), s)-\frac{\partial L}{\partial x}(\gamma_{x,t}(s), \dot\gamma_{x,t}(s), s)
 \frac{s-(t-3\varepsilon_1)}{\tau-(t-3\varepsilon_1)}(y-\gamma_{x,t}(\tau))$$
\begin{equation}\label{Efin}\hfill-\frac{\partial L}{\partial v}(\gamma_{x,t}(s), \dot\gamma_{x,t}(s), s)\frac{y-\gamma_{x,t}(\tau)}{\tau-(t-3\varepsilon_1)}\big| 
\leq \frac{K_3}{\varepsilon_1^2}\| y-\gamma_{x,t}(\tau)\|^2.
\end{equation} 
Using Euler-Lagrange equations, inequality  (\ref{Efin}) and an integration by parts, since
$\gamma_{x,t}=\Gamma_{x,t}\left|_{[t-3\epsilon_1,\tau]}\right.$, we get
the following inequality
$$\left| \alpha-\left[ \frac{\partial L}{\partial v}(\Gamma_{x,t}(s), \dot\Gamma_{x,t}(s), s)
  \frac{s-(t-3\varepsilon_1)}{\tau-(t-3\varepsilon_1)}(y-\Gamma_{x,t}(\tau))\right]_{s=t-3\varepsilon_1}^{s=t}\right| \leq \frac{3K_3}{\varepsilon_1}\| y-\Gamma_{x,t}(\tau)\|^2$$
i.e.
$$\left| \alpha- \frac{3\varepsilon_1}{\tau-(t-3\varepsilon_1)}\frac{\partial L}{\partial v}(\Gamma_{x,t}(t), \dot\Gamma_{x,t}(t),t)(y-\Gamma_{x,t}(\tau)) \right| \leq \frac{3K_3}{\varepsilon_1}\| y-\Gamma_{x,t}(\tau)\|^2.$$
We deduce from inequality (\ref{Supder}) that
$\|\Gamma_{x,t}(\tau)-\Gamma_{x,t}(t)\|\leq K_2|t-\tau|$ and then $\|y-\Gamma_{x,t}(\tau)\|=\|(y-x)+(x-\Gamma_{x,t}(\tau))\|\leq \|y-x\|+K_2|t-\tau|$. We note too that
$|\frac{3\varepsilon_1}{\tau-(t-3\varepsilon_1)}-1|\leq \frac{|t-\tau|}{\varepsilon_1}$ and so
$$\left| \alpha- d_q\ug (x, t)(y-\Gamma_{x,t}(\tau)) \right| \leq K_4(\|  y-x\|+|t-\tau|)^2.$$
Observe that Euler-Lagrange Equation implies that the $\ddot{\Gamma}_{x,t}$s are uniformly bounded by some constant. Hence 
$$\|y-\Gamma_{x,t}(\tau)-(y-x-\dot{\Gamma}_{x,t}(t)(\tau-t))\|\leq K_5(\tau -t)^2.$$
We deduce
\begin{equation}\label{Ineg2}\left| \alpha- d_q\ug (x, t)(y-x-\dot{\Gamma}_{x,t}(t)(\tau-t))\right|
  \leq K_6(\|  y-x\|+|t-\tau|)^2.\end{equation}
In a similar way, we obtain
\begin{equation}\label{Ineg3}\left|\beta-(\tau-t)L(x, \dot{\Gamma}_{x,t}(t), t)\right|\leq K_7(\|  y-x\|+|t-\tau|)^2.\end{equation}
 Equations (\ref{Ineg1}),  (\ref{Ineg2}) and (\ref{Ineg3}) imply that
$$ \ug(y, \tau)-\ug(x, t)-d_q\ug(x, t)(y-x)-(\tau-t)\frac{\partial \ug}{\partial t}(x, t)\leq  K_8(\|  y-x\|+|t-\tau|)^2.$$
Using $\psi_-$ instead of $\psi_+$, we obtain then 
$$| \ug(y, \tau)-\ug(x, t)-d_q\ug(x, t)(y-x)-(\tau-t)\frac{\partial \ug}{\partial t}(x, t)|\leq  K_9(\|  y-x\|+|t-\tau|)^2.$$
Using the criterion for a Lipschitz derivative, we conclude.

\subsection{Proof of the corollaries}
We prove the corollaries that were given in the introduction.\\

{\bf Proof or Corollary \ref{corauton}} With the hypothesis of the corollary, we obtain that $\cl$ is a graph. We then use Theorem 6.4.1 of \cite{Fa1}, which is a corollary of the convergence of the Lax-Oleinik semi-group in weak KAM theory, to conclude.

{\bf Proof or Corollary \ref{cormin}} We proved that the $\ch$ orbit of every point in $\cy$ is $(u, L,c)$-calibrated. This implies (see for example \cite{Be1}) that every orbit is minimizing.
\vspace{2mm}\\ \noindent
       {\bf Acknowledgements.} The authors thank sincerely the  referee for   many valuable suggestions and comments, who greatly improve the the
       quality of the paper.

%
%
%        Appendice : KAM faible non-autonome
%
%
%\section{Appendix : some results on nonautonomous weak KAM theory.} \label{nonautonomus-kam} In this section we state and prove some more technical results related to  nonautonomous weak KAM theory. In the following Lemma we are on the same  setting of Proposition (\ref{u-lipschitz-select}) with the same notations :   $(M,<\, ,\,>)$ is a closed manifold Riemannian manifold, $dist(\, ,\, )$ is the  geodesic distance, etc...  \begin{lemma} \label{local-lip-unif} Let $\ugoth : M\times(a,b)\rightarrow \R$ be a locally Lipschitz function.  Assuming the existence of two positive constants $A_1$ and $A_2$ such that  $$ \left\|\frac{\partial \ugoth}{\partial q}(q,t)\right\|_q^* \le A_1 \quad \text{et} \quad \left|\frac{\partial \ugoth}{\partial t}(q,t)\right| \le A_2 $$ for almost all $(q,t)\in M\times (a,b)$, then there exists  $r>0$ and $\delta>0$ such that for every  $(q,t)\in \overline{B}_{q_0}(r)\times [t_0-\delta,t_0+\delta]$ we have \begin{equation} \label{ugoth-loc-lip} |\ugoth(q,t)-\ugoth(q_0,t_0)|\le A_1 dist(q,q_0)+A_2 |t-t_0|, \end{equation} \end{lemma} \vspace{3mm}   \begin{center} {\LARGE IL FAUT FAIRE LA PREUVE DU LEMME} \end{center} \vspace{3mm} 

%\vfill\break
\bibliographystyle{amsplain}

\end{document}